\setlist[itemize]{noitemsep, topsep=0pt}
\setlist[enumerate]{noitemsep, topsep=0pt}
\newcommand{\seqnum}[1]{\href{http://oeis.org/#1}{\underline{#1}}}
\definecolor{navy}{rgb}{0.0,0.0,0.6}
\newcommand{\Gabe}[1]{\textcolor{red}{[Gabe: #1]}}
\newcommand{\red}[1]{\textcolor{red}{#1}}
\newcommand{\N}{\mathbb{N}}
\newcommand{\Z}{\mathbb{Z}}
\renewcommand{\P}{\mathbb{P}}
\newcommand{\R}{\mathbb{R}}
\renewcommand{\S}{\mathfrak{S}}
\newcommand{\CC}{\mathbb{C}}
\newcommand{\eqpd}{\, .}
\newcommand{\eqcom}{\, ,}
\DeclareMathOperator{\des}{des}
\DeclareMathOperator{\Des}{Des}
\DeclareMathOperator{\asc}{asc}
\DeclareMathOperator{\Asc}{Asc}
\DeclareMathOperator{\inv}{inv}
\DeclareMathOperator{\Inv}{Inv}
\DeclareMathOperator{\maj}{maj}
\DeclareMathOperator{\dtop}{dtop}
\DeclareMathOperator{\itop}{itop}
\DeclareMathOperator{\sinv}{sinv}
\DeclareMathOperator{\sdes}{sdes}
\DeclareMathOperator{\bdes}{bdes}
\DeclareMathOperator{\binv}{binv}
\DeclareMathOperator{\tie}{tie}
\DeclareMathOperator{\area}{area}
\DeclareMathOperator{\PF}{PF}
\DeclareMathOperator{\UPF}{UPF} 
  \DeclareMathOperator{\Comp}{Comp}
    \DeclareMathOperator{\WComp}{WComp}
\DeclareMathOperator{\pinv}{pinv}
\DeclareMathOperator{\Fub}{Fub}
\DeclareMathOperator{\ch}{ch}
\DeclareMathOperator{\con}{con}
\DeclareMathOperator{\ps}{ps}
\newcommand{\defterm}{\textbf} 
\DeclareMathOperator{\E}{\mathbb{E}}
\DeclareMathOperator{\F}{\mathcal{F}}
\DeclareMathOperator{\C}{\mathfrak{C}}
\DeclareMathOperator{\Exp}{Exp}
\newcommand{\qbinom}[2]{\genfrac{[}{]}{0pt}{}{#1}{#2}}
\newcommand{\lp}{\left(}
\newcommand{\rp}{\right)}
\newcommand{\lb}{\left[}
\newcommand{\rb}{\right]}
\newtheorem{theorem}{Theorem}[section]
\newtheorem{corollary}[theorem]{Corollary}
\newtheorem{proposition}[theorem]{Proposition}
\newtheorem{problem}[theorem]{Problem}
\newtheorem{lemma}[theorem]{Lemma}
\theoremstyle{definition}
\newtheorem{definition}[theorem]{Definition}
\newtheorem{remark}[theorem]{Remark}
 \newtheorem{example}[theorem]{Example}
\title{Inversions in parking functions} 
\author[Celano]{Kyle Celano}\address[Celano]{Department of Mathematics, Wake Forest University, NC}
\email{\url{celanok@wfu.edu}}
\author[Elder]{Jennifer Elder}
\address[Elder]{Department of Computer Science, Mathematics and Physics, Missouri Western State University, St. Joseph, MO 64507}
\email{\textcolor{blue}{\href{mailto:jelder8@missouriwestern.edu}{jelder8@missouriwestern.edu}}} 
\author[Hadaway]{Kimberly P. Hadaway}
\address[Hadaway]{Department of Mathematics, Iowa State University, Ames, IA 50010}
\email{\textcolor{blue}{\href{mailto:kph3@iastate.edu}{kph3@iastate.edu}}}
\author[Harris]{Pamela E. Harris}
\address[Harris]{Department of Mathematical Sciences, University of Wisconsin-Milwaukee, Milwaukee, WI 53211}
\email{\textcolor{blue}{\href{mailto:peharris@uwm.edu}{peharris@uwm.edu}}}
\author[Priestley]{Amanda Priestley}
\address[Priestley]{Department of Computer Science, The University of Texas at Austin, Austin, TX 78712}
\email{\textcolor{blue}{\href{mailto:amandapriestley@utexas.edu}{amandapriestley@utexas.edu}}} 
\author[Udell]{Gabe Udell}
\address[Udell]{Department of Mathematics, Cornell University, Ithaca, NY 14850}
\email{\textcolor{blue}{\href{mailto:gru5@cornell.edu}{gru5@cornell.edu}}}
\begin{document}

\begin{abstract}
    In this paper, we obtain a $q$-exponential generating function for inversions on parking functions via symmetric function theory and also through a direct bijection to rooted labeled forests. We then apply these techniques to unit interval parking functions to give analogous results. We conclude by introducing a  probabilistic approach through which we obtain formulas for the total number of inversions and several other statistics across all parking functions and other sets of words closed under rearrangement. 
\end{abstract}

\maketitle

\section{Introduction}
Throughout, let $\N=\{1,2,\dots\}$, and if $n\in\N$, then $[n]=\{1,2,\ldots,n\}$. 
Also, let $\S_n$ denote the set of permutations of $[n]$.
Konheim and Weiss \cite{Konheim1966} defined parking functions as follows: consider a one-way street with $n$ parking spots (labeled in increasing order) and $n$ cars, each of which has a preferred spot. 
As each car parks, it drives to its preferred spot and parks there if it is unoccupied. 
If that spot is occupied, then the car continues driving and parks in the next available spot, if any exists.  
We encode the information of the preferred spots as a positive integer-valued \defterm{preference list} $\alpha = (\alpha_1,\alpha_2,\dots,\alpha_n)\in [n]^n$, where for each $i\in[n]$, the positive integer $\alpha_i$ indicates the preferred spot of car $i$. 
We say the preference list $\alpha$ is a \defterm{parking function} if all cars are able to park using the aforementioned parking rule. 
The \defterm{outcome permutation} of $\alpha$, denoted $\pi(\alpha)$, is defined by setting $\pi(\alpha)(i)=j$ if car $j$ parks in spot $i$ for each $i\in [n]$. 
We let $\PF_n$ be the set of all parking functions of length $n$. 
Konheim and Weiss establish
that the number of parking functions of length $n$ is  
$(n+1)^{n-1}$, see \cite[Lemma 1]{Konheim1966}. 
A second proof of this count was given by 
Pollak by parking the cars on a circle with one new parking spot, see \cite{foata1974mappings}.

Introduced by Hadaway \cite{bib:HadawayUndergradThesis},  a \defterm{unit interval parking function} of length $n$ is a parking function $\alpha\in \PF_n$ such that $\pi(\alpha)^{-1}(i)-\alpha_i\leq 1$ for all $i\in[n]$. 
In other words, these are parking functions in which, for all $i\in[n]$, car $i$ parks either in its preference $a_i$ or in the spot after its preference $a_i+1$.
Let $\UPF_n$ denote the set of unit interval parking functions of length $n$. 
Hadaway \cite{bib:HadawayUndergradThesis} proves that $|\UPF_n|=\Fub_n$, which is the \textbf{$n^{th}$ Fubini number} and counts the number of ordered set partitions of $[n]$  \cite[\seqnum{A000670}]{OEIS}. 
Unit interval parking functions have been studied in connection to enumerating Boolean intervals of the weak Bruhat order of the symmetric group and in connection to the faces of the permutohedron \cite{unit_perm,elder2024parking}.

We are interested in the study of inversions of parking functions and unit interval parking functions. 
Recall that for a word $w\in \N^n$, an \defterm{inversion} is a pair $(i,j)$ of integers in $[n]$ such that $i<j$ and $w_i>w_j$.
We denote the set of inversions of a word $w$ by $\Inv(w)$ and let $\inv(w)=|\Inv(w)|$. 
Treating parking functions as words, for each $n\in\N$, we define the polynomials
\[\PF_n(q)=\sum_{\alpha\in \PF_n}q^{\inv(\alpha)}\quad \mbox{and}\quad\UPF_n(q)=\sum_{\alpha\in \UPF_n}q^{\inv(\alpha)}, \]
and set $\PF_0(q)=\UPF_0(q)=1$. 
The inversion generating function $\PF_n(q)$ appeared incidentally in \cite[Section 8]{NadTew23} in their study of remixed Eulerian numbers and as we explain in \Cref{sec:inversions-in-pfs}, it appears as a specialization of certain symmetric functions studied in \cite{haiman_conjectures_1994}. Beyond this, little  has been said about inversions in parking functions directly. 

Define
\[\Exp_q(z)=\sum_{n\geq 0}q^{\binom{n}{2}}\frac{z^n}{[n]_q!}\quad \mbox{and }\quad\exp_q(z)=\sum_{n\geq 0}\frac{z^n}{[n]_q!},\]
where $[n]_q=1+q+\cdots+q^{n-1}$ and $[n]_q!=[n]_q[n-1]_q\cdots[1]_q$.
With these definitions, using combinatorial techniques, we derive the following generating functions. 
\begin{theorem}\label{eq:pf-upf-inv-gen-functs} The $q$-exponential generating functions for $\PF_n(q)$ and $\UPF_n(q)$ are 
    \[\sum_{n\geq 0}\PF_n(q)\frac{z^{n+1}}{[n]_{q}!}=(z\Exp_{q}(-z))^{\langle -1\rangle }\quad \mbox{and }\quad\sum_{n\geq 0}\UPF_n(q)\frac{z^{n}}{[n]_{q}!}=\frac{1}{2-\exp_q(z)},\]
    where $F(z)^{\langle -1\rangle}$ denotes the compositional inverse of $F(z)$.
\end{theorem}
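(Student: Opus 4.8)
The plan is to reduce each generating function identity to a functional equation and then establish it through the $q$-analog of the exponential formula. The engine throughout is the product rule for $q$-exponential generating functions: if $A(z)=\sum_n a_n\frac{z^n}{[n]_q!}$ and $B(z)=\sum_n b_n\frac{z^n}{[n]_q!}$, then $A(z)B(z)=\sum_n\bigl(\sum_k\qbinom nk a_k b_{n-k}\bigr)\frac{z^n}{[n]_q!}$, where $\qbinom nk=\sum_{S}q^{\#\{(a,b)\suchthat a\in S,\ b\notin S,\ a>b\}}$ is the $q$-binomial summed over $k$-subsets $S\subseteq[n]$. Thus $\qbinom nk$ records exactly the inversions created when two label sets are merged, and the recurring principle is that the word statistic $\inv$ of a structure splits into inversions internal to its pieces plus cross-inversions between pieces, the latter producing $q$-binomial factors.

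For the parking-function identity, I would first invoke the standard $q$-exponential identity $\exp_q(z)\Exp_q(-z)=1$, so that $z\Exp_q(-z)=z/\exp_q(z)$; writing $T(z)=\sum_{n\ge 0}\PF_n(q)\frac{z^{n+1}}{[n]_q!}$, the claim $T=(z\Exp_q(-z))^{\langle-1\rangle}$ is then equivalent to the functional equation $T=z\exp_q(T)$. I would prove this equation through the promised bijection between $\PF_n$ and rooted labeled forests on $[n]$ (equivalently rooted labeled trees on $\{0,1,\dots,n\}$ rooted at $0$), arranged so that $\inv(\alpha)$ is carried to a tree-inversion statistic. Decomposing a rooted tree into its root together with the unordered family of subtrees hanging from it, this statistic splits into the inversions within the subtrees and the cross-inversions among their label sets; the $q$-exponential formula then yields $T=z\exp_q(T)$, the factor $z$ recording the root and $\exp_q$ assembling the unordered subtree family with the $q$-binomial shuffle weights. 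Extracting coefficients and checking $\PF_0(q)=\PF_1(q)=1$ and $\PF_2(q)=2+q$ against $z\exp_q(T)$ fixes the normalization.

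For the unit-interval identity, I would expand $\frac{1}{2-\exp_q(z)}=\sum_{k\ge 0}(\exp_q(z)-1)^k$. Since $\exp_q(z)-1$ is the $q$-exponential generating function for nonempty blocks, the $k$-th power enumerates ordered set partitions $(B_1,\dots,B_k)$ of $[n]$ weighted by $q^{c}$, where $c=\#\{(a,b)\suchthat a\in B_i,\ b\in B_j,\ i<j,\ a>b\}$ counts cross-block inversions; equivalently the coefficient of $z^n/[n]_q!$ is $\sum\qbinom{n}{|B_1|,\dots,|B_k|}$. It then suffices to exhibit the bijection $\UPF_n\leftrightarrow\{\text{ordered set partitions of }[n]\}$ underlying $|\UPF_n|=\Fub_n$ and to verify that it sends $\inv(\alpha)$ to this cross-block statistic. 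Equivalently, I would prove the recurrence $\UPF_n(q)=\sum_{k=0}^{n-1}\qbinom nk\UPF_k(q)$ for $n\ge 1$ — which is precisely the coefficient form of $U(z)\exp_q(z)=2U(z)-1$ for $U(z)=\sum_n\UPF_n(q)\frac{z^n}{[n]_q!}$ — by peeling off the first block $B_1$ and using that the cross-inversions between $B_1$ and the remaining elements depend only on the underlying set $B_1$, so that they contribute the factor $\qbinom{n}{|B_1|}$ while the standardized remainder contributes $\UPF_k(q)$.

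The main obstacle in both halves is the same: verifying that the relevant bijection is inversion-preserving, i.e.\ that the global word statistic $\inv$ decomposes along the recursive structure into internal inversions plus exactly the cross-inversions encoded by the $q$-binomial (respectively $q$-multinomial) shuffle factors. Making the tree-inversion statistic match $\inv(\alpha)$ on the nose — and in particular seeing why the subtree assembly is governed by $\exp_q$ rather than $\Exp_q$ — is the delicate point for parking functions, and the analogous cross-block bookkeeping is the delicate point for unit interval parking functions. Once the statistic-matching is secured, the functional equations and hence both generating functions follow formally. A separate, less combinatorial route to the parking-function identity runs through the symmetric-function specialization alluded to earlier, which I would use as an independent check on the equation $T=z\exp_q(T)$.
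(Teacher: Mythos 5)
Your proposal is correct in strategy, and for the parking-function half it is essentially the paper's own combinatorial proof: the paper likewise reduces the claim to the functional equation $T=z\exp_q(T)$ (in the form $\sum_{n\geq 0}\PF_n(q)z^n/[n]_q!=\exp_q(T)$) and proves the resulting coefficient identity by decomposing rooted labeled forests, with $\inv$ transported to the parental preorder inversion statistic $\pinv$ under Fran\c{c}on's bijection $\rho$. On the point you flag as delicate (why $\exp_q$ and how the unordered subtree family is assembled): the paper's resolution is that the $[k]_q!$ in $\exp_q$ is \emph{not} an ``unordering'' factor at all; after standard manipulations the identity to prove becomes $\PF_n(q)=\sum_{k}\sum_{c\in\WComp(n-k,k)}\qbinom{n}{k}_q\qbinom{n-k}{c_1,\dots,c_k}_q\prod_{i}\PF_{c_i}(q)$, where $\qbinom{n}{k}_q$ arises from choosing the \emph{set of roots} $R$ (every pair $i<j$ with $i\notin R$, $j\in R$ is automatically an inversion, since roots have the earliest possible parent position) and the $q$-multinomial from distributing the non-root labels among the trees; the subtrees are canonically ordered by their roots, so nothing is unordered. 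For the unit-interval half your route is a genuine, if small, improvement on the paper's elementary proof: the paper expands $\frac{1}{2-\exp_q(z)}=\sum_{k\geq 0}\exp_q(z)^k/2^{k+1}$, which produces weak compositions (empty blocks) and requires the auxiliary identity $\sum_{j\geq 0}\binom{m+j}{j}2^{-(m+j+1)}=1$ to repair the overcount, whereas your expansion $\sum_{k\geq 0}(\exp_q(z)-1)^k$ --- equivalently the recurrence $\UPF_n(q)=\sum_{k=0}^{n-1}\qbinom{n}{k}_q\UPF_k(q)$ obtained by peeling the first block --- yields strong compositions directly and needs no such identity. Both halves rest on exactly the two statistic-preserving bijections you correctly identify as the remaining work, and which the paper supplies: $\inv(\rho(F))=\pinv(F)$ for forests, and the block-structure bijection $\psi:\UPF_n\to\C_n$ with $\Inv(\psi(\alpha))=\Inv(\alpha)$, whose proof uses the fact that the entries of each block of a unit interval parking function occur in increasing order. (The paper also gives second, ``specialization'' proofs of both halves --- via $\ps$ applied to Haiman's symmetric function, and via Stanley's $(\inv,\asc)$ generating function at $t=2$ --- which correspond to the independent check you mention at the end.)
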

 \Cref{eq:pf-upf-inv-gen-functs} gives $q$-analogues of  classical results of Konheim and Weiss \cite{Konheim1966} and Cayley \cite{cayley1856}, respectively: 
\begin{equation}\label{eq:results2}
    \sum_{n\geq 0}|\PF_n|\frac{z^{n+1}}{n!}=(ze^{-z})^{\langle -1\rangle }\quad \mbox{and } \quad \sum_{n\geq 0}|\UPF_n|\frac{z^{n}}{n!}=\frac{1}{2-e^z},
\end{equation}
where the latter result follows from the bijection between unit interval parking functions and Cayley permutations (see \cite{bib:HadawayUndergradThesis}).
We also show that the equations in \eqref{eq:results2} follow from a more general theory of symmetric functions. The main result is as follows.
\begin{theorem}\label{eq:pf-upf-sym-gen-functs}There are $\S_n$-modules $\CC[\PF_n]$ and $\CC[\UPF_n]$ such that 
\begin{equation}\label{eq:frobenius_char}
    \sum_{n\geq 0}\ch\CC[\PF_n]z^{n+1}=(zE(-z))^{\langle -1\rangle }\quad \mbox{and } \quad \sum_{n\geq 0}\ch \CC[\UPF_n]z^n=\frac{1}{2-H(z)} \ ,
\end{equation}
where $\ch$ denotes the Frobenius characteristic map from $\S_n$-modules to symmetric functions of degree $n$.
Here,
$E(z)=\sum_{n\geq 0}e_nz^n$ and $H(z)=\sum_{n\geq 0}h_nz^n$
are generating functions for the elementary symmetric functions and the complete homogeneous symmetric functions, respectively.\end{theorem}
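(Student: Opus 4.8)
The plan is to realize both $\CC[\PF_n]$ and $\CC[\UPF_n]$ as permutation modules, to compute their Frobenius characteristics as explicit nonnegative sums of products of complete homogeneous symmetric functions, and then to package those sums into the two generating functions: on the $\UPF$ side by a one-line geometric-series identity, and on the $\PF$ side by a self-similar (first-passage) decomposition. First I would fix the module structures. Since $\PF_n$ is closed under rearranging entries, I take $\CC[\PF_n]$ to be the permutation module for the coordinate action of $\S_n$. For any permutation module the Frobenius characteristic is the sum over orbits of $h_\mu$, where $\mu$ records the block sizes of the (Young) stabilizer; the orbits here are indexed by non-decreasing parking functions $b$, with stabilizer $\S_{\lambda(b)}$ for $\lambda(b)$ the partition of value-multiplicities of $b$. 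Hence $\ch\CC[\PF_n]=\sum_{b}h_{\lambda(b)}$, summed over non-decreasing parking functions of length $n$. Because $\UPF_n$ is \emph{not} closed under rearrangement (for instance $(1,1,2)\in\UPF_3$ but its rearrangement $(2,1,1)\notin\UPF_3$), I instead transport the natural $\S_n$-action on ordered set partitions of $[n]$ through the bijection $\UPF_n\cong\{\text{ordered set partitions of }[n]\}$. Its orbits are indexed by compositions $\gamma$ of $n$ (the block-size sequences) with stabilizer $\S_\gamma$, so $\ch\CC[\UPF_n]=\sum_{\gamma}h_\gamma$, summed over compositions $\gamma=(\gamma_1,\dots,\gamma_k)$ of $n$ with $h_\gamma=h_{\gamma_1}\cdots h_{\gamma_k}$.

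The $\UPF$ identity is then immediate. Writing $\bar H(z)=H(z)-1=\sum_{m\ge 1}h_m z^m$ and summing over the number of parts of $\gamma$,
\[
\sum_{n\ge 0}\ch\CC[\UPF_n]z^n=\sum_{k\ge 0}\bar H(z)^k=\frac{1}{1-\bar H(z)}=\frac{1}{2-H(z)}.
\]

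For the $\PF$ identity I would set $G(z)=\sum_{n\ge 0}\ch\CC[\PF_n]z^{n+1}$ and prove the functional equation $G=zH(G)$. Granting this, solving gives $z=G/H(G)=G\,E(-G)$, since $H(w)E(-w)=1$, so $G$ is the compositional inverse of $w\mapsto wE(-w)$, which is the claim. To establish $G=zH(G)$, equivalently the recursion
\[
\ch\CC[\PF_n]=\sum_{m\ge 1}h_m\sum_{j_1+\cdots+j_m=n-m}\ \prod_{i=1}^{m}\ch\CC[\PF_{j_i}]\qquad(n\ge 1),
\]
I would encode each non-decreasing parking function $b$ by its content vector $(c_1,\dots,c_n)$, $c_j=\#\{i:b_i=j\}$, which satisfies $c_1+\cdots+c_j\ge j$ for all $j$ and $\sum_j c_j=n$. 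This is exactly a lattice path of length $n$ from $0$ to $0$ staying weakly positive, whose $j$-th step has height change $c_j-1\ge-1$, and the weight $h_{\lambda(b)}=\prod_j h_{c_j}$ contributes one factor $h_{c_j}$ per step. Conditioning on $m=c_1$ (a first step of height $m-1$ carrying weight $h_m$), the descent back to $0$ factors uniquely, via its last passages through heights $m-1,\dots,1$, into $m$ weakly-positive excursions separated by $m-1$ down-steps; each down-step has height change $-1$, hence corresponds to a value of multiplicity $0$ and weight $h_0=1$, and each excursion is the path of a non-decreasing parking function. Reading off the lengths $j_1,\dots,j_m$ (which sum to $n-m$, since we consumed the first step and $m-1$ separators) and multiplying weights yields the displayed recursion.

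The main obstacle is this last decomposition on the $\PF$ side: I must verify that the last-passage cut produces exactly $m$ subpaths that are themselves parking-function paths, that the removed separators carry trivial weight, and---crucially for matching $h$-monomials---that no single value's multiplicity is split across two pieces, so that $\prod_j h_{c_j}$ factors precisely as $h_m\prod_i h_{\lambda(\text{piece }i)}$. A secondary point requiring care is the asymmetry between the two constructions: one must justify the transported $\S_n$-action on $\CC[\UPF_n]$ and confirm, via an orbit count, that it yields the composition-indexed sum $\sum_\gamma h_\gamma$ rather than the partition-indexed sum one would obtain from a (nonexistent) rearrangement action.
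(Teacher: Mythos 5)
Your proposal is correct in its overall architecture and reaches the theorem through the same two functional equations the paper uses, but with different combinatorial engines. For the parking function side, the paper (\Cref{thm:lambet-w-symmetric}, building on \Cref{isomorphism-Sn-module}) proves the same recursion
\[
\ch\CC[\PF_n]=\sum_{m\ge 1}h_m\sum_{j_1+\cdots+j_m=n-m}\ \prod_{i=1}^{m}\ch\CC[\PF_{j_i}]
\]
by passing through the bijection with labeled rooted forests and decomposing an unlabeled rooted plane forest into its root vertices and the subforests hanging below them; you instead stay at the level of content vectors of weakly increasing parking functions and decompose the associated lattice path (steps $c_j-1\ge -1$, staying weakly positive). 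The two decompositions correspond under the standard forest--path bijection, but your route is more self-contained for this particular theorem, since the paper needs the forest machinery anyway for its results on the $\pinv$ statistic. On the unit interval side your argument is genuinely cleaner: summing $\bar H(z)^k=(H(z)-1)^k$ over the number of parts of a \emph{strong} composition gives $1/(1-\bar H(z))=1/(2-H(z))$ in one line, whereas the paper (\Cref{thm:UPF-sym-generating-function}) expands $1/(2-H(z))=\sum_{k}H(z)^k/2^{k+1}$ over \emph{weak} compositions and then needs the identity $\sum_{j\ge 0}\binom{\ell+j}{j}2^{-(\ell+j+1)}=1$ to collapse the weights. Your module structures match the paper's: the coordinate action on $\PF_n$, and on $\UPF_n$ the action transported from ordered set partitions, which agrees with the paper's block-swapping action under its bijection $\psi$.

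One step of your plan is stated incorrectly and would fail if executed literally: the cuts in the path decomposition must be made at \emph{first} passages, not last passages. After the initial step of height $m-1$, cutting at the first passage to each of the levels $m-2,m-3,\dots,0$ does produce $m$ weakly-positive excursions separated by $m-1$ unit down-steps, because with unit down-steps the path cannot dip below level $i$ before first visiting level $i-1$. Cutting at last passages does not: take the parking function $(1,1,3,3)$, with content $(2,0,2,0)$, path steps $(+1,-1,+1,-1)$, and $m=2$; the last passage through height $1$ occurs after the third step, so the first piece has content $(0,2)$, whose partial sums violate the parking condition --- it is not the content of any element of $\PF_2$, so its weight $h_0h_2$ cannot be matched to a term of $\ch\CC[\PF_2]$. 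With first passages the same example splits correctly into an empty excursion, a down-step, and the excursion with content $(2,0)$. Finally, the ``main obstacle'' you flag --- that $\prod_j h_{c_j}$ might not factor along the pieces --- is not actually an issue: the weight is a product over steps, the decomposition partitions the steps, and $h_0=1$ on the separators, so the factorization $h_m\prod_i h_{\lambda(\mathrm{piece}\ i)}$ is automatic once the (first-passage) decomposition is in place.
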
 
The Frobenius image in \eqref{eq:frobenius_char} for parking functions was computed algebraically by Haiman \cite{haiman_conjectures_1994}. In \Cref{sec:inversions-in-pfs}, we provide a new combinatorial proof using labeled rooted forests, which we construct in \Cref{sec:labeled rooted forests}. Then, using a modified version of stable principal specialization, \Cref{eq:pf-upf-sym-gen-functs} gives a second proof of \Cref{eq:pf-upf-inv-gen-functs}. For unit interval parking functions, we provide two ways of computing its Frobenius characteristic in \Cref{sec:inversions-in-upfs}: one by specializing a formula of Stanley and the other by more elementary arguments.

\begin{table}[h]
\[\arraycolsep=1.4pt\def\arraystretch{1.5}
\begin{array}{|c|c||c|c|c|}\hline
    &\S_n&\PF_n& \mbox{ Theorem } &\mbox{ OEIS/Sequence }\\\hline
        \inv&\frac{n!n(n-1)}{4}&\frac{n(n+1)^{n-2}}{2}\binom{n}{2}& \mbox{ \Cref{thm:total-number-of-inversions-in-parking-functions}} & \seqnum{A386011}^{\text{\textdagger}}
        \\\hline
    \des&\frac{n!(n-1)}{2}&\binom{n}{2}(n+1)^{n-2}& \text{\cite[Theorem 10]{bib:DescentsInPF}} &
    \seqnum{A053507}
    \\\hline
    \des_1&\frac{n!}{2}&\frac{n}{2}(n+1)^{n-2}& 
    \text{\cite[Theorem 10]{bib:DescentsInPF}\red{*}}
    & \seqnum{A386015}^{\text{\textdagger}}
    \\\hline
    \tie&0&(n-1)(n+1)^{n-2}& \mbox{ \Cref{cor:ties in parking functions}} &\seqnum{A071720}\\\hline
    \tie_1&0&(n+1)^{n-2}& \mbox{\Cref{prop:parktie}} &  \seqnum{A007830}\\\hline
    \sdes&(n-1)(n-1)!&(n-1)(n+1)^{n-2}&  \mbox{ \Cref{prop:sdes_sinv_bdes_binvPF}}  & \seqnum{A071720}\\\hline
    \sdes_1&(n-1)!&(n+1)^{n-2}& \mbox{ Equation} \eqref{eq:exp-of-1-sdes-pf}\red{*}  & \seqnum{A007830}\\\hline
    \sinv&\frac{n!(n-1)}{2}&\binom{n}{2}(n+1)^{n-2}&  \mbox{ \Cref{prop:sdes_sinv_bdes_binvPF}} & \seqnum{A053507}  \\\hline
    \bdes&\binom{n-1}{2}(n-1)!&\binom{n-1}{2}(n+1)^{n-2}&  \mbox{ \Cref{prop:sdes_sinv_bdes_binvPF}} & \seqnum{A386860}^{\text{\textdagger}}\\\hline
    \bdes_1&\frac{n-2}{2}(n-1)!&\frac{n-2}{2}(n+1)^{n-2}&  \mbox{ \Cref{prop:sdes_sinv_bdes_binvPF}}\red{*}& \seqnum{A387047}^{\text{\textdagger}} \\\hline
    \binv&\binom{n-1}{2}\frac{n!}{2}& \frac{n}{4}(n-1)(n-2)(n+1)^{n-2}&  \mbox{\Cref{prop:sdes_sinv_bdes_binvPF} } & \seqnum{A386861}^{\text{\textdagger}}\\\hline
\end{array}\]
   \caption{Totals of certain statistics over $\S_n$ and $\PF_n$. The references to theorems and OEIS entries given refer to the results for parking functions in particular. The analogous results for the symmetric group are classical (e.g.\cite{Stern_1838, Terquem1838}), except those involving sdes and those listed in the rows below it, which are given by \Cref{prop:sdes_sinv_bdes_binvSN}. The references marked with (\red{*}), are the result of applying \Cref{thm:k-transitive-sn-invariant-inv-and-sum} to the statistics indicated. OEIS entries that were added as a result of this project are marked with (\text{\textdagger}). 
   For any OEIS entry which is unmarked, except those for descents, our project contributes a new combinatorial perspective in terms of parking functions.}
    \label{tab:stats-on-sn-and-pf} 
\end{table}

The study of the inversion statistic on $\PF_n$ and $\S_n$ has an important commonality: both set of words are $\S_n$-invariant. 
Hence, we focus studying the average behavior of  statistics in general on $\S_n$-invariant sets of words. 
With this approach in mind, in \Cref{sec:expectations}, we show that for any  $\S_n$-invariant subset $W\subset \N^n$ of positive integers, we have a curious equality: 
\begin{equation}\label{eq:sum-inv-ws-intro}\sum_{w\in W}\inv(w)=\frac{n}{2}\sum_{w\in W}\des(w),\end{equation}
where $\des(w)$ is the number of \defterm{descents} of $w$ i.e., the number of positions $i\in [n-1]$ such that $w_i>w_{i+1}$.

The intuition behind \eqref{eq:sum-inv-ws-intro} involves noting that the total number of inversions can be written as
the sum over all $i<j$ of the number of words with $(i,j)$ as an inversion.
The key\footnote{We credit Eyob Tsegaye for inspiring this approach.} is to use the $\S_n$-action to move the positions $(i,j)$ to the positions $(1,2)$. Then, the probability that $(i,j)$ is an inversion is the same as the probability that $(1,2)$ is an inversion. 
Using a probabilistic perspective, one can utilize this observation to obtain \eqref{eq:sum-inv-ws-intro}. 
The idea of taking a statistic that is computed by summing over locations, computing the statistic by considering each location separately, and finally using $\S_n$-invariance of $W$ to move each location to the beginning, is a technique that applies more broadly.  We introduce the notion of a \defterm{$k$-transitive function} to describe the class of statistics that this argument applies to, and in \Cref{thm:k-transitive-sn-invariant-inv-and-sum} we give a vast generalization of \eqref{eq:sum-inv-ws-intro}. 
In \Cref{sec:applications-of-linearity-of-expectation}, we explore applications of these ideas, and among our results we establish the results in \Cref{tab:stats-on-sn-and-pf}. 
For the interested reader, Campion Loth, Levet, Liu, Stucky, Sundaram, and Yin \cite{LLLSSY23} consider a similar problem for statistics on permutations in particular conjugacy classes. 

\section{Labeled rooted forests}\label{sec:labeled rooted forests}
The main objective of this section is to describe a statistic on rooted forests, which by way of a bijection to parking functions, maps to inversions of parking functions. 
We use standard definitions from graph theory for trees, forests, rooted trees, and rooted forests, for more on those topics we recommend \cite{DiestelGraph}.
To begin, recall that parking functions are in bijection with combinatorial objects called \defterm{labeled rooted forests} \cite{Konheim1966}, where a rooted forest is made up of rooted trees in which every vertex is given a unique integer. 
We denote the set of labeled rooted forests on vertex set $[n]$ by $\F_n$. 
If $T$ is a rooted labeled tree and $v$ is a nonroot vertex of $T$, define the \defterm{parent} of $v$, denoted $p(v)$, to be first element on the path from $v$ to the root of $T$. 
We say that $v$ is a \defterm{child} of $u$ if $p(v)=u$.
For a labeled rooted tree $T$, let $r(T)$ denote its root. 
The \defterm{subtrees} of a labeled rooted tree $T$ are labeled rooted subtrees $T_1,T_2,\dots,T_k$ such that $r(T_i)$ is adjacent in $T$ to $r(T)$ for each $i\in [k]$, where we order the trees so that $r(T_1)<r(T_2)<\cdots<r(T_k)$. 
An unlabeled rooted tree is called a \textbf{rooted plane tree} provided the children of each internal vertex are given a total ordering. 
A forest of rooted planar trees is a \textbf{rooted plane forest} 
if the forests are given a total ordering.
Next, we introduce technical definitions used in our proofs.
\begin{definition}
Let $T$ be a labeled rooted tree with root $r$. 
The \defterm{preorder traversal permutation} $w(T)$ of $T$ is defined recursively by setting 
\[w(T)=\begin{cases}
    r(T)&\text{if $T$ is the single vertex $r(T)$}\\
    r(T)\cdot w(T_1)\cdot w(T_2)\cdots w(T_k)&\text{if $T$ has subtrees $T_1,T_2,\dots,T_k$},
\end{cases}\]
where $u\cdot v$ denotes concatenation of words $u$ and $v$. 

 For $F\in \F_n$, define the \defterm{preorder traversal permutation} of $F$ to be the permutation $w(F)$ on $\{0\}\cup [n]$ 
 defined as follows: Suppose $F$ has trees $T_1,T_2,\dots,T_k$ with roots $r_1<r_2<\cdots<r_k$. 
 Add a new vertex 0 to $F$ and connect the roots of $T_1,T_2,\dots,T_k$ to $0$ to make a rooted tree $T(F)$
 rooted at 0.  Then define $w_F=w(T(F))$.
\end{definition}

Fran\c{c}on \cite{francon_acyclic_1975} defines a bijection $\rho:\F_n\to \PF_n$ by
    \begin{align}
    \rho(F)=(w_F^{-1}(p(1)),w_F^{-1}(p(2)),\dots, w_F^{-1}(p(n))).\label{def:rho}
    \end{align}
 \Cref{fig:root-forest-bij-pf} provides an example of $\rho$, 
  where, for instance, the entries in positions $3,10,$ and $14$ of $\rho(F)$ are each $2$ because the nodes labeled $3,10,$ and $14$ in $F$ are the children of the node labeled $5$, which is in position $2$ of $w_F$. 
  
\begin{figure}
    \centering
    \begin{subfigure}[b]{0.4\textwidth}
    \centering
        \begin{tikzpicture}[yscale=-1, scale=.8]
        \node (0) at (0,0) {\textcolor{white}{0}};
        \node (1) at (2,3) {1};
        \node (2) at (-3.5,3) {2};
        \node (3) at (-3,2) {3};
        \node (4) at (-1.5,3) {4};
        \node (5) at (-1.5,1) {5};
        \node (6) at (-.5,3) {6};
        \node (7) at (0,1) {7};
        \node (8) at (1.5,1) {8};
        \node (9) at (-2.5,3) {9};
        \node (10) at (-2,2) {10};
        \node (11) at (0,2) {11};
        \node (12) at (1,2) {12};
        \node (13) at (2,2) {13};
        \node (14) at (-1,2) {14};
        \draw (5)--(3)--(2);
        \draw (3)--(9);
        \draw (5)--(10);
        \draw (5)--(14)--(4);
        \draw (14)--(6);
        \draw (7)--(11);
        \draw (8)--(13)--(1);
        \draw (8)--(12);
    \end{tikzpicture}
    \caption{Labeled rooted forest $F$}
    \label{fig:rooted-forest}
    \end{subfigure}
     \begin{subfigure}[b]{0.4\textwidth}
    \centering
        \begin{tikzpicture}[yscale=-1, scale=.8]
        \node (0) at (0,0) {1};
        \node (1) at (2,3) {15};
        \node (2) at (-3.5,3) {4};
        \node (3) at (-3,2) {3};
        \node (4) at (-1.5,3) {8};
        \node (5) at (-1.5,1) {2};
        \node (6) at (-.5,3) {9};
        \node (7) at (0,1) {10};
        \node (8) at (1.5,1) {12};
        \node (9) at (-2.5,3) {5};
        \node (10) at (-2,2) {6};
        \node (11) at (0,2) {11};
        \node (12) at (1,2) {13};
        \node (13) at (2,2) {14};
        \node (14) at (-1,2) {7};
        \draw (0)--(5)--(3)--(2);
        \draw (3)--(9);
        \draw (5)--(10);
        \draw (5)--(14)--(4);
        \draw (14)--(6);
        \draw (0)--(7)--(11);
        \draw (0)--(8)--(13)--(1);
        \draw (8)--(12);
    \end{tikzpicture}
    \caption{Preorder traversal of $T(F)$}
    \label{fig:rooted-forest-preorde-traversal}
    \end{subfigure}
  \caption[This is the caption; This is the second line]
    {\tabular[t]{@{}l@{}}$w(F)=(0,5,3,2,9,10,14,4,6,7,11,8,12,13,1)$\\ $\rho(F)=(14,3,2,7,1,7,1,1,3,2,10,12,12,2)$ \endtabular}\label{fig:root-forest-bij-pf}
\end{figure}
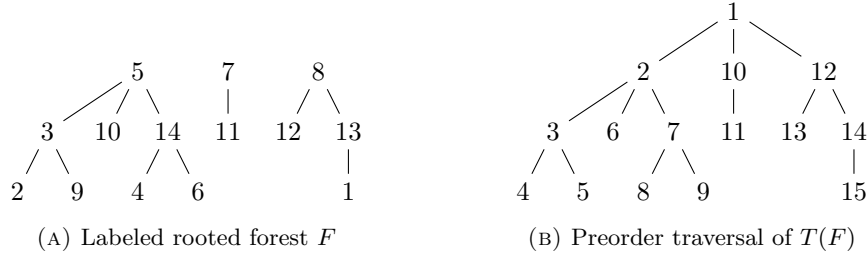

    \begin{definition}
         Suppose $F\in \F_n$. The pair $(i,j)$ of integers $i,j\in [n]$ is called a \defterm{parental preorder inversion} of $F$ provided $i<j$ and $w_F^{-1}(p(i))>w_F^{-1}(p(j))$. We denote the number of parental preorder inversions of $F$ by $\pinv(F)$.
    \end{definition}

    For a vertex, $i$, of a rooted labeled forest, $F$, let $p(i)$ denote the parent of $i$. Observe that $(i,j)$ is an inversion of $\rho(F)$ if and only if $i<j$ and $w_F^{-1}(p(i))>w_F^{-1}(p(j))$. 
    Hence, $\pinv(F)=\inv(\rho(F))$ and thus we have shown:
   \begin{proposition}\label{prop:parental-preorder-inversion-gf}
   For all $n\geq 1$, 
    $    \PF_n(q)=\displaystyle\sum_{F\in \F_n}q^{\pinv(F)}$.
   \end{proposition}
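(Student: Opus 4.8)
The plan is to reduce the identity to the two facts already in hand: that $\rho\colon\F_n\to\PF_n$ is a bijection (Françon), and that $\rho$ sends the parental preorder inversions of a forest to the inversions of the corresponding parking function. Given these, the proposition follows by a change of summation variable, so the real work lies in verifying the statistic identity $\pinv(F)=\inv(\rho(F))$.

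First I would unwind the definition of $\rho$ coordinate by coordinate. Writing $\rho(F)_i$ for the $i$-th entry of $\rho(F)$, the formula \eqref{def:rho} gives $\rho(F)_i=w_F^{-1}(p(i))$. Consequently, for $i<j$, the pair $(i,j)$ lies in $\Inv(\rho(F))$ if and only if $\rho(F)_i>\rho(F)_j$, i.e.\ $w_F^{-1}(p(i))>w_F^{-1}(p(j))$ — which is exactly the condition defining a parental preorder inversion of $F$. Hence $\Inv(\rho(F))$ and the set of parental preorder inversions of $F$ are literally the same set of pairs, so $\inv(\rho(F))=\pinv(F)$ for every $F\in\F_n$.

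Second, I would transport the sum across the bijection. Since $\rho$ is a bijection, as $F$ ranges over $\F_n$ the image $\rho(F)$ ranges over $\PF_n$ exactly once, so
\[
\sum_{F\in\F_n}q^{\pinv(F)}=\sum_{F\in\F_n}q^{\inv(\rho(F))}=\sum_{\alpha\in\PF_n}q^{\inv(\alpha)}=\PF_n(q).
\]

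I expect the only point requiring care is the coordinate matching in the first step: one must check that the index $i$ in the parking function $\rho(F)$ corresponds to the vertex $i$ of $F$ whose parent is recorded, so that the two inequalities align without any hidden relabeling. Once this bookkeeping is confirmed — and the observation preceding the statement already does so — the remainder is a routine reindexing, and there is no deeper obstacle.
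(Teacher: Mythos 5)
Your proposal is correct and matches the paper's own argument exactly: the paper likewise observes that the $i$-th entry of $\rho(F)$ is $w_F^{-1}(p(i))$, so that $\Inv(\rho(F))$ coincides with the set of parental preorder inversions of $F$, and then transports the sum across Fran\c{c}on's bijection $\rho\colon\F_n\to\PF_n$. No gaps; your extra care about the coordinate bookkeeping is precisely the observation the paper records just before stating the proposition.
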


The constant term of $\PF_n(q)$ is the Catalan number $C_n=\frac{1}{n+1}\binom{2n}{n}$ \cite[\seqnum{A000108}]{OEIS} because on one hand the constant term is counting the weakly increasing plane forests and on the other hand, the constant term is counting unlabeled plane forests (see \cite{stanley2015catalan}).
 
\begin{figure}
    \centering
    \begin{subfigure}[b]{0.4\textwidth}
    \centering
        \begin{tikzpicture}[yscale=-1,every node/.style={shape=circle,draw,fill=black,inner sep=1pt,outer sep=5pt}]
        \node (1) at (2,3) {};
        \node (2) at (-3.5,3) {};
        \node (3) at (-3,2) {};
        \node (4) at (-1.5,3) {};
        \node (5) at (-1.5,1) {};
        \node (6) at (-.5,3) {};
        \node (7) at (0,1) {};
        \node (8) at (1.5,1) {};
        \node (9) at (-2.5,3) {};
        \node (10) at (-2,2) {};
        \node (11) at (0,2) {};
        \node (12) at (1,2) {};
        \node (13) at (2,2) {};
        \node (14) at (-1,2) {};
        \draw (5)--(3)--(2);
        \draw (3)--(9);
        \draw (5)--(10);
        \draw (5)--(14)--(4);
        \draw (14)--(6);
        \draw (7)--(11);
        \draw (8)--(13)--(1);
        \draw (8)--(12);
    \end{tikzpicture}
    \caption{Unlabeled rooted plane forest $U$}
    \label{fig:unlabled-plane-rooted-forest}
    \end{subfigure}
    \begin{subfigure}[b]{0.4\textwidth}
    \centering
        \begin{tikzpicture}[yscale=-1]
        \node (1) at (2,3) {14};
        \node (2) at (-3.5,3) {7};
        \node (3) at (-3,2) {4};
        \node (4) at (-1.5,3) {9};
        \node (5) at (-1.5,1) {1};
        \node (6) at (-.5,3) {10};
        \node (7) at (0,1) {2};
        \node (8) at (1.5,1) {3};
        \node (9) at (-2.5,3) {8};
        \node (10) at (-2,2) {5};
        \node (11) at (0,2) {11};
        \node (12) at (1,2) {12};
        \node (13) at (2,2) {13};
        \node (14) at (-1,2) {6};
        \draw (5)--(3)--(2);
        \draw (3)--(9);
        \draw (5)--(10);
        \draw (5)--(14)--(4);
        \draw (14)--(6);
        \draw (7)--(11);
        \draw (8)--(13)--(1);
        \draw (8)--(12);
    \end{tikzpicture}
    \caption{$F\in \mathcal{F}_{14}$ with $\pinv(F)=0$}
    \label{fig:rooted-forest-no-pinv}
    \end{subfigure}
    \caption{}
    \label{fig:labeling-UPRF}
   
\end{figure}

\section{Inversions in parking functions}\label{sec:inversions-in-pfs}
In this section, we  investigate inversions for parking functions by combining the bijection $\rho:\F_n\to \PF_n$ with the classical parking function symmetric function of Haiman \cite{haiman_conjectures_1994}. 
We assume a basic understanding of symmetric functions and $\S_n$-representation theory e.g. \cite[Chapter 7]{stanley1999enumerative} and \cite{sagan2013symmetric}. 

We set the following notation for this and subsequent sections. A \textbf{weak composition} of $n$ with $k$ parts is a sequence $(c_1,c_2,\dots,c_k)$ of nonnegative integers satisfying $\sum_{i=1}^k c_i=n$. 
A \textbf{composition} is a weak composition with only positive parts. 
We let $\ell(c)$ be the number of parts of a weak composition $c$.
Let $\Comp(n,k)$ and $\WComp(n,k)$ to be the set of compositions and weak compositions, respectively, of $n$ with $k$ parts. 
Also, let $\Comp(n)=\bigcup_{k}\Comp(n,k)$ and likewise $\WComp(n)=\bigcup_{k}\WComp(n,k)$. 
Given a word $w\in \N^n$ with largest value $m$, the \textbf{content} of $w$, denoted by $\con(w)=(c_1,c_2,\ldots,c_m)\in\WComp(n)$ is defined by setting $c_i$ to be the number of times the value $i$ appears in $w$ for each $i\in[m]$.

Recall that the \defterm{complete homogeneous symmetric function} 
$h_c$ is defined by $h_c=h_{c_1}h_{c_2}\cdots$ where for all positive integers $k$, we have 
\[h_k\coloneqq \sum_{1\leq i_1\leq i_2\leq \cdots\leq i_k}x_{i_1}x_{i_2}\cdots x_{i_k},\]
and $h_0\coloneqq 1$. 
Similarly, the \defterm{elementary symmetric function} $e_c$ is defined by $e_c=e_{c_1}e_{c_2}\cdots$, where for all positive integers $k$, we have 
\[e_k\coloneqq \sum_{1\leq i_1< i_2<\cdots<i_k}x_{i_1}x_{i_2}\cdots x_{i_k},\]
and $e_0\coloneqq1$. 
Finally, the \defterm{power-sum symmetric functions} 
is defined by $p_c=p_{c_1}p_{c_2}\cdots $, where for all positive integers $k$, we have
\[p_k\coloneqq \sum_{i\geq 1}x_i^k,\]
and $p_0\coloneqq 1$.

If $X$ is a finite set, let $\CC[X]$ denote the vector space over $\CC$ with basis $X$. 
If a group $G$ acts on $X$, we abuse notation and let $\CC[X]$ also denote the standard $G$-module defined by the action of $G$ on the basis $X$ (thus extending by linearity to the entire vector space). 
For an $\S_n$-module $V$ with character $\chi$, the \defterm{Frobenius image} is 
\[\ch(V)=\frac{1}{n!}\sum_{\sigma\in \S_n}\chi(\sigma)p_{cyc(\sigma)},\]
where $cyc(\sigma)$ denotes the cycle type of $\sigma$. The map $\ch$ from the set of $\S_n$-modules to symmetric functions is called the \defterm{Frobenius characteristic map}. 

\subsection{On the \texorpdfstring{$\S_n$}{}-module of parking functions and rooted labeled forests}
The symmetric group $\S_n$ acts on $\PF_n$ via 
\[\sigma\cdot \alpha  =(\alpha_{\sigma^{-1}(1)},\alpha_{\sigma^{-1}(2)},\dots,\alpha_{\sigma^{-1}(n)}) \quad \text{for all $\sigma\in \S_n$, $\alpha\in \PF_n$,} \]
making the module $\CC[\PF_n]$. The set $\F_n$ can also be made into an $\S_n$-module $\CC[\F_n]$ as follows. 
For $F\in \F_n$ and an index $i\in [n-1]$, define $s_i(F)$ to be the rooted labeled forest obtained from $F$ by applying the simple transposition $(i,i+1)$ to the labels and swapping the labels of $i$ and $i+1$. Then define the $\S_n$-action by setting
\begin{equation}\label{def: sn action on forests}
    (i,i+1)\cdot F=\begin{cases}
       s_i(F)&\mbox{if }p(i)\neq p(i+1)\\
       F&\mbox{if }p(i)=p(i+1)
    \end{cases}
\end{equation}
for each $i\in [n-1]$ and $F\in \F_n$. 
We can show for all $i\in [n-1]$ that
\begin{enumerate}
     \item $ (i,i+1)\cdot ( (i,i+1)\cdot F)=F$,
     \item $ (i,i+1)\cdot ( (i+1,i+2)\cdot ( (i,i+1)\cdot F))= (i+1,i+2)\cdot ( (i,i+1)\cdot ( (i+1,i+2)\cdot F))$, and
     \item $(i,i+1)\cdot ((j,j+1)\cdot F)=(j,j+1)\cdot ((i,i+1)\cdot F)$ if $|j-i|>1$.
 \end{enumerate}
We establish the above items in the following  technical result.
\begin{lemma}
    \Cref{def: sn action on forests} defines an $\S_n$-action on $\F_n$.
\end{lemma}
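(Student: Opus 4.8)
The plan is to use the standard Coxeter presentation of $\S_n$: the group is generated by the simple transpositions $s_i=(i,i+1)$ for $i\in[n-1]$ subject only to the relations $s_i^2=1$, $s_is_{i+1}s_i=s_{i+1}s_is_{i+1}$, and $s_is_j=s_js_i$ for $|i-j|>1$. Consequently, to produce a genuine $\S_n$-action it suffices to check that the rule in \eqref{def: sn action on forests} makes each $s_i$ act as a bijection of $\F_n$ and that these bijections satisfy the three relations, which are precisely items (1), (2), and (3). Note that item (1) simultaneously shows that each $s_i$ is an involution, hence a bijection, so the whole verification reduces to establishing (1)--(3).

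The key observation I would record first is a structural reinterpretation of the defining condition. For a forest $F$, write $v_k$ for the vertex carrying label $k$. Since labels are distinct, $p(i)=p(i+1)$ holds if and only if $\operatorname{par}(v_i)=\operatorname{par}(v_{i+1})$, i.e.\ $v_i$ and $v_{i+1}$ are siblings (where all roots are regarded as sharing the virtual parent $0$ of $T(F)$). Crucially, applying $s_i$ only permutes labels and never alters the edge set, so the sibling relation on any fixed set of vertices is unchanged; what changes is which vertex carries which label. With this in hand, (1) is immediate: whether $v_i,v_{i+1}$ are siblings is unaffected by the relabeling $s_i$, so $(i,i+1)$ either fixes $F$ twice or transposes the two labels twice, and in both cases returns $F$. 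For (3), when $|i-j|>1$ the label sets $\{i,i+1\}$ and $\{j,j+1\}$ are disjoint, so the two operations relabel disjoint vertex pairs and each reads a sibling condition untouched by the other; hence they commute.

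The hard part will be the braid relation (2), and I would treat it as the main obstacle. Here I would restrict attention to the three vertices $a=v_i$, $b=v_{i+1}$, $c=v_{i+2}$, since $s_i$ and $s_{i+1}$ move only the labels $i,i+1,i+2$ and leave every other vertex fixed. Because the underlying forest is frozen throughout, the only relevant invariant is the partition of $\{a,b,c\}$ into sibling classes (equivalence classes under ``same parent''), of which there are exactly five: all three together, all three separate, and the three ways of grouping a single pair. The remaining work is bookkeeping: for each partition, I would track the labeling of $a,b,c$ through the three factors on each side of $s_is_{i+1}s_i=s_{i+1}s_is_{i+1}$, using that a factor acts as an honest transposition of labels exactly when the two currently-relevant vertices are non-siblings and acts as the identity otherwise. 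The subtlety that makes this delicate is that the sibling test is applied to whichever vertices currently carry the two labels being swapped, and this changes at each step, so one must recompute the condition after every factor rather than reading it off $F$ once. The all-together and all-separate cases are immediate (every factor is trivial, respectively every factor is an honest transposition, so both sides reduce to the classical braid identity on three elements), and the three single-pair cases are handled symmetrically, each yielding the same final labeling on both sides. Verifying these five cases completes the proof.
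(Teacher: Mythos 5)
Your proof is correct, and its overall strategy---reduce to the Coxeter presentation of $\S_n$ and verify the involution, braid, and commutation relations by case analysis on parental configurations---is the same as the paper's. But your treatment of the braid relation is genuinely sharper, and in fact more careful than the paper's own proof. The paper splits the braid relation into four cases according to the truth values of $p(i)=p(i+1)$ and $p(i+1)=p(i+2)$; its final case, written ``$p(i)\neq p(i+1)\neq p(i+2)$'', silently conflates two of your five sibling-class partitions, namely ``all three parents distinct'' and ``$p(i)=p(i+2)\neq p(i+1)$''. In the computation for that case, the paper asserts $(i+1,i+2)\cdot s_i(F)=s_{i+1}s_i(F)$; this is valid only when the vertices carrying labels $i+1$ and $i+2$ in the intermediate forest $s_i(F)$ are non-siblings, i.e.\ only when $p_F(i)\neq p_F(i+2)$---exactly the subtlety you flag, that the sibling test must be re-evaluated after every factor rather than read off $F$ once. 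When $p(i)=p(i+2)\neq p(i+1)$, the correct computation shows that both sides of the braid relation collapse back to $F$ (each side performs one genuine swap, then a trivial step, then undoes the swap), rather than equaling the relabeling $s_is_{i+1}s_i(F)$, which is in general a different forest. The relation therefore still holds, and your five-case decomposition by set partitions of the three relevant vertices into sibling classes is precisely the bookkeeping that closes this small gap; what the paper's coarser four-case version buys in exchange is brevity, since its computations do convey the essential mechanism.
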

\begin{proof}
    This follows from checking that all relations of the generators $(i,i+1)$ of $\S_n$ are satisfied by the action; specifically:
    \begin{enumerate}
    \item $ (i,i+1)\cdot ( (i,i+1)\cdot F)=F$,
    \item $ (i,i+1)\cdot ( (i+1,i+2)\cdot ( (i,i+1)\cdot F))= (i+1,i+2)\cdot ( (i,i+1)\cdot ( (i+1,i+2)\cdot F))$, and
    \item $(i,i+1)\cdot ((j,j+1)\cdot F)=(j,j+1)\cdot ((i,i+1)\cdot F)$ if $|j-i|>1$.
\end{enumerate}
We verify each:
\begin{enumerate}
    \item Since $s_i(s_i(F))=F$, this is true regardless of the relation of $p(i)$ and $p(i+1)$.
    \item 
\begin{enumerate}
    \item $p(i)=p(i+1)=p(i+2)$: This is clear since everything is fixed.
    \item $p(i)=p(i+1)\neq p(i+2)$: 
    \begin{align*}  
   (i,i+1)\cdot ( (i+1,i+2)\cdot ( (i,i+1)\cdot F))&=(i,i+1)\cdot ( (i+1,i+2) \cdot F)\\
   &=(i,i+1)\cdot ( s_{i+1}(F))\\
   &=s_is_{i+1}(F),\\
   \intertext{and}
    (i+1,i+2)\cdot ( (i,i+1)\cdot ( (i+1,i+2)\cdot F)&=(i+1,i+2)\cdot ( (i,i+1)\cdot s_{i+1}(F))\\
    &=(i+1,i+2)\cdot s_i s_{i+1}(F))\\
    &=s_is_{i+1}(F).\end{align*}
    \item  $p(i)\neq p(i+1)= p(i+2)$: This is similar to the previous case.
    \item $p(i)\neq p(i+1)\neq p(i+2)$:
    \begin{align*}(i,i+1)\cdot ( (i+1,i+2)\cdot ( (i,i+1)\cdot F))    &=(i,i+1)\cdot ( (i+1,i+2) \cdot s_i(F))\\
    &=(i,i+1)\cdot s_{i+1} s_i(F))\\
    &=s_i s_{i+1} s_i(F)),\\
    \intertext{and}
        (i+1,i+2)\cdot ( (i,i+1)\cdot ( (i+1,i+2)\cdot F))&=(i+1,i+2)\cdot ( (i,i+1)\cdot s_{i+1}(F)\\
        &=(i+1,i+2)\cdot (  s_is_{i+1}(F)\\
        &=s_{i+1} s_is_{i+1}(F).
    \end{align*}
    Now, $s_{i}s_{i+1}s_i(F)=s_{i+1}s_is_{i+1}(F)$ because \[(i,i+1)(i+1,i+2)(i,i+1)=(i+1,i+2)(i,i+1)(i+1,i+2).\]
\end{enumerate}
\item This is straightforward to compute in all 4 cases. \qedhere
\end{enumerate}
\end{proof}

Hence, this defines an $\S_n$-action on $\F_n$.  The map $\rho$, defined in \Cref{def:rho}, then extends to a map of vector spaces $\rho:\CC[\F_n]\to \CC[\PF_n]$ via linearity.

    Recall that the \defterm{area} of  $\alpha=(\alpha_1,\alpha_2,\ldots,\alpha_n)\in \PF_n$ is the quantity
    \[\area(\alpha)=\binom{n+1}{2}-\sum_{i=1}^n\alpha_i.\]
    Under the standard bijection of parking functions to labeled Dyck paths (see \cite{YanPFs}, page 54), the quantity $\area(\alpha)$ can be interpreted as the number of full squares below the Dyck path and above the main diagonal; see \Cref{fig:area_example_dyck_path}.
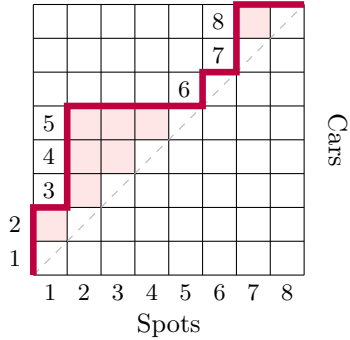
\begin{figure}[htb]\centering
\begin{tikzpicture}[scale=0.45]
\fill[color=red!10!white]  (0, 1) rectangle (1, 2);
\fill[color=red!10!white]  (1, 2) rectangle (2, 5);
\fill[color=red!10!white]  (2, 3) rectangle (3, 5);
\fill[color=red!10!white]  (3, 4) rectangle (4, 5);
\fill[color=red!10!white]  (6, 7) rectangle (7, 8);

\foreach \x in {0,...,8}
    \draw (\x, 0)--(\x,8) (0,\x)--(8,\x);

\draw[dashed,color=white!70!black] (0, 0)--(8, 8);

\node[] (x-axis-label) at (4, -1.5) {Spots};
\node[rotate=-90] (y-axis-label) at (9, 4) {Cars};

\foreach \x in {1,...,8} {
    \node at ({\x - .5}, -.5) {\small$\x$};
    }

    \draw[line width=2.5pt,color=purple] (0, 0)
    -- node[midway,left,color=black] {\small $1$} (0,1)
    --node[midway,left,color=black] {\small $2$} (0, 2)--(1, 2)
    --node[midway,left,color=black] {\small $3$} (1, 3)
    --node[midway,left,color=black] {\small $4$} (1, 4)
    -- node[midway,left,color=black] {\small $5$} (1, 5)--(5, 5)
    --node[midway,left,color=black] {\small $6$} (5, 6)--(6, 6)
    --node[midway,left,color=black] {\small $7$} (6, 7)
    --node[midway,left,color=black] {\small $8$} (6, 8)
    --(8, 8);
    \end{tikzpicture}

\caption{The parking function $(1,1,2,2,2,6,7,7)$ as a labeled Dyck path with the area shaded.}
\label{fig:area_example_dyck_path}
\end{figure}

    Then, if $\PF^{(i)}_{n}$ is the set of parking functions with area $i$, $\CC[\PF_n]$ has the structure of a graded module
    \[\CC[\PF_n]=\bigoplus_{i\geq 0}\CC[\PF^{(i)}_{n}] \eqpd \]
    Define the \defterm{area} of a rooted labeled forest $F\in \F_n$ as the quantity
    \[\area(F)=\binom{n+1}{2}-\sum_{i=1}^n w^{-1}_F(p(i)),\]
 and define the set
   \[\F_n^{(i)}=\{F\in \F_n\mid \area(F)=i\}.\]
With these definitions at hand, we begin by establishing that $\CC[\F_n]$ is a graded module.

\begin{proposition}\label{isomorphism-Sn-module}
    The $\S_n$-module $\CC[\F_n]$ is a graded module
    \[\CC[\F_n]=\bigoplus_{i\geq 0}\CC[\F_{n}^{(i)}].
    \]
    Moreover, the map $\rho:\CC[\F_n]\to \CC[\PF_n]$ is an isomorphism of graded $\S_n$-modules.
\end{proposition}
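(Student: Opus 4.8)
The plan is to establish both assertions at once by proving that the Françon bijection $\rho$ is $\S_n$-equivariant and area-preserving; the rest then follows formally. Since $\rho\colon\F_n\to\PF_n$ is already a bijection, its linear extension automatically sends a basis to a basis and so is a linear isomorphism of the permutation modules $\CC[\F_n]$ and $\CC[\PF_n]$. Thus the only substantive content is that $\rho$ intertwines the two $\S_n$-actions and respects the two area gradings.

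First I would check equivariance on the generators, i.e. $\rho((i,i+1)\cdot F)=(i,i+1)\cdot\rho(F)$ for each $i\in[n-1]$; as the simple transpositions generate $\S_n$, this suffices. When $p(i)=p(i+1)$ both sides are unchanged: the left because $(i,i+1)\cdot F=F$ by \Cref{def: sn action on forests}, and the right because positions $i$ and $i+1$ of $\rho(F)$ both equal $w_F^{-1}(p(i))$, so transposing them does nothing. The remaining case is $p(i)\neq p(i+1)$, where $(i,i+1)\cdot F=s_i(F)$.

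The key structural observation, which is also the main obstacle, is to determine how relabeling by $\tau\coloneqq(i,i+1)$ affects the preorder traversal. I claim that when $p(i)\neq p(i+1)$ the \emph{order} in which $T(F)$ is traversed, viewed as a sequence of nodes, is unchanged by the swap, so that only the labels move. The traversal order is governed entirely by the sibling orderings, and siblings are ordered by root label; relabeling a single node from $i$ to $i+1$ (or conversely) cannot alter the sorted order of a sibling set unless both $i$ and $i+1$ belong to that set, i.e. unless $i$ and $i+1$ share a parent, where we use the convention $p(\text{root})=0$ coming from the auxiliary vertex $0$ in $T(F)$. The hypothesis $p(i)\neq p(i+1)$ excludes this. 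Hence $w_{s_i(F)}=\tau\circ w_F$ as maps from positions to labels, so $w_{s_i(F)}^{-1}=w_F^{-1}\circ\tau$. Tracking the parent function through the relabeling yields $p'(j)=\tau(p(j))$ for $j\neq i,i+1$, together with $p'(i)=\tau(p(i+1))$ and $p'(i+1)=\tau(p(i))$, where $p'$ denotes the parent function of $s_i(F)$. Substituting into $\rho(s_i(F))_j=w_{s_i(F)}^{-1}(p'(j))=w_F^{-1}(\tau(p'(j)))$ and using $\tau^2=\mathrm{id}$ collapses every entry, giving $\rho(s_i(F))_j=\rho(F)_j$ for $j\neq i,i+1$ and $\rho(s_i(F))_i=\rho(F)_{i+1}$, $\rho(s_i(F))_{i+1}=\rho(F)_i$. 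This is exactly $(i,i+1)\cdot\rho(F)$, so $\rho$ is equivariant.

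Finally, for the grading I would observe that $\area(F)=\area(\rho(F))$ holds by definition, since $\rho(F)_i=w_F^{-1}(p(i))$ makes the two defining sums coincide. The action on $\PF_n$ permutes coordinates and hence preserves $\sum_i\alpha_i$ and therefore $\area$; by the equivariance just proved, the action on $\F_n$ preserves $\area$ as well, so each $\F_n^{(i)}$ is $\S_n$-stable and $\CC[\F_n]=\bigoplus_{i\ge 0}\CC[\F_n^{(i)}]$ is a decomposition of graded $\S_n$-modules. Because $\rho$ preserves area it carries $\F_n^{(i)}$ bijectively onto $\PF_n^{(i)}$, so the equivariant isomorphism $\rho$ restricts to an isomorphism in each degree, proving that $\rho$ is an isomorphism of graded $\S_n$-modules.
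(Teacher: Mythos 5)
Your proof is correct and takes essentially the same route as the paper's: verify $\S_n$-equivariance of $\rho$ on the adjacent transpositions via the two cases $p(i)=p(i+1)$ and $p(i)\neq p(i+1)$, then transfer area-invariance from the coordinate-permuting action on $\PF_n$ through the equivariant, area-preserving map $\rho$ to conclude that each $\F_n^{(i)}$ is stable and the isomorphism is graded. The one place you go beyond the paper is the case $p(i)\neq p(i+1)$, where your observation that swapping the non-sibling labels $i,i+1$ leaves the preorder traversal order of the underlying nodes unchanged (so $w_{s_i(F)}=\tau\circ w_F$ and hence $\rho(s_i(F))=(i,i+1)\cdot\rho(F)$) supplies a careful justification of exactly the step the paper asserts tersely; the paper's own line there, ``$(i,i+1)\cdot\rho(F)=\rho(s_i(F))=\rho(F)$,'' even contains an apparent typo, so your expanded argument is a genuine improvement in rigor rather than a departure in method.
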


\begin{proof}
    Suppose $\rho(F)=\alpha\in \PF_n$ for some $F\in \F_n$, where $\rho$ is as in \Cref{def:rho}. Let $i\in [n-1]$. If $p(i)= p(i+1)$, then $w_F^{-1}(p(i))=w_F^{-1}(p(i+1))$, so $(i,i+1)\cdot \rho(F)=\rho(F)$. 
    If $p(i)\neq p(i+1)$, then $w_F^{-1}(p(i))\neq w_F^{-1}(p(i+1))$, so $(i,i+1)\cdot \rho(F)=\rho(s_i(F))=\rho(F)$. 
    Thus, for all $\sigma\in \S_n$, we have that $\rho(\sigma\cdot F)=\sigma\cdot \rho(F)$. Hence, $\rho$ is an $\S_n$-module isomorphism.

    By definition of $\rho$ and the definitions of area on forests and parking functions, we have that $\area(F)=\area(\rho(F))$ for any $F\in \F_n$. 
    Since $\area(\alpha)=\area(\sigma\cdot \alpha)$ for any $\alpha\in \PF_n$ and for any $\sigma\in\S_n$, it follows that $\area(F)=\area(\rho(F))=\area(\sigma\cdot \rho(F))=\area(\rho(\sigma\cdot F))=\area(\sigma\cdot F)$. 
    Hence, $\CC[\F_n]$ is a graded $\S_n$-module with grading given by area, and $\rho$ is a graded module isomorphism.
\end{proof}

   \begin{remark}This parental preorder inversion statistic is not the usual statistic on $\F_n$ that is related to the area on $\PF_n$. An \defterm{inversion} of $F\in \F_n$ is a pair $(i,j)$ with $1\leq i<j\leq n$ such that $j$ is an ancestor of $i$. Let $\inv(F)$ denote the number of inversions of $F$. 
    Kreweras \cite{kreweras1980famille} showed that $\inv$ on $\F_n$ is \textit{equidistributed} with $\area$ on $\PF_n$, meaning
    \[\sum_{\alpha\in \PF_n}t^{\area(\alpha)}=\sum_{F\in \F_n}t^{\inv(F)}.\]Hence, \Cref{isomorphism-Sn-module} shows that $\inv$ on $\F_n$ is equidistributed with $\area$ on $\F_n$ i.e.,
       \begin{equation}
        \sum_{F\in \F_n}t^{\area(F)}=\sum_{F\in \F_n}t^{\inv(F)}.
        \end{equation}
        To prove this result bijectively, one could compose $\rho$ with the bijection of Kreweras. It would be of interest to have a direct bijection and we leave this an open problem.
    \end{remark}

Henceforth, we use $\mathbf{x}$ to denote the infinite sequence of variables $x_1,x_2,\dots$. 
Let $\PF_n(\mathbf{x})$ be the Frobenius image of $\CC[\PF_n]$, or equivalently (by \Cref{isomorphism-Sn-module}) of $\CC[\mathcal{F}_n]$. 
This is called the \defterm{parking function symmetric function} and was first introduced by Haiman \cite{haiman_conjectures_1994}. 
Define the \defterm{parental content} of $F\in \F_n$ to be the vector $\con(F)=(c_1,c_2,\dots, c_{n})$, where $c_i$ is the number of children of $w_F(i)$. 
It is straightforward to verify that $\con(F)=\con(\rho(F))$. 
Let $\F_n^\uparrow$ be the set of rooted labeled forests with no parental preorder inversions.
Then, we have 
\begin{equation}\label{eq:PF-sym-as-rlf}
    \PF_n(\mathbf{x})=\sum_{\alpha\in \PF_n^\uparrow}h_{\con(\alpha)}=\sum_{F\in \F_n^\uparrow}h_{\con(F)}.
\end{equation}
Haiman provides a generating function formula for $\PF_n(\mathbf{x})$. 
To state it, begin by recalling the following generating functions:

\begin{align}
\label{def: H(z)}
H(z)&=\sum_{n\geq 0}h_nz^n=\prod_{i\geq 1}\frac{1}{1-x_iz}
\qquad \mbox{and} \qquad E(z)=\sum_{n\geq 0}e_nz^n=\prod_{i\geq 1}(1+x_iz) \eqpd 
\end{align}
Note that $H(z)E(-z)=1$. Next, if $G(z)$ is a formal power series in $z$ (with coefficients in some commutative ring) with no constant term,  let $P(z)^{\langle -1\rangle}$ denote the \defterm{compositional inverse} of $G$, i.e., a formal power series $Q(z)$ such that $Q(P(z))=z$. 

Haiman's 1994 paper on diagonal coinvariant rings includes the conjecture of the existence of some module that was isomorphic to the parking function module \cite[p. 28]{haiman_conjectures_1994}. Stanley later proved that the equality $\PF(\mathbf{x},z)=(zE(-z))^{\langle -1\rangle }$ can be derived directly from Haiman's work using Lagrange inversion \cite[Proposition 2.2]{stanley1997parking}. We provide a combinatorial proof of of this equality 
via the map $\rho$, which we defined in \eqref{def:rho}.

 \begin{theorem}[{\cite[Proposition 2.2]{stanley1997parking}}]\label{thm:lambet-w-symmetric}  We have
    $\PF(\mathbf{x},z)\coloneqq \sum_{n\geq 0}\PF_n(\mathbf{x})z^{n+1}=(zE(-z))^{\langle -1\rangle }$.
\end{theorem}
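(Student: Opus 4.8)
The plan is to derive from \eqref{eq:PF-sym-as-rlf} a functional equation for the generating function $R(z):=\PF(\mathbf{x},z)=\sum_{n\ge 0}\PF_n(\mathbf{x})z^{n+1}$ and then invert it. Concretely, I will show that
\[
R(z)=z\,H\big(R(z)\big),
\]
and then use the identity $H(w)E(-w)=1$ to rewrite this as $R(z)\,E(-R(z))=z$, which says precisely that $R$ is the compositional inverse of $wE(-w)$. Since both $R(z)$ and $zE(-z)$ equal $z+O(z^2)$, this compositional inverse is well defined and unique, giving $R(z)=(zE(-z))^{\langle-1\rangle}$, as claimed.

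To obtain the functional equation I would first reinterpret the right-hand side of \eqref{eq:PF-sym-as-rlf} structurally. The weight $h_{\con(F)}$ depends only on the sequence of children-counts recorded in preorder, not on the actual labels of $F$. The key structural fact is that forgetting labels is a bijection between $\F_n^\uparrow$ and rooted plane forests on $n$ vertices: the preorder of a rooted plane forest is determined by its plane structure, so for a fixed plane forest the requirement that $F$ have no parental preorder inversion (that $i\mapsto w_F^{-1}(p(i))$ be weakly increasing), together with the convention that siblings are ordered by label, forces a unique labeling — sort the non-root vertices by the preorder position of their parent, breaking ties by sibling order, and label them $1,\dots,n$. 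Granting this, and adjoining the artificial root $0$ so that each $F\in\F_n^\uparrow$ becomes a rooted plane tree $T(F)$ on $n+1$ vertices, \eqref{eq:PF-sym-as-rlf} becomes
\[
R(z)=\sum_{T}z^{|T|}\prod_{v\in T}h_{c(v)},
\]
where $T$ ranges over all nonempty rooted plane trees, $|T|$ is the number of vertices, and $c(v)$ is the number of children of $v$; the $n=0$ summand is the single-vertex tree, contributing $z\,h_0=z$.

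Next I decompose a rooted plane tree $T$ at its root. If the root has $k\ge 0$ children, then $T$ consists of the root together with an ordered $k$-tuple of rooted plane subtrees $T_1,\dots,T_k$. This gives $z^{|T|}=z\prod_i z^{|T_i|}$, and since the root contributes exactly the factor $h_k$ we have $\prod_{v\in T}h_{c(v)}=h_k\prod_i\prod_{v\in T_i}h_{c(v)}$. Because the structural weight $h_{c(\cdot)}$ imposes no constraint relating labels across distinct subtrees, the sum over all ordered tuples factors as a product, yielding
\[
R(z)=z\sum_{k\ge 0}h_k\,R(z)^k=z\,H\big(R(z)\big).
\]
Combining this with the inversion step described in the first paragraph completes the proof.

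The step I expect to be the main obstacle is the structural dictionary in the second paragraph: verifying that forgetting labels really is a bijection $\F_n^\uparrow\to\{\text{rooted plane forests}\}$, equivalently that each rooted plane forest admits exactly one labeling with no parental preorder inversion. Everything else — the recursive decomposition at the root and the algebraic inversion via $H(w)E(-w)=1$ — is routine once this correspondence, and the fact that $h_{\con(F)}$ is label-independent, are in hand.
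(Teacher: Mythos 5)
Your proposal is correct and is essentially the paper's own proof: both arguments rest on the unique-labeling bijection between $\F_n^\uparrow$ and unlabeled rooted plane forests, use the root decomposition to obtain the functional equation $\PF(\mathbf{x},z)=zH(\PF(\mathbf{x},z))$, and then invoke $H(w)E(-w)=1$ to convert it into $\PF(\mathbf{x},z)E(-\PF(\mathbf{x},z))=z$. The only differences are presentational: you decompose plane trees (with the artificial root adjoined) directly at the generating-function level, whereas the paper decomposes plane forests and verifies the equivalent coefficient identity \eqref{eq:tree-recurrsion}.
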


\begin{proof}
By definition, the claim is equivalent to showing that 
\[\PF(\mathbf{x},z)E(-\PF(\mathbf{x},z))=z.\]

    Since $H(z)E(-z)=1$, the claim is equivalent to showing that 
    \begin{equation}\label{eq:symmetric-function-recurrsion}\sum_{n\geq 0}\PF_n(\mathbf{x})z^{n}=H(\PF(\mathbf{x},z)) \eqpd
    \end{equation}
    The right-hand side of \eqref{eq:symmetric-function-recurrsion} becomes
    \begin{align*}
        H(\PF(\mathbf{x},z))&=\sum_{k\geq 0} h_k \PF(\mathbf{x},z)^k\\    &=\sum_{k\geq 0} h_k\lp \sum_{n\geq 0}\PF_n(\mathbf{x})z^{n+1}\rp^k\\
        &=\sum_{n\geq 0}z^n \sum_{k=0}^n h_k \sum_{c\in \Comp(n,k)}\prod_{i=1}^k \PF_{c_{i-1}}(\mathbf{x})\\
        &=\sum_{n\geq 0}z^n \sum_{k=0}^n h_k \sum_{c\in \WComp(n-k,k)}\prod_{i=1}^k \PF_{c_{i}}(\mathbf{x}),
    \end{align*}
    where $\Comp(n,k)$ is the set of compositions of $n$ with $k$ parts and $\WComp(n,k)$ is the set of weak compositions of $n$ with $k$ parts.  Thus, comparing the coefficients of $z^n$ in \Cref{eq:symmetric-function-recurrsion}, the statement we now need to prove is 
    \begin{equation}\label{eq:tree-recurrsion}
        \PF_n(\mathbf{x})=\sum_{k=0}^n h_k \sum_{c\in \WComp(n-k,k)}\prod_{i=1}^k \PF_{c_{i}}(\mathbf{x}).
    \end{equation}

    To prove this, we appeal to the fact that elements of  $\mathcal{F}^{\uparrow}_n$ can be obtained uniquely by labeling unlabeled plane forests on $n$ vertices, as noted after \Cref{prop:parental-preorder-inversion-gf} (see also \Cref{fig:unlabled-plane-rooted-forest}). Let $\mathcal{U}_n$ be the set of unlabeled plane forest on $n$ vertices. We define $\con(U)=\con(F)$ if $F\in \mathcal{F}^{\uparrow}_n$ is the unique way to label $U$ to obtain a rooted labeled forest with no preorder traversals. Then, we have
    \begin{equation}\label{eq:PF-to-UF}\PF_n(\mathbf{x})=\sum_{U\in \mathcal{U}_n} h_{\con(U)}.\end{equation}

    We can build an element $U\in \mathcal{U}_n$ as follows:
    \begin{itemize}
        \item Select an integer $k$ and a weak composition $c\in \WComp(n-k,k)$.
        \item For each $i\in [k]$, pick some element $U_i\in \mathcal{U}_{c_i}$, and form a tree by attaching the roots of $U_i$ to a new root vertex. 
        The forest $U$ is the union of these trees.
    \end{itemize} 
    Let  $F=(F_1,\dots,F_k)\in \mathcal{F}_n^{\uparrow}$ be the unique labeling of $U=(U_1,\dots, U_k)$ that has no parental preorder inversions. We can compute the parental content $c=(c_1,\dots,c_n)$
    of $F$ as follows: 
    \begin{enumerate}
        \item Having $k$ roots means you have $c_1=k$.
        \item  For each $i\in [k]$ and $j\in F_i$, $F_i$ contributes $\con(F_i)_j$ to $c_j$. Since the labels on the vertices of $F_1,F_2,\dots,F_k$ form a partition of $[n]$, we have that $c_j=\con(F_i)_j$. 
    \end{enumerate}
    In other words, thinking of $\con(F), \con(F_1),\con(F_2),\dots,\con(F_k)$ as vectors of  length $n$, we have
        \[\con(F)=(k,\underbrace{0,0,\dots,0}_{n-1})+\con(F_1)+\con(F_2)+\cdots + \con(F_k),\]
        which implies that 
        \begin{equation}h_{\con(F)}=h_k h_{\con(F_1)}h_{\con(F_2)}\cdots h_{\con(F_k)}.\end{equation}
        Therefore, 
        \begin{equation}\label{eq:PF-to-UF-recur}\PF_n(\mathbf{x})=\sum_{F\in \F^{\uparrow}_n} h_{\con(F)}=\sum_{k=0}^n h_{k}\sum_{(U_1,\dots,U_k)}\prod_{i=1}^k {h}_{\con(U_i)},\end{equation}
        where the inner sum in the rightmost expression is over all $k$-tuples of unlabelled rooted plane forests on a total $n-k$ vertices. 
        To create such a $k$-tuple, we can first specify a weak composition $c\in \WComp(n-k,k)$ and then choose $U_i\in \mathcal{U}_{c_i}$ for each $i\in [k].$ 
        We then have 
        \begin{equation}\label{eq:UF-recurrsion}\sum_{(U_1,\dots,U_k)}\prod_{i=1}^k {h}_{\con(U_i)}=\sum_{c\in \WComp(n-k,k)}\prod_{i=1}^k\sum_{U_i\in \mathcal{U}_{c_i}}{h}_{\con(U_i)}
   =\sum_{c\in \WComp(n-k,k)}\prod_{i=1}^k \PF_{c_{i}}(\mathbf{x}).\end{equation}
        Then \Cref{eq:tree-recurrsion}  follows from combining Equations \eqref{eq:PF-to-UF}, \eqref{eq:PF-to-UF-recur}, and \eqref{eq:UF-recurrsion}.
\end{proof}

\subsection{Generating function formula for inversions on parking functions}
The next ingredient in understanding inversions for parking functions is a general statement about words. 
For this, recall that the \defterm{stable principal specialization} of a symmetric function $f$ is defined to be 
\[
\ps(f)=f(1,q,q^2,\dots)\eqpd
\]
 In other words, we set $x_i$ to $q^{i-1}$ for all $i\in \N$.

Next, for all nonnegative integers $n$, define 
\[(q;q)_n\coloneqq (1-q)(1-q^2)\cdots(1-q^n)=(1-q)^n[n]_q! \eqpd\]
 A set of length $n$ words $W\subseteq \N^n$ is \defterm{$\S_n$-invariant} if for all $w\in W$ and $\sigma\in \S_n$, 
\[\sigma\cdot w\coloneqq w_{\sigma^{-1}(1)}w_{\sigma^{-1}(2)}\cdots w_{\sigma^{-1}(n)}\in W.\]
In other words, the $\S_n$-action $\sigma\cdot w$ defined immediately above makes $\CC[W]$ an $\S_n$-module. There is a standard connection between stable principal specialization and inversion enumerators for words.
\begin{proposition}\label{lem:character-of-words-inversions}
    Let $W\subseteq \N^n$ be an $\S_n$-invariant set of words.
    Let $F(\mathbf{x})$ be the Frobenius image of the $\S_n$-module $\CC[W]$. Then, 
    \[(q;q)_n\ps(F(\mathbf{x}))=\sum_{w\in W}q^{\inv(w)}.\]
\end{proposition}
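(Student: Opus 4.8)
The plan is to decompose the module $\CC[W]$ into its $\S_n$-orbits and recognize each orbit as a permutation module whose Frobenius image is a complete homogeneous symmetric function indexed by its content. Since the $\S_n$-action permutes positions, two words lie in the same orbit precisely when they are rearrangements of one another, i.e., share the same content; thus $W$ partitions into rearrangement classes $O$, and because the Frobenius characteristic is additive over direct sums, $F(\mathbf{x})=\sum_O \ch\CC[O]$ with the sum running over the orbits $O$ of $W$. For a fixed orbit $O$ with content $c=(c_1,\dots,c_m)$, the stabilizer of any word $w\in O$ under the position-permuting action consists of the $\sigma$ that only interchange positions holding equal values, which is the Young subgroup $\S_{c_1}\times\cdots\times\S_{c_m}$. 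Hence $\CC[O]\cong\CC[\S_n/(\S_{c_1}\times\cdots\times\S_{c_m})]$ as $\S_n$-modules, and by the standard computation of the Frobenius image of such a coset module, $\ch\CC[O]=h_c$. This gives $F(\mathbf{x})=\sum_O h_{\con(O)}$.

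Next I would compute the stable principal specialization of $h_c$. Specializing $H(z)=\prod_{i\geq 1}(1-x_iz)^{-1}$ at $x_i=q^{i-1}$ yields $\prod_{j\geq 0}(1-q^jz)^{-1}$, and the $q$-binomial theorem identifies the coefficient of $z^k$ as $\ps(h_k)=1/(q;q)_k$. Multiplicativity of $\ps$ on the product $h_c=h_{c_1}h_{c_2}\cdots$ then gives $\ps(h_c)=\prod_i 1/(q;q)_{c_i}$, so that
\[
(q;q)_n\,\ps(F(\mathbf{x}))=\sum_O \frac{(q;q)_n}{\prod_i (q;q)_{c_i}}.
\]

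The final step is to recognize the summand $(q;q)_n/\prod_i(q;q)_{c_i}$ as the $q$-multinomial coefficient $\qbinom{n}{c_1,\dots,c_m}_q$ and to invoke MacMahon's classical theorem that this $q$-multinomial coefficient equals the inversion generating function $\sum_{w\in O}q^{\inv(w)}$ over the rearrangement class $O$ of content $c$. Summing over all orbits then collapses the double sum to $\sum_{w\in W}q^{\inv(w)}$, as desired.

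I expect the substance to lie entirely in the two standard facts invoked: that the Frobenius image of the orbit module is $h_c$, and MacMahon's identity for the $q$-multinomial coefficient. The main obstacle, such as it is, is bookkeeping to ensure the content conventions match across the three appearances of $c$ — in the Young-subgroup computation, in the specialization, and in MacMahon's statement — and confirming that values not appearing in a word (so $c_i=0$) contribute trivially, since $h_0=1$ and $(q;q)_0=1$.
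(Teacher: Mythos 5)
Your proposal is correct and follows essentially the same route as the paper's proof: decompose $W$ into $\S_n$-orbits, identify the Frobenius image of each orbit module as $h_c$, use the specialization identity $(q;q)_n\ps(h_c)=\qbinom{n}{c}_q$, and conclude with MacMahon's theorem on the inversion generating function of a rearrangement class. The only difference is that you derive the two standard ingredients (the Young-subgroup stabilizer argument and the evaluation $\ps(h_k)=1/(q;q)_k$ via the $q$-binomial theorem) rather than citing them, which is a matter of exposition, not substance.
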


\begin{proof}
It is known \cite[Equation 9.4]{shareshian_chromatic_2016}, \cite[Lemma 5.2]{GESSEL1993189} that for any composition $\alpha$,
\[(q;q)_n\ps(h_\alpha)=\qbinom{n}{\alpha}_q.\]
Recall that if $w\in \N^n$ has content $\alpha$ 
and $\S_n\cdot w$ denotes the $\S_n$-orbit, then MacMahon \cite{MacMahon} showed that
    \begin{equation*}\sum_{u\in \S_n\cdot w}q^{\inv(u)}=\qbinom{n}{\alpha}_q.\end{equation*}
    Since the Frobenious image of $\CC[\S_n\cdot w]$ is $h_{\alpha}$, the claim follows.
\end{proof}

\begin{corollary}\label{cor:sym-to-q-exp-gen-funs}
    Let $(W_n)_{n\geq 1}$ denote a family of words such that $W_n$ is $\S_n$-invariant for each $n$. Also for each $n$, let $F_n(q)=\sum_{w\in W_n}q^{\inv(w)}$ be the inversion generating function for $W_n$ and let $F_n(\mathbf{x})$ denote the Frobenius image of $W_n$. Then, we have
    \[\sum_{n\geq 1}\ps(F_n(\mathbf{x}))z^n(1-q)^n=\sum_{n\geq 1}F_n(q)\frac{z^n}{[n]_q!} \eqpd
    \]
\end{corollary}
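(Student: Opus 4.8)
The plan is to observe that this corollary is simply the coefficient-wise repackaging of \Cref{lem:character-of-words-inversions}, summed against the formal variable $z$. So the entire argument reduces to applying that proposition to each member $W_n$ of the family and then performing a bookkeeping manipulation with the $q$-factorials.

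Concretely, I would first fix an arbitrary $n \geq 1$ and apply \Cref{lem:character-of-words-inversions} to the $\S_n$-invariant set $W_n$, whose Frobenius image is $F_n(\mathbf{x})$. This yields the identity $(q;q)_n\,\ps(F_n(\mathbf{x})) = \sum_{w\in W_n}q^{\inv(w)} = F_n(q)$. Next I would invoke the factorization $(q;q)_n = (1-q)^n[n]_q!$, recorded just before \Cref{lem:character-of-words-inversions}, to rewrite the left-hand side and obtain $(1-q)^n[n]_q!\,\ps(F_n(\mathbf{x})) = F_n(q)$. Dividing both sides by $[n]_q!$ (which is a unit in the relevant ring of formal power series in $q$, since its constant term is $1$) gives the per-degree identity
\[
\ps(F_n(\mathbf{x}))\,(1-q)^n = \frac{F_n(q)}{[n]_q!}.
\]

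Finally, I would multiply both sides by $z^n$ and sum over all $n \geq 1$, which produces exactly
\[
\sum_{n\geq 1}\ps(F_n(\mathbf{x}))\,z^n(1-q)^n = \sum_{n\geq 1}F_n(q)\frac{z^n}{[n]_q!},
\]
as claimed. The interchange here is harmless because the equality holds term by term in $n$, so no analytic convergence issue arises; both sides are equal as formal power series in $z$ with coefficients in the field of rational functions (or formal power series) in $q$.

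There is no genuine obstacle in this argument, as it is a direct consequence of \Cref{lem:character-of-words-inversions}. The only point requiring minor care is to confirm that dividing by $[n]_q!$ is legitimate in the formal setting, which is immediate from its invertibility, and to state clearly that the summation over $n$ is performed in the ring of formal power series in $z$ so that no convergence questions intervene.
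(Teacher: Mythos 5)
Your proof is correct and takes essentially the same route as the paper: both arguments apply \Cref{lem:character-of-words-inversions} degree by degree, invoke the factorization $(q;q)_n=(1-q)^n[n]_q!$ to convert $(q;q)_n$ into $(1-q)^n[n]_q!$, and then sum the resulting per-degree identity $\ps(F_n(\mathbf{x}))(1-q)^n z^n=F_n(q)\frac{z^n}{[n]_q!}$ formally over $n$. The only cosmetic difference is that the paper's one-line proof re-expands $F_n(\mathbf{x})$ as $\sum_{w\in W^{\uparrow}}h_{\con(w)}$ before specializing, whereas you cite the proposition's conclusion directly; the content is identical.
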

 In other words, first applying stable principal specialization and then substituting $z\mapsto z(1-q)$ turns the generating function for the Frobenius image into the $q$-exponential generating function for the inversion enumerators.
\begin{proof} 
For $n\geq 1$, we have 
    \[\ps(F_n(\mathbf{x}))(z(1-q))^n =z^n (1-q)^n \sum_{w\in W^{\uparrow}}\ps(h_{\con(w)})=z^n(1-q)^n \frac{F_n(q)}{(q;q)_n}=F_n(q)\frac{z^n}{[n]_q!}, 
    \]
where $W^\uparrow$ denotes the set of weakly increasing words $W$.
\end{proof}
From this, we obtain a generating function for the inversion enumerator for parking functions. Recall two $q$-exponential generating functions (see; \cite[Equations 1.85, 1.87]{stanley2012enumerative}): 
\[
\exp_q(z)=\sum_{n\geq 0}\frac{z^n}{[n]_q!}=\prod_{i\geq 1}\frac{1}{1-z(1-q)q^i}\quad\text{and}\quad \Exp_q(z)=\sum_{n\geq 0}q^{\binom{n}{2}}\frac{z^n}{[n]_q!}=\prod_{i\geq 1}1+z(1-q)q^i\eqpd
\]
One can see that $\exp_q(z)\Exp_q(-z)=1$ using the product formula.

\begin{corollary} \label{thm:PF-is-q-LambertW-Inverse} We have 
\[\sum_{n\geq 0}\PF_n(q)\frac{z^{n+1}}{[n]_{q}!}=(z\Exp_{q}(-z))^{\langle -1\rangle }.\]
\end{corollary}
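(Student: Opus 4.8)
The plan is to transfer the symmetric-function identity of \Cref{thm:lambet-w-symmetric} to the $q$-world by applying the stable principal specialization $\ps$ and then rescaling the variable $z$. Recall from the proof of \Cref{thm:lambet-w-symmetric} that $\PF(\mathbf{x},z)=(zE(-z))^{\langle -1\rangle}$ is equivalent to the functional equation $\PF(\mathbf{x},z)\,E(-\PF(\mathbf{x},z))=z$. Since $\ps$ is a ring homomorphism on symmetric functions, applied coefficientwise to power series in the formal variable $z$, I would apply it to both sides to get $\ps(\PF(\mathbf{x},z))\cdot\ps(E)\big(-\ps(\PF(\mathbf{x},z))\big)=z$, i.e. $\ps(\PF(\mathbf{x},z))=\big(z\,\ps(E)(-z)\big)^{\langle -1\rangle}$. (Equivalently, $\ps$ commutes with compositional inverse in $z$, since the inverse is determined by polynomial relations among the coefficients.) This reduces the problem to an identity of ordinary $q$-power series.

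The second step is to identify $\ps(E(-z))$ explicitly. Using $E(z)=\prod_{i\geq 1}(1+x_iz)$ and $x_i\mapsto q^{i-1}$ gives $\ps(E(-z))=\prod_{i\geq 0}(1-q^{i}z)$, which by the product formula for $\Exp_q$ is precisely $\Exp_q\big(-z/(1-q)\big)$. Writing $a=1-q$ for brevity, the series $P(z):=z\,\ps(E)(-z)=z\,\Exp_q(-z/a)$ and $R(z):=z\,\Exp_q(-z)$ therefore satisfy the scaling relation $P(az)=a\,R(z)$, equivalently $P(z)=a\,R(z/a)$.

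The crux, and the step demanding the most care, is to propagate this $(1-q)$ rescaling through the compositional inverse while matching the $[n]_q!$ normalization. From $P(z)=a\,R(z/a)$ one solves $P(z)=y$ to obtain $P^{\langle -1\rangle}(y)=a\,R^{\langle -1\rangle}(y/a)$, so that $\tfrac1a P^{\langle -1\rangle}(az)=R^{\langle -1\rangle}(z)$. On the other hand, the coefficient identity established inside the proof of \Cref{cor:sym-to-q-exp-gen-funs}, namely $\ps(\PF_n(\mathbf{x}))\,(1-q)^n=\PF_n(q)/[n]_q!$, yields $\sum_{n\geq 0}\PF_n(q)\frac{z^{n+1}}{[n]_q!}=\tfrac1a\sum_{n\geq 0}\ps(\PF_n(\mathbf{x}))(az)^{n+1}=\tfrac1a\,\ps(\PF(\mathbf{x},az))=\tfrac1a\,P^{\langle -1\rangle}(az)$. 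Combining the two displays gives $\sum_{n\geq 0}\PF_n(q)\frac{z^{n+1}}{[n]_q!}=R^{\langle -1\rangle}(z)=(z\Exp_q(-z))^{\langle -1\rangle}$, as claimed. The only genuine subtlety is the bookkeeping of the extra factor of $z$ coming from the $z^{n+1}$ indexing together with the powers of $(1-q)$: these must cancel cleanly, and verifying that cancellation is the heart of the argument, while the remaining manipulations are formal.
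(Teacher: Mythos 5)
Your proposal is correct and follows essentially the same route as the paper's first proof (``Proof 1: Specialization''): both specialize the symmetric-function identity of \Cref{thm:lambet-w-symmetric} via $\ps$ together with the rescaling $z\mapsto z(1-q)$, using the key fact $\ps(E(z(1-q)))=\Exp_q(z)$ (which you state in the equivalent form $\ps(E(-z))=\Exp_q(-z/(1-q))$) and the coefficient identity underlying \Cref{cor:sym-to-q-exp-gen-funs}. The only difference is that you spell out explicitly the bookkeeping---commuting $\ps$ with compositional inversion and pushing the factor $1-q$ through the inverse---which the paper compresses into the citation of \Cref{cor:sym-to-q-exp-gen-funs}.
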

We provide two proofs.
\begin{proof}[Proof 1: Specialization]
    From \cite[Proposition 7.8.3]{stanley1999enumerative} we have
    \[\ps(E(z(1-q)))=\sum_{n\geq 0} \ps(e_n) z^n(1-q)^n=\sum_{n\geq 0}\frac{q^{\binom{n}{2}}}{(q;q)_n}z^n (1-q)^n=\sum_{n\geq 0}q^{\binom{n}{2}}\frac{z^n}{[n]_q!}= \Exp_q(z) \eqpd
    \]
    Thus, the claim follows from \Cref{thm:lambet-w-symmetric} and \Cref{cor:sym-to-q-exp-gen-funs}.
\end{proof}
The proof of \Cref{thm:PF-is-q-LambertW-Inverse} implies that $\ps(H(z(1-q))=\exp_q(z)$.
Next, we provide a more elementary explanation of \Cref{thm:PF-is-q-LambertW-Inverse} that does not rely on symmetric functions through rooted labeled forests.

     \begin{proof}[Proof 2: Combinatorial]
     By definition, $\PF(z,q)$ is the compositional inverse of $z\Exp_q(-z)$ if and only if 
    \begin{align}
        \PF(z,q)\Exp_q(-\PF(z,q))=z,
    \end{align}
    which is equivalent to 
    \begin{equation}\sum_{n\geq 0}\PF_n(q)\frac{z^{n}}{[n]_q!}=\exp_q(\PF(z,q)) \eqpd \end{equation}
    The right-hand side can be expanded directly as
    \begin{align*}
        \exp_q(\PF(z,q))&=\sum_{k\geq 0}\frac{1}{[k]_q!}\lp \PF(z,q)\rp^k\\
        &=\sum_{k\geq 0}\frac{1}{[k]_q!}\lp \sum_{n\geq 0}\PF_n(q)\frac{z^{n+1}}{[n]_q!}\rp^k\\
        &=\sum_{k\geq 0}\frac{z^k}{[k]_q!} \sum_{n\geq 0}z^n\sum_{c=(c_1,\ldots,c_k)\in \Comp(n,k)}\prod_{i=1}^k\frac{\PF_{c_i-1}(q)}{[c_i]_q!}\\
         &= \sum_{n\geq 0}z^n\sum_{k=0}^n\frac{1}{[k]_q!}\sum_{c=(c_1,\ldots,c_k)\in \WComp(n-k,k)}\prod_{i=1}^k\frac{\PF_{c_i}(q)}{[c_i]_q!}\\
         &= \sum_{n\geq 0}\frac{z^n}{[n]_q!}\sum_{k=0}^n\frac{[n]_q}{[n-k]_q![k]_q!}\sum_{c=(c_1,\ldots,c_k)\in \WComp(n-k,k)}\frac{[n-k]_q!}{[c_1]_q![c_2]_q!\cdots [c_k]_q! }\prod_{i=1}^k\PF_{c_i}(q)\\
         &=\sum_{n\geq 0} \frac{z^n}{[n]_q!}\sum_{k=0}^n \sum_{c=(c_1,\ldots,c_k)\in \WComp(n-k,k)}\qbinom{n}{k}_q\qbinom{n-k}{c_1,c_2,\dots, c_k}_q  \prod_{i=1}^k \PF_{c_i}(q),
    \end{align*} 
    following standard algebraic manipulations. 
    
    Hence, we want to show that
    \begin{equation}\label{eq:PF-inversion-decomposition}\PF_n(q)=\sum_{k=0}^n\sum_{c=(c_1,\ldots,c_k)\in \WComp(n-k,k)}\qbinom{n}{k}_q\qbinom{n-k}{c_1,c_2,\dots, c_k}_q  \prod_{i=1}^k \PF_{c_i}(q).\end{equation}
      Now we can prove the equivalent statement of \eqref{eq:PF-inversion-decomposition} for rooted labeled forests. We can construct a rooted labeled forest $F$ as follows:
    \begin{enumerate}
        \item Pick a subset $R=\{r_1<r_2<\cdots<r_k\}$ of  $k$ numbers of $[n]$ to be the roots of your trees. 
        \item Pick a weak composition $c=(c_1,\ldots,c_k)$ of $n-k$ of size $k$, and select an ordered set partition $B_1,B_2,\dots,B_{k}$ of $[n]\setminus R$ with $|B_i|=c_i$ for each $i\in[k]$.
        \item Pick elements $F_{i}\in \mathcal{F}_{B_i}$ (i.e., a rooted forest labeled with $B_i$) and attach their roots to $r_i$ for each $i\in [k]$.
    \end{enumerate}
    
    To count the number of parental preorder inversions of $F$, we can see that parental preorder inversions $(i,j)$ come from one of three places:
    \begin{itemize}
        \item $i\in B_k$ and $j\in R$ (the parent of the elements of $R$ always comes first in the preorder transversal),
        \item $i\in B_k$, $j\in B_\ell$, and $k>\ell$ ($B_k$ is traversed after $B_\ell$ if $k>\ell$), or
        \item $i,j$ are both in some $B_\ell$.
    \end{itemize}
    These correspond to the 3 factors in the outer sum of \Cref{eq:PF-inversion-decomposition}.
    Thus, we have shown that
    \begin{equation}\label{eq:RLF-decomposition}
        \sum_{F\in \mathcal{F}_n}q^{\pinv(F)}=\sum_{c\in \WComp(n-k,k)}\qbinom{n}{k}_q\qbinom{n-k}{c_1,c_2,\dots, c_k}_q  \prod_{\ell=1}^k \sum_{F\in \mathcal{F}_{c_\ell}}q^{\pinv(F)} \eqcom
    \end{equation}
    and so the claim follows.
\end{proof}

\section{Inversions in unit interval parking functions}\label{sec:inversions-in-upfs}

Let $\alpha = (a_1, a_2, \ldots , a_n)\in \PF_n$.
As in \cite{bradt2024unit}, we call $\alpha$ a \textbf{unit interval parking function} if every car parks in its preference or in the spot after its preference and write $\alpha \in \UPF_n$.
In \Cref{subsec:cayley biject to upf}, we begin our study by providing a bijection between the set of unit interval parking functions and the set of Cayley permutations, which have appeared in the literature under various names.
In \Cref{subsec:inv gen fuct  for UPF}, we give a generating function for the number of inversions in unit interval parking functions.
In \Cref{subsec:sym fct upf results}, we conclude with some related  symmetric function results.

\subsection{A connection to Cayley permutations }\label{subsec:cayley biject to upf}
We begin by providing a connection between Cayley permutations and unit interval parking functions.
A \emph{Cayley permutation} of length $n$ is a word $w\in \N^n$ such that if $i$~appears in $w$, then each $j\in [i]$ also appear in $w$. 
We denote the set of all Cayley permutations of length $n$ by $\mathfrak{C}_n$.  
Cayley permutations of length $n$ can be equivalently characterized as the set $\C_n = \{w\in [n]^n : \con(w) \in\Comp(n) \}$.
Cayley permutations appear in the literature under various names, including \emph{Fubini words}, \textit{packed words}, \textit{surjective words}, and \textit{initial words} \cite{Marberg_2021, caylerian}. 
We note that Cayley permutations have also been viewed as ordered set partitions, and descent generating functions for Cayley permutations have been studied in~\cite{cerbai2024caylerianpolynomials}. Refined counts of ordered set partitions have been studied before but in different contexts; see \Cref{remark:OSP-invs} for further details. 

First, we need an important characterization of unit interval parking functions. 
Let $\beta$ be the weakly increasing rearrangement of $\alpha$ and suppose $\{i\in [n]\mid \beta_i=i\}=\{i_1 < i_2 < \cdots < i_k\}$.  
Then, the \defterm{block structure} \cite[Definition 2.7]{bradt2024unit} of $\alpha$ is a decomposition of $\alpha$ into blocks $\pi_1 \mid \pi_2\mid\cdots \mid \pi_{k}$ of strings defined by
\[\pi_j=\beta_{i_j}\beta_{i_j+1}\cdots \beta_{i_{j+1}-1}\]
for each $j\in [k]$. 
This is also called the \textbf{prime decomposition} of $\alpha$ in \cite[Remark 2.7]{unit_perm}.

The following result establishes that a unit interval parking function is uniquely determined by its block sizes and the positions of its blocks.

\begin{theorem}[{\cite[Theorem.~2.9]{bradt2024unit}}]
\label{thm:upf_rearrangement}
Let $\alpha = (a_1,a_2, \dots, a_n) \in \UPF_{n}$ and let $\alpha^\uparrow = (a'_1,a_2', \dots, a'_n)$ be the weakly increasing rearrangement of $\alpha$.
Then:
\begin{enumerate}
\item $a'_i\in\{i,i+1\}$ for all $i$.
\item Place a separator in $\alpha^\uparrow$ before each element $a'_i$ satisfying $a'_i=i$, we obtain a decomposition $\alpha^\uparrow=\pi_1\mid\pi_2\mid\cdots\mid\pi_m$, where each $\pi_j$ is of the form $(i,i,i+1,i+2,\dots,i+j)$.  The words $\pi_j$ are called the \defterm{blocks} of $\alpha$.
\item The sizes of the blocks determine $\alpha^\uparrow$ uniquely.
\item The entries in each block appear in increasing order in $\alpha$.  Equivalently, a rearrangement $\sigma$ of $\alpha$ is in $\UPF_{n}$ if and only if the entries in $\sigma$ respect the relative order of the entries in each block $\pi_j$.
\item In particular, the number of UPFs with increasing rearrangement $\alpha^\uparrow$ is $\binom{n}{|\pi_1|,|\pi_2|,\ldots, |\pi_m|}$.
\end{enumerate}
\end{theorem}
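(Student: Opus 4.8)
The plan is to push everything through the weakly increasing rearrangement $\beta=\alpha^\uparrow$ and to analyze the parking process one consecutive spot-range at a time. I would begin from the classical characterization that $\alpha\in\PF_n$ if and only if $\beta_i\le i$ for all $i$; this gives the upper bound in part~(1) for free. For the matching lower bound $\beta_i\ge i-1$ I would use the unit-interval hypothesis via pigeonhole: if $\beta_i\le i-2$, then since $\beta_1\le\cdots\le\beta_i\le i-2$ at least $i$ cars prefer a spot $\le i-2$, and because in a UPF each such car parks in its preference or one spot later, all $i$ of them come to rest in spots $\le i-1$; but there are only $i-1$ such spots, a contradiction. Hence $\beta_i\in\{i-1,i\}$ (and $\beta_1=1$), which is exactly the data encoded by the block shape in part~(2).

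Parts~(2) and~(3) then follow quickly. Cutting $\beta$ before each position where $\beta_i=i$, a block running from position $i_j$ to $i_{j+1}-1$ has $\beta_{i_j}=i_j$, while for every interior position $i_j<t<i_{j+1}$ we have $\beta_t\ne t$ and $\beta_t\in\{t-1,t\}$, forcing $\beta_t=t-1$; this is precisely the shape $(i_j,i_j,i_j+1,\dots,i_{j+1}-2)$, degenerating to the single value $i_j$ when the block has size one. Part~(3) is then immediate: given the ordered list of block sizes, the boundaries are recovered by $i_1=1$ and $i_{j+1}=i_j+|\pi_j|$, and each block's entries are determined by the shape above, so the sizes reconstruct $\beta$.

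The heart of the argument is part~(4), and the structural fact I would establish first is that the blocks do not interact during the parking process. A car of block $j$ has preference in $[i_j,i_{j+1}-2]$, so greedily it never attempts a spot below $i_j$ and (when the outcome is a UPF) never parks above $i_{j+1}-1$; moreover no car of another block ever attempts a spot in $[i_j,i_{j+1}-1]$. Thus the cars of block $j$ are exactly the cars that fill the $|\pi_j|$ spots $[i_j,i_{j+1}-1]$, and their fate depends only on their relative order in $\sigma$. Within one block, a within-one assignment of the preference multiset $\{v,v,v+1,\dots,v+|\pi_j|-2\}$ to the spots $[v,v+|\pi_j|-1]$ is forced and unique: the two cars preferring $v$ take $v$ and $v+1$, and the car preferring $v+k$ takes $v+k+1$ for $k\ge 1$. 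I would then argue both directions. If the block entries occur in increasing order in $\sigma$, a direct cascade shows the greedy process realizes this forced assignment, so $\sigma\in\UPF_n$. Conversely, to realize the forced assignment greedily, the car landing in $v+k+1$ must arrive after the car occupying $v+k$; chaining these "arrives-after" constraints forces increasing order, and any single out-of-order pair pushes some car two spots past its preference, so $\sigma\notin\UPF_n$.

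For part~(5), since within each block the entries are pinned to their unique increasing order, a UPF with rearrangement $\beta$ is determined precisely by the interleaving data: an assignment of each of the $n$ positions of $\sigma$ to a block, with $|\pi_j|$ positions going to block $j$. Because consecutive blocks occupy disjoint value ranges (block $j$ uses the values $\{i_j,\dots,i_{j+1}-2\}$ and skips $i_{j+1}-1$), the originating block of any entry is recoverable from its value, so distinct interleavings yield distinct words with no overcounting, giving the multinomial $\binom{n}{|\pi_1|,\dots,|\pi_m|}$. I expect the main obstacle to be the "only if" direction of~(4)—making precise that one out-of-order pair inside a block necessarily overflows some car by more than one spot. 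The clean way to defuse this is the non-interaction reduction, which decouples the blocks so that the one-dimensional cascade argument applies verbatim to each block in isolation; once that decoupling is in place, everything else is bookkeeping.
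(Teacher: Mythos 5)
The paper never proves this theorem --- it is imported verbatim from \cite[Theorem 2.9]{bradt2024unit} --- so there is no internal argument to compare yours against, and your proposal has to stand on its own. It does: the pigeonhole bound, the block decomposition, the decoupling of blocks, the forced within-block assignment, and the arrives-after chaining for the converse of (4) together form a correct and essentially complete proof. Two remarks. First, part (1) as transcribed here carries an off-by-one typo: it should read $a'_i\in\{i-1,i\}$, as part (2) itself confirms (blocks of the form $(i,i,i+1,\dots)$ begin with a doubled letter, which is impossible if every $a'_i\ge i$); you silently proved the corrected statement, which is the right call. Second, the one place to tighten is your decoupling lemma, which you state with the hedge ``(when the outcome is a UPF)'' but then use in the forward direction of (4), where $\sigma$ is \emph{not yet known} to be a UPF. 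Fortunately the lemma is true unconditionally, by the same counting style you used in part (1): every rearrangement $\sigma$ of $\alpha$ lies in $\PF_n$ (the parking-function criterion depends only on the sorted word), cars never park below their preference, and exactly $i_j-1$ cars have preference below $i_j$; hence those cars fill exactly the spots $1,\dots,i_j-1$, and applying this at both boundaries $i_j$ and $i_{j+1}$ pins the cars of block $j$ to exactly the spots $i_j,\dots,i_{j+1}-1$ for \emph{any} arrival order. With the lemma stated in this unconditional form, cars of earlier blocks can never intrude into block $j$'s range, the within-block cascade argument applies verbatim, and both directions of (4) --- and then the injectivity argument in (5), which correctly exploits the disjointness of the blocks' value ranges --- go through exactly as you describe.
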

Next, we utilize \Cref{thm:upf_rearrangement} to establish a bijection between unit interval parking functions and Cayley permutations. 

\begin{lemma}\label{thm:bijection upf and cayley}
    Define $\psi:\UPF_n\to \C_n$ as follows: suppose $\alpha\in \UPF_n$ has block-structure $\pi_1\mid\pi_2\mid\cdots\mid\pi_k$. 
    Let $\alpha_i$ denote the $i$th entry of $\alpha$, and $\psi(\alpha)_i$ denote the $i$th entry of $\psi(\alpha)$.
    Set
    \[\psi(\alpha)_i=j\quad \text{ if $\alpha_i$ appears in $\pi_j$}\]
    for each $i\in n$. Then $\psi$ is a bijection such that for all $\alpha\in \UPF_n$, $\Inv(\psi(\alpha))=\Inv(\alpha)$.
\end{lemma}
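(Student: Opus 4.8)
The plan is to verify three things in turn: that $\psi$ lands in $\C_n$, that it preserves the inversion set, and that it is a bijection. The engine behind all three is the block structure supplied by \Cref{thm:upf_rearrangement}, so I would first extract the one structural fact I actually need: \emph{distinct blocks occupy disjoint, index-ordered ranges of values}. Writing $\alpha^\uparrow=\pi_1\mid\cdots\mid\pi_k$ with each $\pi_j$ of the form $(i,i,i+1,\dots)$ as in \Cref{thm:upf_rearrangement}, the smallest entry of $\pi_{j+1}$ is a tight value $i$ with $a'_i=i$, while the largest entry of $\pi_j$ is strictly smaller; iterating, every value appearing in $\pi_s$ is strictly less than every value appearing in $\pi_t$ whenever $s<t$. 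In particular each value of $\alpha$ lies in a unique block, so the rule $\psi(\alpha)_i=j$ is well defined, and since every block is nonempty, $\psi(\alpha)$ uses each of $1,2,\dots,k$ at least once and nothing else; hence $\con(\psi(\alpha))=(|\pi_1|,\dots,|\pi_k|)\in\Comp(n,k)$, which is exactly the condition $\psi(\alpha)\in\C_n$.

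The heart of the argument is the equality $\Inv(\psi(\alpha))=\Inv(\alpha)$, which I would prove pair-by-pair. Fix $a<b$, let $\alpha_a\in\pi_s$ and $\alpha_b\in\pi_t$, so that $\psi(\alpha)_a=s$ and $\psi(\alpha)_b=t$. If $s\ne t$, the range-separation fact gives $\alpha_a>\alpha_b\iff s>t$, so $(a,b)$ is an inversion of $\alpha$ exactly when it is one of $\psi(\alpha)$. If $s=t$, then $\psi(\alpha)_a=\psi(\alpha)_b$, so $(a,b)$ is never a $\psi(\alpha)$-inversion; on the other side, \Cref{thm:upf_rearrangement}(4) states that the entries of a single block appear in increasing order in $\alpha$, forcing $\alpha_a\le\alpha_b$ and thus ruling out an $\alpha$-inversion as well. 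The two cases together give $\Inv(\alpha)=\Inv(\psi(\alpha))$ as sets, and in particular $\inv(\alpha)=\inv(\psi(\alpha))$.

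Finally, for bijectivity I would exhibit the inverse explicitly, which simultaneously yields injectivity and surjectivity. Given $w\in\C_n$ with $\con(w)=(c_1,\dots,c_k)\in\Comp(n,k)$, the prescribed block sizes $c_1,\dots,c_k$ determine $\alpha^\uparrow$ uniquely by \Cref{thm:upf_rearrangement}(3); reading off the position sets $P_j=\{i:w_i=j\}$ and inserting the (weakly increasing) entries of $\pi_j$ into these positions in increasing order---the only placement permitted by \Cref{thm:upf_rearrangement}(4)---produces a well-defined $\alpha\in\UPF_n$ with $\psi(\alpha)=w$. Running this recipe backward recovers $\alpha$ from $\psi(\alpha)$, so $\psi$ is injective, and hence a bijection; as a consistency check one may also note $|\UPF_n|=\Fub_n=|\C_n|$. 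I expect the main obstacle to be the range-separation step: correctly reading the block form in \Cref{thm:upf_rearrangement} and confirming the \emph{strict} inequality between consecutive blocks' value ranges is precisely what makes the cross-block case of the inversion equality go through, and it is the one place where an off-by-one in the block description would silently break the bijection.
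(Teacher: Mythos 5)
Your proof follows the same route as the paper's: check that the image lies in $\C_n$, prove $\Inv(\psi(\alpha))=\Inv(\alpha)$ by a two-case analysis, and establish bijectivity by reconstructing $\alpha$ from $w$ using the content of $w$. Your treatment of the inversion equality is in fact slightly more careful than the paper's: you isolate and prove the range-separation fact (every value in $\pi_s$ is strictly below every value in $\pi_t$ when $s<t$), which the paper uses only implicitly when it asserts that $\alpha_i>\alpha_j$ forces $\alpha_i$ into a strictly larger-indexed block. Your injectivity argument (the recipe run backward recovers $\alpha$ from $\psi(\alpha)$) is also sound, since parts (3) and (4) of \Cref{thm:upf_rearrangement} legitimately apply to the given $\alpha\in\UPF_n$.

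The one genuine soft spot is surjectivity. You write that for $w\in\C_n$ with $\con(w)=(c_1,\dots,c_k)$, the block sizes ``determine $\alpha^\uparrow$ uniquely by \Cref{thm:upf_rearrangement}(3).'' But part (3) is a \emph{uniqueness} statement about the rearrangement of an already-given unit interval parking function; it does not assert that every composition of $n$ is realized as the block-size vector of some element of $\UPF_n$. Likewise, part (4) characterizes which rearrangements of an \emph{existing} UPF are again UPFs. Without an existence statement, neither part produces an element of $\UPF_n$ from an arbitrary $w$, so your inverse is not yet known to be defined on all of $\C_n$. The paper closes exactly this hole by citing the proof of \cite[Theorem 2.5]{bradt2024unit} to conclude that the constructed sequence is a unit interval parking function. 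You can close it in either of two ways: by a direct check that the weakly increasing word whose $j$th block is the length-$c_j$ prefix of $(a,a,a+1,a+2,\dots)$, with $a=c_1+\cdots+c_{j-1}+1$, parks every car at most one spot past its preference (blocks fill consecutive runs of spots, so this is immediate); or by promoting your parenthetical ``consistency check'' $|\UPF_n|=\Fub_n=|\C_n|$ to the actual argument, since injectivity together with equal finite cardinalities yields bijectivity, and both counts are results the paper already quotes. With either repair the proof is complete.
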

\begin{proof}
    First, the image of $\psi$ is indeed a Cayley permutation since the numbers $1,2,\dots,k$ appear at least once each in $\psi(\alpha)$ if $\alpha$ has $k$ blocks.

    Suppose $\alpha\in \UPF_n$ and $\alpha_i>\alpha_j$ for some $i<j$. By \Cref{thm:upf_rearrangement} part (4),  $\alpha_i$ is in a strictly larger (indexed) block than $\alpha_j$. Hence, $\psi(\alpha)_i>\psi(\alpha)_j$. On the other hand, if $\alpha_i\leq \alpha_j$ for some $i<j$, then $\alpha_i$ and $\alpha_j$ are either in the same block, or $\alpha_i$ is in a smaller (indexed) block than $\alpha_j$. Hence, $\psi(\alpha)_i\leq \psi(\alpha)_j$. Thus, $\Inv(\psi(\alpha))=\Inv(\alpha)$. 

    Suppose $\psi(\alpha)=\psi(\beta)$. Then, we can see that $\alpha$ and $\beta$ must have the same block structure since the contents of $\psi(\alpha)$ and $\psi(\beta)$ are the same. By definition, $\psi(\alpha)_i=\psi(\beta)_i=j$ if an element of block $\pi_j$ appears in position $i$ of $\alpha$ and $\beta$. By \Cref{thm:upf_rearrangement} part (4), there is only one way to arrange the elements of each block $\pi_j$ such that an element of $\pi_j$ appears in the positions $i$ such that $\psi(\alpha)_i=j$. Hence, $\alpha=\beta$. 

    Given a Cayley permutation $w\in \C_n$, let $c=\con(w)=(c_1,c_2,\dots,c_k)$ be the content of $w$. 
    For each $j\in [k]$, if $i_1<i_2<\cdots<i_{c_j}$ are the positions of the $j$'s in $w$, then replace the $j's$ (from left to right) by $a,a,a+1,a+2,\dots,a+c_j-1$, where $a=c_1+c_2+\cdots+c_{j-1}+1$ to construct a sequence $\alpha$ (where we set $c_0=0$). 
    The sequence $\alpha$ is proven to be a unit interval parking function in the proof of \cite[Theorem 2.5]{bradt2024unit}. 
    Moreover, $\psi(\alpha)=w$ since the block structure of $\alpha$ is $c$.
\end{proof}

\subsection{Inversion generating function for unit interval parking functions}\label{subsec:inv gen fuct  for UPF}
For $\pi\in \S_n$, let $\asc(\pi)$ denote the number of \defterm{ascents} of $\pi=\pi_1\pi_2\cdots\pi_n$ i.e., the number of positions $i\in [n-1]$ such that $\pi_i<\pi_{i+1}$. 
Define for each $n\in \N$, the polynomials
\begin{equation}
        A_n^{\inv,\asc}(q,t)=\sum_{\pi \in \S_n} q^{\inv(\pi)}t^{\asc(\pi)}\qquad \mbox{and}\qquad
        \UPF_n(q)= \sum_{\alpha \in \UPF_n} q^{\inv(\alpha)}.
\end{equation}
We provide two methods for obtaining a $q$-exponential generating function for $\UPF_n(q)$: (1) by showing that $\UPF_n(q)$ is a specialization of $A_n^{\inv,\asc}(q,t)$ and applying a theorem of Stanley \cite{stanley1976binomial}; and (2) a direct computation. We start with the former.
\begin{proposition}\label{Theorem: inv from Sn}
     For $n\geq 1$,  $A_n^{\inv,\asc}(q,2)=\UPF_n(q)$.
\end{proposition}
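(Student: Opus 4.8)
The plan is to establish the identity $A_n^{\inv,\asc}(q,2)=\UPF_n(q)$ by passing through the bijection $\psi:\UPF_n\to\C_n$ of \Cref{thm:bijection upf and cayley}, which preserves the inversion set, so that $\UPF_n(q)=\sum_{w\in\C_n}q^{\inv(w)}$. Thus it suffices to show that $A_n^{\inv,\asc}(q,2)=\sum_{w\in\C_n}q^{\inv(w)}$. First I would interpret the substitution $t=2$ combinatorially: writing $t^{\asc(\pi)}=(1+1)^{\asc(\pi)}=\sum_{S\subseteq\Asc(\pi)}1$, we get $A_n^{\inv,\asc}(q,2)=\sum_{\pi\in\S_n}\sum_{S\subseteq\Asc(\pi)}q^{\inv(\pi)}$, i.e. a sum over pairs $(\pi,S)$ where $S$ is a subset of the ascent set of $\pi$. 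The goal is then to produce a bijection between such pairs $(\pi,S)$ and Cayley permutations $w\in\C_n$ that sends $\inv(\pi)$ to $\inv(w)$.

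The key step is the following standard correspondence between a Cayley permutation and a permutation-plus-ascent-subset datum. Given $w\in\C_n$, one recovers a permutation $\pi$ by standardizing $w$: reading the positions of the smallest value (left to right), then the next smallest, and so on, assigning the labels $1,2,\dots,n$ in that order produces a permutation $\std(w)=\pi$. The subset $S$ records which consecutive equalities in the sorted structure of $w$ were forced; concretely, a position $i$ where $w$ has a weak ascent that was actually an equality ($w_i=w_{i+1}$) corresponds to a genuine ascent of $\pi$ that we ``mark'' by placing $i\in S$. I would verify that this standardization map is a bijection from $\C_n$ onto the set of pairs $(\pi,S)$ with $S\subseteq\Asc(\pi)$: the inverse reconstructs $w$ from $\pi$ by merging values across exactly the marked ascents, and one checks this is well-defined and mutually inverse. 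The crucial property to confirm is $\inv(w)=\inv(\pi)$, which holds because merging equal values across ascent positions neither creates nor destroys inversions — strict descents of $\pi$ remain strict descents of $w$ (hence inversions), ascents that are merged become equalities (non-inversions, as they were), and unmerged ascents stay strict ascents.

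I expect the main obstacle to be pinning down the precise form of the bijection so that the three conditions — surjectivity onto the Cayley permutations, the $S\subseteq\Asc(\pi)$ constraint, and inversion-preservation — hold simultaneously and cleanly. In particular, care is needed to ensure that distinct pairs $(\pi,S)$ map to distinct words and that every Cayley permutation arises exactly once; the subtlety is that the content of $w$ is encoded jointly by $\pi$ and $S$, and one must check that the positions forced to be equal (the marked ascents) are compatible with $\pi$ being the standardization. An alternative, cleaner route that sidesteps some of this bookkeeping would be to invoke Stanley's theory of $(q,t)$-Eulerian generating functions directly: the exponential generating function $\sum_n A_n^{\inv,\asc}(q,t)\frac{z^n}{[n]_q!}$ has a known closed form, and specializing $t=2$ should match the generating function $\frac{1}{2-\exp_q(z)}$ promised for $\UPF_n(q)$ in \Cref{eq:pf-upf-inv-gen-functs}. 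Since the proposition is stated as a standalone coefficient identity rather than a generating-function statement, I would favor the explicit bijective argument, reserving the generating-function approach as the backup that the subsequent application of Stanley's theorem \cite{stanley1976binomial} presumably exploits.
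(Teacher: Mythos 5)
Your overall strategy is the same as the paper's: expand $2^{\asc}$ as a sum over subsets of ascents, so that $A_n^{\inv,\asc}(q,2)=\sum_{(\pi,S)}q^{\inv(\pi)}$, and then biject such pairs with Cayley permutations via (left-to-right) standardization, checking that inversions are preserved. However, your concrete definition of the marked set $S$ has a genuine gap: you record the \emph{positions} $i$ with $w_i=w_{i+1}$ (adjacent equalities), but the merging data of a Cayley permutation lives on \emph{values}, not adjacent positions. Equal letters of $w$ need not be adjacent, so your map loses information and is not injective: both $w=121$ and $w=132$ standardize to $\pi=132$ and have no adjacent equalities, so both map to $(132,\emptyset)$. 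Likewise your proposed inverse fails: applied to $(132,\{1\})$, merging the entries in positions $1,2$ gives $w=112$, whose standardization is $123\neq 132$. The correct bookkeeping — which is exactly what the paper does — is to let $S$ be the set of values $m$ such that the letters of $w$ receiving labels $m$ and $m+1$ under standardization are equal; the compatibility constraint is then $S\subseteq\Asc(\pi^{-1})$ (value $m$ must occur to the left of value $m+1$), not $S\subseteq\Asc(\pi)$. The identity still closes because $\inv(\pi)=\inv(\pi^{-1})$, which is how the paper passes from $\sum_\sigma q^{\inv(\sigma)}2^{\asc(\sigma^{-1})}$ to $\sum_\sigma q^{\inv(\sigma)}2^{\asc(\sigma)}$; this inverse-ascent subtlety is precisely the point your position-based marking misses.

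Your fallback route (specialize Stanley's generating function for $A_n^{\inv,\asc}(q,t)$ at $t=2$ and compare with $\sum_n \UPF_n(q)z^n/[n]_q!=1/(2-\exp_q(z))$) is viable but requires care about circularity within the paper: the paper's Proof 1 of the $\UPF$ generating function uses this very proposition together with Stanley's formula. To make the fallback non-circular you would need the paper's independent elementary proof of the $\UPF$ generating function (via MacMahon's $q$-multinomial formula for $\sum_{c\in\Comp(n)}\qbinom{n}{c_1,\dots,c_{\ell(c)}}_q$), and then the two generating functions could legitimately be compared coefficientwise.
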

\begin{proof}
    By \Cref{thm:bijection upf and cayley}, we have 
    \[\UPF_n(q)=\sum_{w\in \C_n}q^{\inv(w)}.\]
   Let $\Asc(\sigma)=\{i\in[n-1]:\sigma_i<\sigma_{i+1}\}$  denote the set of ascents of a permutation $\sigma$. Define a set \[\mathcal{P}_n=\{(\sigma,S)\mid \sigma\in \S_n,\ S\subseteq \Asc(\sigma^{-1})\}.\]
    Define a map $\eta:\mathcal{P}_n\to \C_n$ as follows: suppose $[n]\setminus S=\{s_1<\cdots<s_r\}$. Define a partition $B=(B_1,\dots,B_{r})$ of $[n]$ by 
    \[B_k=\{s_{k-1}+1,s_{k-1}+2,\dots,s_{k}\}\]
    for each $k\in [r]$, where $s_0=0$. For example, if $(\sigma,S)=(236841759,\{2,3,6,8\})$, then $\sigma^{-1}=612583749$ and $\Asc(\sigma^{-1})=\{2,3,4,6,8\}$. Note $S\subseteq \Asc(\sigma^{-1})$. So $[9]\setminus S=\{s_1=1, s_2=4, s_3=5,s_4=7,s_5=9\}$, and 
    \[B=(\{1\},\{2,3,4\},\{5\},\{6,7\},\{8,9\}).\]
    Then, set $w_i=k$ if $\sigma_i\in B_k$ and set $\eta(\sigma,S)=w=w_1w_2\cdots w_n$. Continuing the example, \[\eta(236841759,\{2,3,6,8\})=224521435.\] Since each block of the partition is nonempty, $w\in \C_n$. Suppose $(i,j)\in \Inv(\sigma)$. Then $\sigma_i$ must be in a larger indexed block than the one containing $\sigma_j$ by construction. Hence, $w_i>w_j$, so $(i,j)\in \Inv(w)$. 
    
    On the other hand, suppose we are given $w\in \C_n$ with $r$ distinct values and set $B_k=\{i\in [n]\mid w_i=k\}$ for each $k\in [r]$. 

Suppose the content of $w$ is $c=(c_1,\dots,c_r)$. Then for each $k\in [r]$, set $s_k=c_1+\cdots+c_{k}$ and then define 
\[B_k=\{s_{k-1}+1,s_{k-1}+2,\dots,s_k\},\]
where $s_0=0$. Then we set $\sigma$ to be obtained by replacing the $j$-th leftmost $k$ in $w$ by $s_{k-1}+j$ for each $k\in [r]$ and $j\in [c_k]$.
For example, let $w=224 521 435$. The  content of $w$ is $c=(1,3,1,2,2)$, so $s_1=1,s_2=4,s_3=5,s_4=7,s_5=9$  and $(B_1,\dots,B_5)=(\{1\},\{2,3,4\},\{5\},\{6,7\},\{8,9\}).$ Moreover, $\sigma=236 841 759$.  Now, set 
\[S\coloneqq \bigcup_{k=1}^r B_k\setminus \{\max(B_k)\}.\]
Suppose $m\in S$, say $m\in B_k\setminus \max(B_k)$. Then, $m+1\in B_{k}$ because $B_k$ contains only consecutive numbers. Hence, $m=s_{k-1}+j$ and $m+1=s_{k-1}+j+1$ for some $j\in [c_k-1]$. By construction of $\sigma$, $\sigma_i=m$ implies that $w_i$ is the $j$-th leftmost occurrence of $k$ in $w$, and moreover the $(j+1)$-th occurrence of $k$ is at some positions $w_{i'}$ with $i'>i$. Then, $\sigma_{i'}=m+1$, so $m\in \Asc(\sigma^{-1})$. Therefore, $S\subseteq \Asc(\sigma^{-1}).$

    From here, one can show that $\eta(\sigma,S)=w$, so $\eta$ is injective. Since 
    \[|\C_n|=\Fub_n=\sum_{\sigma\in \S_n}2^{\asc(\sigma)},\]
    (see \cite[p.~269, eqn.~(6.39)]{GKP} for second equality), $\eta$ is a bijection. Putting this all together, we have 
    \begin{align*}
    \UPF_n(q)&=\sum_{w\in \C_n}q^{\inv(w)}\\
    &=\sum_{(\sigma,S)\in \mathcal{P}_n}q^{\inv(\sigma)}\\
    &=\sum_{\sigma\in \mathfrak{S}_n}q^{\inv(\sigma)}\cdot |\{S\subseteq[n]\mid S\subseteq \Asc(\sigma^{-1})\}|\\
    &=\sum_{\sigma\in \mathfrak{S}_n}q^{\inv(\sigma)}2^{\asc(\sigma^{-1})}\\
    &=\sum_{\sigma\in \mathfrak{S}_n}q^{\inv(\sigma)}2^{\asc(\sigma)}\\
    &=A_n^{\inv,\asc}(q,2),
    \end{align*}
    as desired.
\end{proof}

We provide two proofs of the following result.
\begin{theorem}\label{eq:upf-inv-gen-func}
    We have
      \[\sum_{n\geq 0} \UPF_{n}(q)\frac{z^n}{[n]_q!}=\frac{1}{2-\exp_{q}(z)} \ .\]
\end{theorem}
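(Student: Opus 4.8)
The plan is to run the same specialization machinery used for parking functions, now with Cayley permutations playing the role that rooted labeled forests played before. By \Cref{thm:bijection upf and cayley}, $\psi$ is an inversion-preserving bijection, so $\UPF_n(q)=\sum_{w\in\C_n}q^{\inv(w)}$. Since rearranging a word never changes its content, $\C_n$ is $\S_n$-invariant, so $\CC[\C_n]$ is an $\S_n$-module and I may apply \Cref{cor:sym-to-q-exp-gen-funs} with $W_n=\C_n$ and $F_n(q)=\UPF_n(q)$. This reduces the whole problem to computing the generating function $\sum_{n\geq 0}\ch\CC[\C_n]\,z^n$ of the Frobenius images, and then passing through stable principal specialization together with the substitution $z\mapsto z(1-q)$ built into \Cref{cor:sym-to-q-exp-gen-funs}.

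First I would identify the Frobenius image of $\CC[\C_n]$. The $\S_n$-orbits in $\C_n$ are exactly the rearrangement classes, indexed by their contents, and these contents are precisely the compositions of $n$ (every value in $\{1,\dots,k\}$ must occur). As recorded inside the proof of \Cref{lem:character-of-words-inversions}, the Frobenius image of a single orbit $\CC[\S_n\cdot w]$ with $\con(w)=c$ is $h_c$, whence $\ch\CC[\C_n]=\sum_{c\in\Comp(n)}h_c$. Summing over $n$ and recognizing a geometric series over all compositions gives
\[
\sum_{n\geq 0}\ch\CC[\C_n]\,z^n=\sum_{k\geq 0}\bigl(H(z)-1\bigr)^k=\frac{1}{2-H(z)},
\]
which simultaneously reproves the unit-interval half of \Cref{eq:pf-upf-sym-gen-functs}. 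Substituting $z\mapsto z(1-q)$ and applying $\ps$ coefficientwise in $z$, then using $\ps\bigl(H(z(1-q))\bigr)=\exp_q(z)$ (noted just after the proof of \Cref{thm:PF-is-q-LambertW-Inverse}), turns the right-hand side into $\tfrac{1}{2-\exp_q(z)}$; by \Cref{cor:sym-to-q-exp-gen-funs} the left-hand side becomes $\sum_{n}\UPF_n(q)\tfrac{z^n}{[n]_q!}$, which is the claim.

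For a second, purely $q$-combinatorial proof I would instead expand $\tfrac{1}{2-\exp_q(z)}=\sum_{k\geq 0}(\exp_q(z)-1)^k$ and extract the coefficient of $z^n/[n]_q!$, reducing the identity to
\[
\UPF_n(q)=\sum_{k\geq 0}\ \sum_{c\in\Comp(n,k)}\qbinom{n}{c_1,c_2,\dots,c_k}_q,
\]
which then follows from $\UPF_n(q)=\sum_{w\in\C_n}q^{\inv(w)}$ by grouping the Cayley permutations according to their content $c\in\Comp(n,k)$ and applying the MacMahon $q$-multinomial evaluation $\sum_{\con(w)=c}q^{\inv(w)}=\qbinom{n}{c}_q$ from \Cref{lem:character-of-words-inversions}. (A further route runs through \Cref{Theorem: inv from Sn}, reducing the statement to the $t=2$ case of the $q$-Eulerian generating function.) I expect the only real subtlety to be formal rather than combinatorial, namely justifying that $\ps$ and the substitution $z\mapsto z(1-q)$ may be applied coefficientwise to the non-polynomial series $\tfrac{1}{2-H(z)}$; this is legitimate because that series is a well-defined formal power series in $z$ whose $z^n$-coefficient is the single degree-$n$ symmetric function $\ch\CC[\C_n]$, so $\ps$ commutes with the $z$-adic limit $\sum_k(H(z)-1)^k$.
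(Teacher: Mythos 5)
Your proposal is correct, and both of your routes differ from the paper's two proofs in instructive ways. The paper's first proof does not go through symmetric functions at all: it combines \Cref{Theorem: inv from Sn} (that $\UPF_n(q)=A_n^{\inv,\asc}(q,2)$) with Stanley's $q$-Eulerian generating function \eqref{eq:inv-asc-gen-function} evaluated at $t=2$. Your first route instead lifts the problem to the Frobenius level---in effect proving \Cref{thm:UPF-sym-generating-function} first and then specializing via \Cref{cor:sym-to-q-exp-gen-funs}---which is exactly the strategy the paper uses for $\PF_n(q)$ in \Cref{thm:PF-is-q-LambertW-Inverse}, but not the one it uses here; your route has the added benefit of deriving the symmetric-function statement and the $q$-statement in one pass. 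Your second route is close in spirit to the paper's second (elementary) proof, but your expansion is cleaner: the paper expands $\frac{1}{2-\exp_q(z)}=\frac{1}{2}\sum_{k\geq 0}(\exp_q(z)/2)^k$, which produces sums over \emph{weak} compositions and forces an extra step converting weak compositions to compositions via the identity $\sum_{j\geq 0}\binom{m+j}{j}2^{-(m+j+1)}=1$; by expanding instead in powers of $\exp_q(z)-1$, which has no constant term, you land directly on strong compositions and need only MacMahon's $q$-multinomial evaluation from \Cref{lem:character-of-words-inversions}. The same simplification would streamline the paper's proof of \Cref{thm:UPF-sym-generating-function}, where you use $\sum_{k\geq 0}(H(z)-1)^k$ in place of the paper's geometric series in $H(z)/2$. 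Your closing remark on the formal point---that $\ps$ and the substitution $z\mapsto z(1-q)$ commute with the $z$-adic limit because each $z^n$-coefficient involves only finitely many terms---is the right justification and is indeed the only step requiring care.
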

\begin{proof}[Proof 1: Specialization]
Stanley \cite[Corollary 3.6]{stanley1976binomial} proved that $A_n^{\inv,\asc}(q,t)$ satisfies
\begin{equation}\label{eq:inv-asc-gen-function}
    1+\sum_{n\geq 1}t A_{n}^{\inv,\asc}(q,t)\frac{z^n}{[n]_q!}=\frac{1-t}{1-t\Exp_q(z(1-t))} \ .
\end{equation}

    Set $t=2$ in \eqref{eq:inv-asc-gen-function} 
    to get

   \begin{align*}
        \sum_{n\geq 1}2\UPF_n(q)\frac{z^n}{[n]_q!}=\sum_{n\geq 1}2A_{n}^{\inv,\asc}(q,2)\frac{z^n}{[n]_q!}=\frac{-1}{1-2\Exp_q(-z)}+1=\frac{\exp_q(z)}{2-\exp_q(z)}+1
    \end{align*}

where the first equality comes from \Cref{Theorem: inv from Sn}, while the third follows from the identity $\exp_q(-z)=1/\Exp_q(z)$. From there, the proof follows by algebraic manipulations.
\end{proof}

\begin{proof}[Proof 2: Elementary]
This result can be found directly without relying on \Cref{eq:inv-asc-gen-function}. Indeed, \Cref{thm:bijection upf and cayley} implies that 
\begin{equation}\UPF_n(q)=\sum_{w\in \C_n}q^{\inv(w)}=\sum_{c\in \Comp(n)} \sum_{\substack{w\in [n]^n\\\con(w)=c}}q^{\inv(w)}=\sum_{c\in \Comp(n)} \qbinom{n}{c_1,c_2,\dots,c_{\ell(c)}}_q,\end{equation}
where the final equality comes from MacMahon \cite{MacMahon}.
Starting with the right-hand side of \Cref{eq:upf-inv-gen-func}, we have 

\begin{align*}
    \frac{1}{2-\exp_{q}(z)}&=
    \frac{1}{2}\cdot \frac{1}{1-\frac{\exp_{q}(z)}{2}}\\
    &=\frac{1}{2}\sum_{k\geq 0}\frac{1}{2^k}\exp_q(z)^{k}\\
    &=\sum_{k\geq 0}\frac{1}{2^{k+1}}\lp \sum_{n\geq 0}\frac{z^n}{[n]_q!}\rp ^{k}\\
    &=\sum_{k\geq 0}\frac{1}{2^{k+1}}\sum_{n\geq 0}z^n \sum_{c\in \WComp(n,k)}\prod_{i=1}^{k}\frac{1}{[c_i]_q!}\\
    &=\sum_{k\geq 0}\frac{1}{2^{k+1}}\sum_{n\geq 0}\frac{z^n}{[n]_q!} \sum_{c\in \WComp(n,k)}\frac{[n]_q!}{[c_1]_q!\cdots [c_k]_q!}\\
    &=\sum_{k\geq 0}\frac{1}{2^{k+1}}\sum_{n\geq 0}\frac{z^n}{[n]_q!} \sum_{c\in \WComp(n,k)} \qbinom{n}{c_1,c_2,\dots,c_{k}}_q\\
    &=\sum_{n\geq 0}\frac{z^n}{[n]_q!} \sum_{k\geq 0}\frac{1}{2^{k+1}}\sum_{c\in \WComp(n,k)} \qbinom{n}{c_1,c_2,\dots,c_{k}}_q\\
     &=\sum_{n\geq 0}\frac{z^n}{[n]_q!} \sum_{c\in \WComp(n)} \qbinom{n}{c_1,c_2,\dots,c_{\ell(c)}}_q\cdot \frac{1}{2^{\ell(c)+1}},
\end{align*}
where we recall that $\ell(c)$ denotes the number of parts of a weak composition $c$.
Since every weak composition can be made by adding 0's to a (strong) composition, we have
\begin{align*}
    \sum_{c\in \WComp(n)} \qbinom{n}{c_1,c_2,\dots,c_{\ell(c)}}_q\cdot \frac{1}{2^{\ell(c)}}=\sum_{c\in \Comp(n)} \qbinom{n}{c_1,c_2,\dots,c_{\ell(c)}}_q \sum_{j\geq 0}\binom{\ell(c)+j}{j}\frac{1}{2^{\ell(c)+j}}.
\end{align*}

Expanding $ 2^{-(m+1)}(1-x)^{-(m+1)}$ with Newton's generalized binomial theorem and then substituting $x=1/2$, we have that for all positive integers $m$,
  \[\sum_{j\geq 0}\binom{m+j}{j}\frac{1}{2^{m+j+1}}=1.\]  
 Hence, the claim follows.
\end{proof}

\begin{remark}\label{remark:OSP-invs}
The definition of $\UPF_n(q)$ along with the closed form in \Cref{eq:upf-inv-gen-func}  make it a compelling $q$-analog of $\Fub_n$. However, there is a different $q$-analog of $\Fub_n$, which has received much more attention in the literature. Originally studied under the name $\operatorname{ROS}$ by Ste\'ingrimsson \cite{OSPstats}, an ``inversion,'' as defined by Remmel and Wilson \cite{RemmelWilson}, of a ordered set partition $B=(B_1,\dots,B_k)$ is pair $(i,B_j)$ such that $i$ appears in an earlier block than $B_j$ and $i$ is greater than something in $B_j$. This kind of inversion has been related to the Delta Conjecture, an important conjecture in the theory of symmetric functions \cite{DeltaConjecture}. This ``inversion'' statistic has been shown to be equidistributed with a number of other statistics, and the resulting $q$-analog has many wonderful properties \cite{DeltaConjecture,Rhoades,ospcoinv}. The Delta Conjecture context is also closely related to parking functions, leading us to wonder whether it is coincidence that ordered set partitions can be viewed as parking functions themselves. \end{remark}

\subsection{Unit interval parking function symmetric function}\label{subsec:sym fct upf results}
To generalize to a result on symmetric functions, we require an $\S_n$-action on $\UPF_n$. Suppose $\alpha\in \UPF_n$ has block structure $\pi_1\mid\pi_2\mid \cdots\mid \pi_k$, as described in \Cref{thm:upf_rearrangement}. 
For $i\in [n-1]$, define \[(i,i+1)\cdot \alpha=\begin{cases} \alpha&\text{if $\alpha_i,\alpha_{i+1}$ are in the same block}\\
(\alpha_1,\alpha_2,\dots,\alpha_{i-1},\alpha_{i+1},\alpha_i,\alpha_{i+2},\dots,\alpha_n)&\text{if $\alpha_i,\alpha_{i+1}$ are in different blocks.}
\end{cases}\]

One can check that this defines an $\S_n$-action on $\UPF_n$, as noted in \cite[pg.\ 11]{unit_perm}. 
The \defterm{content} of $\alpha$ is the composition \[\con(\alpha)=(|\pi_1|,|\pi_2|,\dots,|\pi_k|).\] 
In \cite[Theorem 2.9]{bradt2024unit}, the authors show that two unit interval parking functions have the same content if and only if they are rearrangements of each other. 
Moreover, they show that for each composition $c\in \Comp(n)$ there is a unit interval parking function $\alpha$ with $\con(\alpha)=c$.
Thus, \[\UPF_n(\mathbf{x})=\sum_{c\in \Comp(n)}h_c.\]
Since $\UPF_n$ and $\C_n$ have the same Frobenius image, they are isomorphic as $\S_n$-modules. 
In particular, we show that the map $\psi$, defined in \Cref{thm:bijection upf and cayley},
is an explicit isomorphism.
\begin{proposition}
   Let $\psi:\UPF_n\to \C_n$ be as in \Cref{thm:bijection upf and cayley}. 
   Then, for all $\alpha\in \UPF_n$, $\con(\psi(\alpha))=\con(\alpha)$. 
   Moreover, $\psi$ extends to an $\S_n$-module isomorphism.
\end{proposition}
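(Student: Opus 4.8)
The plan is to treat the two assertions in turn: the content equality is essentially a matter of unwinding the definition of $\psi$, while the module statement reduces to an equivariance check on the Coxeter generators $(i,i+1)$.

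For the content equality I would argue directly. By the definition of $\psi$ in \Cref{thm:bijection upf and cayley}, we have $\psi(\alpha)_i=j$ exactly when $\alpha_i$ is an entry of the block $\pi_j$. Hence $\con(\psi(\alpha))_j$, the number of occurrences of the value $j$ in $\psi(\alpha)$, counts precisely those positions $i$ for which $\alpha_i$ lies in $\pi_j$. By \Cref{thm:upf_rearrangement}(4), the entries of the blocks partition the coordinate positions of $\alpha$, so block $\pi_j$ receives exactly $|\pi_j|$ of them. Therefore $\con(\psi(\alpha))=(|\pi_1|,\dots,|\pi_k|)=\con(\alpha)$.

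For the module statement, recall from \Cref{thm:bijection upf and cayley} that $\psi$ is already a bijection of sets, so it extends by linearity to a vector-space isomorphism $\CC[\UPF_n]\to\CC[\C_n]$; it remains only to verify $\S_n$-equivariance. Since the transpositions $(i,i+1)$ generate $\S_n$, it suffices to check $\psi((i,i+1)\cdot\alpha)=(i,i+1)\cdot\psi(\alpha)$ for each $i\in[n-1]$, where the right-hand action is the word action, which simply interchanges the entries in positions $i$ and $i+1$ of $\psi(\alpha)$. I would split along the two cases in the definition of the action on $\UPF_n$. If $\alpha_i,\alpha_{i+1}$ lie in the same block, then $(i,i+1)\cdot\alpha=\alpha$, while $\psi(\alpha)_i=\psi(\alpha)_{i+1}$, so interchanging those two equal coordinates fixes $\psi(\alpha)$; both sides equal $\psi(\alpha)$. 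If $\alpha_i,\alpha_{i+1}$ lie in different blocks, then $(i,i+1)\cdot\alpha$ is the rearrangement $\alpha'$ obtained by swapping positions $i$ and $i+1$; as $\alpha'$ is a rearrangement of $\alpha$, it is again a unit interval parking function with the same block structure, and $\psi(\alpha')$ agrees with $\psi(\alpha)$ except that the block indices in positions $i$ and $i+1$ are interchanged, which is exactly $(i,i+1)\cdot\psi(\alpha)$.

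The individual computations are short, so I do not anticipate a genuine obstacle. The one point requiring care is in the different-block case: one must confirm that $\alpha'$ has the same block structure as $\alpha$, so that $\psi$ is computed relative to the same decomposition on both sides. This is immediate from the fact (already used to set up \Cref{thm:bijection upf and cayley}) that the block structure depends only on the weakly increasing rearrangement, which is unchanged under permuting coordinates. The main thing to stay alert to is aligning the two conventions—the blockwise action on $\UPF_n$ against the positional word action on $\C_n$—so that the swap on one side matches the swap on the other.
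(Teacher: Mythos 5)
Your argument is correct and follows essentially the same route as the paper's proof: the content equality by unwinding the definition of $\psi$, and $\S_n$-equivariance checked on the adjacent transpositions $(i,i+1)$ via the same two cases (same block versus different blocks). One small repair: the inference ``$\alpha'$ is a rearrangement of $\alpha$, hence again a unit interval parking function'' is not valid as stated, since by \Cref{thm:upf_rearrangement}(5) most rearrangements of a unit interval parking function are not unit interval parking functions; here $\alpha'\in\UPF_n$ either because the $\S_n$-action on $\UPF_n$ is already known to be well-defined, or because swapping two entries lying in different blocks preserves the relative order of the entries within each block, so \Cref{thm:upf_rearrangement}(4) applies.
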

\begin{proof}
    For $\alpha\in \UPF_n$ with block structure $\pi_1\mid\pi_2\mid \cdots \mid \pi_k$, The map $\psi$ replaces each element of block $\pi_j$ with a $j$ for each $j\in [k]$. Hence, entry $j$ of $\con(\alpha)=(|\pi_1|,|\pi_2|,\dots,|\pi_k|)$ is exactly the number of $j$'s in $\psi(\alpha)$ for each $j\in [k]$, so $\con(\alpha)=\con(\psi(\alpha))$. 

    Suppose $i\in [n-1]$. If $\alpha_i$ and $\alpha_{i+1}$ are in the same block, then $\psi(\alpha)_i=\psi(\alpha)_{i+1}$. Hence, $(i,i+1)\cdot \psi(\alpha)=\psi(\alpha)=\psi((i,i+1)\cdot \alpha)$. On the other hand, if $\alpha_i$ and $\alpha_{i+1}$ are in different blocks, then $\psi(\alpha)_i\neq\psi(\alpha)_{i+1}$, so
    \[(i,i+1)\cdot\psi(\alpha)=(\psi(\alpha)_1,\psi(\alpha)_2,\dots,\psi(\alpha)_{i-1},\psi(\alpha)_{i+1},\psi(\alpha)_i,\psi(\alpha)_{i+2},\dots,\psi(\alpha)_n)=\psi((i,i+1)\cdot \alpha).\]
Hence, $\psi$ is an isomorphism of $\S_n$-actions.
\end{proof}
\begin{theorem}\label{thm:UPF-sym-generating-function}
    We have
    $\sum_{n\geq 0}\UPF_n(\mathbf{x})z^n=\frac{1}{2-H(z)}$, where $H(z)$ is defined in \Cref{def: H(z)}.
\end{theorem}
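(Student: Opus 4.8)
The plan is to establish the identity
\[
\sum_{n\geq 0}\UPF_n(\mathbf{x})z^n=\frac{1}{2-H(z)}
\]
by proving the equivalent recurrence obtained from clearing the denominator. Rewriting the target as $\big(2-H(z)\big)\sum_{n\geq 0}\UPF_n(\mathbf{x})z^n=1$, and recalling $H(z)=\sum_{k\geq 0}h_k z^k$ with $h_0=1$, I would extract the coefficient of $z^n$. Since $\UPF_0(\mathbf{x})=1$ and the constant term on the right is $1$, the $n=0$ case is immediate. For $n\geq 1$, comparing coefficients yields the recurrence
\[
\UPF_n(\mathbf{x})=\sum_{k=1}^n h_k\,\UPF_{n-k}(\mathbf{x}).
\]
So the whole theorem reduces to proving this single recurrence.

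To prove the recurrence, I would work directly from the established formula $\UPF_n(\mathbf{x})=\sum_{c\in\Comp(n)}h_c$ from the discussion preceding the statement. The key combinatorial observation is that a composition $c=(c_1,c_2,\dots,c_\ell)$ of $n$ is determined by its first part $k=c_1\in[n]$ together with the remaining composition $(c_2,\dots,c_\ell)\in\Comp(n-k)$ (where the empty composition is the unique composition of $0$). Splitting the sum over $\Comp(n)$ according to the value of the first part gives
\[
\sum_{c\in\Comp(n)}h_c=\sum_{k=1}^n\sum_{c'\in\Comp(n-k)}h_k\,h_{c'}=\sum_{k=1}^n h_k\sum_{c'\in\Comp(n-k)}h_{c'}=\sum_{k=1}^n h_k\,\UPF_{n-k}(\mathbf{x}),
\]
using $h_{(k,c_2,\dots,c_\ell)}=h_k h_{(c_2,\dots,c_\ell)}$ by the multiplicative definition of $h_c$. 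This is exactly the recurrence, completing the argument.

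I do not expect any serious obstacle here, since the result follows formally from the already-proven expansion $\UPF_n(\mathbf{x})=\sum_{c\in\Comp(n)}h_c$ plus the elementary first-part decomposition of compositions. The only point requiring a touch of care is the bookkeeping at the boundary: one must treat $\UPF_0(\mathbf{x})=1$ (matching the empty composition) correctly so that the $k=n$ term of the recurrence, which pairs $h_n$ with $\UPF_0(\mathbf{x})$, is accounted for. An alternative packaging, which I would mention as a remark, is to note that this recurrence is the exact symmetric-function analogue of the elementary derivation in Proof 2 of \Cref{eq:upf-inv-gen-func}: there the geometric-series expansion $\frac{1}{2-\exp_q(z)}=\sum_{k\geq 0}2^{-(k+1)}\exp_q(z)^k$ plays the role that $\frac{1}{2-H(z)}=\sum_{k\geq 0}2^{-(k+1)}H(z)^k$ would play here, with $h_c$ replacing the $q$-multinomial weights; either route establishes the claim.
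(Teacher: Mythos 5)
Your proposal is correct, but it takes a genuinely different route from the paper. The paper proves the identity by expanding the right-hand side directly: writing $\frac{1}{2-H(z)}=\sum_{k\geq 0}\frac{1}{2^{k+1}}H(z)^k$, interpreting the terms of $H(z)^k$ via weak compositions with $k$ parts, regrouping each weak composition by its underlying strong composition (a strong composition $c$ sits inside $\binom{\ell(c)+j}{j}$ weak compositions with $\ell(c)+j$ parts), and then collapsing the numerical series with the identity $\sum_{j\geq 0}\binom{\ell(c)+j}{j}\frac{1}{2^{\ell(c)+j+1}}=1$ — exactly the "alternative packaging" you sketch in your closing remark, mirroring Proof 2 of the inversion generating function for $\UPF_n(q)$. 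Your main argument instead clears the denominator and verifies the linear recurrence $\UPF_n(\mathbf{x})=\sum_{k=1}^n h_k\,\UPF_{n-k}(\mathbf{x})$ by peeling off the first part of a composition; both proofs rest on the same previously established expansion $\UPF_n(\mathbf{x})=\sum_{c\in\Comp(n)}h_c$. What your route buys is economy: the factor of $2$ is absorbed algebraically through $h_0=1$ (so $2\,\UPF_n-h_0\,\UPF_n=\UPF_n$), and you never need the weak-to-strong composition regrouping or the convergent binomial series, only the bijection $\Comp(n)\leftrightarrow\{(k,c'): k\in[n],\ c'\in\Comp(n-k)\}$ and multiplicativity of $h_c$. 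What the paper's route buys is stylistic uniformity with its earlier elementary proof of the $q$-analogue, where the same geometric-series-plus-binomial-identity mechanism is already in place. Your handling of the boundary case ($\UPF_0(\mathbf{x})=1$ matching the empty composition, and the $k=n$ term of the recurrence) is the right point of care and is done correctly.
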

  \begin{proof}The result follows by writing
  \[\frac{1}{2-H(z)}=\sum_{k\geq 0} \frac{1}
    {2^{k+1}}\lp \sum_{ n\geq 0}h_nz^n\rp ^{k}=\sum_{n\geq 0}z^n\sum_{c\in \Comp(n)}h_c\sum_{j\geq 0}\binom{\ell(c)+j}{j}\frac{1}{2^{\ell(c)+j+1}}
    \]
    and recalling that $\sum_{j\geq 0}\binom{\ell(c)+j}{j}\frac{1}{2^{\ell(c)+j+1}}=1$.
    \end{proof}

While setting $t=2$ makes sense for polynomials, it is not clear what the representation theoretic meaning is of this evaluation. In particular, we do not know how to derive \Cref{thm:UPF-sym-generating-function} from a known $\S_n$-representation on $\S_n$. 

Just like $\CC[\PF_n]$, $\CC[\UPF_n]$ is a graded $\S_n$-module, with the grading coming from the area function, the function
\[\UPF_n(\mathbf{x},t)\coloneqq\sum_{\alpha\in \UPF_n^\uparrow}t^{\area(\alpha)}h_{\con(\alpha)}\]
is the graded Frobenius image of $\CC[\UPF_n]$. This means that
\[\UPF_n(\mathbf{x},t)=\sum_{k=0}^{n-1} \ch\lp \CC\{\alpha\in \UPF_n\mid \area(\alpha)=k\}\rp t^k. \]
We can likewise get a nice expression for the generating function of $\UPF_n(\mathbf{x},t)$.

\begin{theorem}
The generating function of $\UPF_n(\mathbf{x},t)$ is
    \[\sum_{n\geq 0}\UPF_n(\mathbf{x},t) z^n=\frac{t}{[2]_t-H(tz)}\]
\end{theorem}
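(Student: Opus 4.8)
The plan is to reduce $\UPF_n(\mathbf{x},t)$ to a sum over compositions and then run the geometric-series manipulation from the proof of \Cref{thm:UPF-sym-generating-function}, now keeping track of the area grading. By \Cref{thm:upf_rearrangement}, the weakly increasing unit interval parking functions of length $n$ are indexed by compositions $c=(c_1,\dots,c_k)\in\Comp(n)$, where $c$ records the block sizes; the associated $\alpha_c\in\UPF_n^\uparrow$ satisfies $\con(\alpha_c)=c$ and has exactly $\ell(c)=k$ blocks. So the first step is to rewrite
\[\UPF_n(\mathbf{x},t)=\sum_{\alpha\in\UPF_n^\uparrow}t^{\area(\alpha)}h_{\con(\alpha)}=\sum_{c\in\Comp(n)}t^{\area(\alpha_c)}h_c,\]
and the whole theorem then rests on a clean formula for $\area(\alpha_c)$.

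The main obstacle, and the only new ingredient beyond \Cref{thm:UPF-sym-generating-function}, is evaluating $\area(\alpha_c)$. I would show that each block contributes its size minus one to the area. By \Cref{thm:upf_rearrangement}, a block of size $m$ occupying positions $p,p+1,\dots,p+m-1$ of $\alpha_c$ has entries $(p,p,p+1,\dots,p+m-2)$; its entry sum is $mp+\binom{m-1}{2}$, while the corresponding sum of positions is $mp+\binom{m}{2}$, so the block decreases the total by $\binom{m}{2}-\binom{m-1}{2}=m-1$. Summing over all $k$ blocks and using $\sum_j|\pi_j|=n$ yields
\[\area(\alpha_c)=\binom{n+1}{2}-\sum_{i=1}^n(\alpha_c)_i=\sum_{j=1}^{k}\bigl(|\pi_j|-1\bigr)=n-\ell(c).\]
Hence $\UPF_n(\mathbf{x},t)=\sum_{c\in\Comp(n)}t^{n-\ell(c)}h_c$, which at $t=1$ recovers $\sum_{c\in\Comp(n)}h_c=\UPF_n(\mathbf{x})$ from \Cref{thm:UPF-sym-generating-function}, a reassuring consistency check.

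With the combinatorics settled, the remaining step is a formal identity that is even cleaner than in \Cref{thm:UPF-sym-generating-function}, since only strong compositions appear. Writing $[2]_t=1+t$ and factoring $t$ out of the denominator, I would expand
\[\frac{t}{[2]_t-H(tz)}=\frac{1}{1-\tfrac{1}{t}\bigl(H(tz)-1\bigr)}=\sum_{k\geq0}\frac{1}{t^k}\bigl(H(tz)-1\bigr)^k,\]
which is a legitimate formal power series because $H(tz)-1=\sum_{m\geq1}h_mt^mz^m$ has no constant term. Since $\bigl(H(tz)-1\bigr)^k=\sum_{n\geq0}z^nt^n\sum_{c\in\Comp(n,k)}h_c$, extracting the coefficient of $z^n$ gives
\[[z^n]\,\frac{t}{[2]_t-H(tz)}=\sum_{k\geq0}\frac{1}{t^k}\,t^n\sum_{c\in\Comp(n,k)}h_c=\sum_{c\in\Comp(n)}t^{n-\ell(c)}h_c=\UPF_n(\mathbf{x},t),\]
which completes the proof. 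Everything thus hinges on the identity $\area(\alpha_c)=n-\ell(c)$; once that block-by-block area count is in place, the generating-function bookkeeping is routine.
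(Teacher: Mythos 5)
Your proposal is correct and takes essentially the same route as the paper: both expand $\frac{t}{[2]_t-H(tz)}$ as a geometric series in $\frac{H(tz)-1}{t}$ to obtain $\sum_{n\geq 0}z^n\sum_{c\in \Comp(n)}t^{n-\ell(c)}h_c$, and both reduce the theorem to the identity $\area(\alpha)=n-\ell(\con(\alpha))$ for $\alpha\in \UPF_n^\uparrow$, proved block by block via \Cref{thm:upf_rearrangement}. The only cosmetic difference is that you compute each block's contribution of $|\pi_j|-1$ by comparing the sum of its entries with the sum of its positions, whereas the paper phrases the same count in parking terms, noting that every car except the first in each block parks exactly one spot past its preference.
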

\begin{proof}
We first rewrite the right-hand side as \[\frac{t}{[2]_t-H(tz)}=\frac{t}{(t+1)-H(tz)}=\frac{1}{1-\frac{H(tz)-1}{t}}\]
and note that
\[H(tz)-1=\sum_{n\geq 1}h_nz^nt^n.\]
Then, we have \begin{align*}
    \frac{1}{1-\frac{H(tz)-1}{t}}&=\sum_{k\geq 0}\frac{1}{t^k}\lp \sum_{n\geq 1}h_n z^n t^n\rp ^k\\
    &=\sum_{n\geq 0}z^n \sum_{c\in \Comp(n)}t^{n-\ell(c)}  h_c. \end{align*} 

Now we just have to show that \[\UPF_n(\mathbf{x},t) = \sum_{c\in \Comp(n)}t^{n-\ell(c)}h_c.\]
Indeed, observe that if $\alpha\in \UPF_n^\uparrow$ and $\con(\alpha)=(c_1,c_2,\dots,c_{n-k})$, then for each $i\in [n-k]$, cars $2+\sum_{j=1}^{i-1}c_j, 3+\sum_{j=1}^{i-1}c_j, \dots, c_i+\sum_{j=1}^{i-1}c_j$ each park exactly 1 spot away from their preference, and thus contribute $c_i-1$ in total to the area. Thus, $\area(\alpha)=c_1+c_2+\dots+c_{n-k}-(n-k)=k$, and so we have the desired expression.
\end{proof}

\section{Expectations of \texorpdfstring{$k$}{k}-transitive statistics}\label{sec:expectations}

In this section, we continue our discussion on $\S_n$-invariant sets of words by connecting the total number of inversions on the set to the total number of descents in $\S_n$-invariant set of words. The parking function motivation comes from Schumacher's \cite[Theorem 10]{bib:DescentsInPF} which establishes 
\begin{equation}\label{eq: Schumacker descents}
    \sum_{\alpha\in \PF_n} \des(\alpha)=\binom{n}{2}(n+1)^{n-2}.
\end{equation}
Using probabilistic techniques,
we obtain an analogous formula for inversions on parking functions, which we state and prove in \Cref{thm:total-number-of-inversions-in-parking-functions}. Moreover, we extend this to a larger class of statistics on $\S_n$-invariant set of words, namely those defined via \textit{$k$-transitive functions}. 
 
For a random variable $f:\Omega \to \R$ on a finite set $\Omega$, define the \textbf{expectation} of $f$ with respect to the uniform measure on $\Omega$ to be \begin{equation}\label{def:expectation}\E_\Omega[f]=\frac{1}{|\Omega|}\sum_{x\in \Omega}f(x)=\sum_{y\in \R} \P_\Omega(f=y) \cdot y,
\end{equation}
where we denote the discrete uniform probability distribution by 
\[\P_\Omega(f=y)=\frac{|\{x\in \Omega\mid f(x)=y\}|}{|\Omega|}.\]
For further definitions and background in probability, see \cite{Durrett_2019,FellerProbability, ProbabilityGut}. When we know the size of a set and our random variables are chosen appropriately, computing the expectation of a statistic \eqref{def:expectation} is equivalent to computing the sum of the statistic over the elements of the set.
We use the language of expected value to clarify combinatorial arguments which would otherwise be much more cumbersome. Linearity of expectation and symmetry often lie at the heart of these arguments. In the same way that descents are inversions that happen next to each other, we can consider `adjacent' and `non-adjacent' versions of other statistics. In \Cref{thm:k-transitive-sn-invariant-inv-and-sum}, we prove a general result relating expectations (and therefore counts) of such pairs of statistics. Our first application is to obtain the total number of inversions of parking functions from Schumacher's count of the total number of descents. In \Cref{sec:applications-of-linearity-of-expectation}, we explore numerous other applications of this.

For positive integers $k\leq n$, define the set \[I_{n,k}=\{(i_1,i_2,\dots,i_k)\mid 1\leq i_1<i_2<\cdots<i_k\leq n\}.\] A function $\chi:\N^n\times I_{n,k}\to \R $ is said to be \defterm{$k$-transitive} provided there is a function $\theta:\N^k\to \R$ such that 
\begin{equation}
    \chi(w,(i_1,i_2\dots,i_k))=\theta (w_{i_1},w_{i_2},\dots,w_{i_k})
\end{equation}
for each $w\in \N^n$ and $(i_1,i_2,\dots,i_k)\in I_{n,k}$. In other words, $\chi$ depends only on the ordered list of values $w_{i_1},w_{i_2},\dots,w_{i_k}$ and not on the positions. The name comes from the fact that the symmetric group $\S_n$ \textbf{acts $k$-transitively} on $[n]$ for any $k\in [n]$, meaning if $\textbf{p}=(i_1,i_2,\dots,i_k)$ and $\textbf{q}=(j_1,j_2,\dots,j_k)$ are a vector of distinct values of $[n]$, then there is a permutation $\sigma\in \S_n$ such that $\sigma(i_s)=j_s$ for all $s\in [k]$. Moreover, if $\chi$ is $k$-transitive, then 
\begin{equation}\label{eq:k-transitive-invariance-of-stat}\chi(w, \textbf{p})=\chi(\sigma\cdot w,\textbf{q})\end{equation} 
for any $w\in \N^n$.

Let $W$ be a finite subset of $\N^n$ and let $\chi$ be a $k$-transitive function. Define random variables on $W$ by setting 
\begin{equation}\label{eq:f_and_g_sub_chi}
f_\chi(w)=\sum_{\textbf{p}\in I_{n,k}}\chi(w,\textbf{p})\quad \text{and}\quad g_\chi(w)=\sum_{i=0}^{n-k}\chi(w,(1+i,2+i,\dots,k+i))
\end{equation}
for all $w\in W$. 
We start by giving an example of a transitive function that both serves as a motivation as well as being of interest in what follows.
\begin{example}
Let $W$ be a finite subset of $\N^n$. For $w\in W$ and integers $1\leq i<j\leq n$, define
\[\chi^{\inv}(w,(i,j))=\begin{cases}
    1& w_i>w_j\\
    0&w_i\leq w_j.
\end{cases}\]
We can see that $\chi^{\inv}$ is a $2$-transitive function with $\theta(a,b)=1$ if $a>b$,  and $0$ if $a\leq b$. 
In the language of probability, $\chi^{\inv}$ is a classical example of an indicator random variable. 
Moreover, $f_{\chi^{\inv}}$ is the classical inversion statistic $\inv$, while $g_{\chi^{\inv}}$ is the classical descent statistic $\des$. 

For $W=\PF_n$, we can rephrase \eqref{eq: Schumacker descents} as a statement about the expectation $\E_{\PF_n}\left[\des\right] =\E_{\PF_n}\left[g_{\chi^{\inv}}\right]$  of the random variable $g_{\chi^{\inv}}$ on $\PF_n$. 
Analogously,  we want to compute the expectation $\E_{\PF_n}\left[\inv\right]$ of $\inv=f_{\chi^{\inv}}$. 
Note that the probabilistic interpretation of these computations is to determine the expected number of decents (or inversions) in a uniformly random parking function of length $n$.
\end{example}

For a $k$-transitive function $\chi$, define a restriction
\[\chi_1(w)\coloneqq\chi(w,(1,2,\dots,k))\]
for $w\in \N^n$. For instance, if $\chi=\chi^{\inv}$, then $\chi^{\inv}_1(w)=1$ if $w$ has a descent in position 1 and is 0 otherwise.

\begin{theorem}\label{thm:k-transitive-sn-invariant-inv-and-sum}
    Let $k\leq n$ be positive integers.
    Let $\chi:\N^n\times I_{n,k}\to \R$ be a $k$-transitive function and let $W\subseteq \N^n$ be an $\S_n$-invariant set of words. With $f_{\chi}$ and $g_{\chi}$ as in \Cref{eq:f_and_g_sub_chi}, we have
\begin{enumerate}[(a)]
    \item $\E_W[f_\chi]=\binom{n}{k}\E_W[\chi_1]$,
    \item $\E_W[g_\chi]=(n-k+1)\E_W[\chi_1]$, and
    \item $\E_W[f_\chi]=\frac{1}{k}\binom{n}{k-1}\E_W[g_\chi]$.
\end{enumerate}
\end{theorem}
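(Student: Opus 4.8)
The plan is to prove the three identities by exploiting the $\S_n$-invariance of $W$ together with the $k$-transitivity of $\chi$, reducing everything to the single restricted quantity $\E_W[\chi_1]$. The central observation I would establish first is a \emph{symmetry lemma}: for any fixed $\mathbf{p}=(i_1,\dots,i_k)\in I_{n,k}$, the random variable $w\mapsto \chi(w,\mathbf{p})$ has the same expectation over $W$ as $\chi_1=\chi(\,\cdot\,,(1,2,\dots,k))$. This follows from \eqref{eq:k-transitive-invariance-of-stat}: choosing a permutation $\sigma\in\S_n$ with $\sigma(s)=i_s$ for $s\in[k]$, we have $\chi(w,(1,\dots,k))=\chi(\sigma\cdot w,\mathbf{p})$, and since $w\mapsto \sigma\cdot w$ is a bijection of $W$ (by $\S_n$-invariance) that preserves the uniform measure, summing over $w\in W$ gives $\sum_{w\in W}\chi(w,(1,\dots,k))=\sum_{w\in W}\chi(\sigma\cdot w,\mathbf{p})=\sum_{w'\in W}\chi(w',\mathbf{p})$. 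Dividing by $|W|$ yields $\E_W[\chi(\,\cdot\,,\mathbf{p})]=\E_W[\chi_1]$, independent of $\mathbf{p}$.

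Granting this lemma, parts (a) and (b) are immediate applications of linearity of expectation. For (a), I would write $\E_W[f_\chi]=\E_W\bigl[\sum_{\mathbf{p}\in I_{n,k}}\chi(\,\cdot\,,\mathbf{p})\bigr]=\sum_{\mathbf{p}\in I_{n,k}}\E_W[\chi(\,\cdot\,,\mathbf{p})]=|I_{n,k}|\cdot\E_W[\chi_1]=\binom{n}{k}\E_W[\chi_1]$, using $|I_{n,k}|=\binom{n}{k}$. For (b), the sum defining $g_\chi$ ranges over the $n-k+1$ consecutive windows $(1+i,\dots,k+i)$ for $i=0,\dots,n-k$, each of which is a valid element of $I_{n,k}$, so the same linearity-plus-symmetry computation gives $\E_W[g_\chi]=(n-k+1)\E_W[\chi_1]$.

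Part (c) is then a purely formal consequence: from (a) and (b), provided $\E_W[\chi_1]$ is finite (which it is, as a finite average), I can solve both for $\E_W[\chi_1]$ and equate. Explicitly, $\E_W[\chi_1]=\E_W[g_\chi]/(n-k+1)$, and substituting into (a) gives
\[
\E_W[f_\chi]=\binom{n}{k}\frac{\E_W[g_\chi]}{n-k+1}.
\]
It remains to check the binomial identity $\binom{n}{k}/(n-k+1)=\frac{1}{k}\binom{n}{k-1}$, which follows from $\binom{n}{k}=\frac{n-k+1}{k}\binom{n}{k-1}$, a standard rewriting of the factorial expressions. This yields $\E_W[f_\chi]=\frac{1}{k}\binom{n}{k-1}\E_W[g_\chi]$, as claimed.

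I expect the only genuine subtlety — the ``main obstacle'' — to be the symmetry lemma, and specifically the care needed to ensure that the permutation $\sigma$ acts as a measure-preserving bijection on $W$. This is exactly where $\S_n$-invariance of $W$ is essential: without it, $\sigma\cdot w$ might leave $W$ and the reindexing of the sum would fail. One should also note that $\sigma$ need only be specified on $\{1,\dots,k\}$ mapping to $\{i_1,\dots,i_k\}$ in order, with its action on the remaining indices arbitrary; since $\chi$ depends only on the $k$ selected coordinate values (by $k$-transitivity), the ambiguity in extending $\sigma$ is harmless. Everything else is routine linearity of expectation and a one-line binomial manipulation.
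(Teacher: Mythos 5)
Your proposal is correct and follows essentially the same route as the paper: both proofs use $\S_n$-invariance together with $k$-transitivity to show that $\E_W[\chi(\,\cdot\,,\mathbf{p})]=\E_W[\chi_1]$ for every $\mathbf{p}\in I_{n,k}$ (the paper phrases this as an equality of distributions, you as a measure-preserving change of variables, which is the same argument), and then parts (a)--(c) follow by linearity of expectation and the binomial identity $\binom{n}{k}/(n-k+1)=\frac{1}{k}\binom{n}{k-1}$. The only cosmetic difference is the direction of the permutation ($\sigma(s)=i_s$ in yours versus $\sigma(i_s)=s$ in the paper), which is immaterial.
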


\begin{proof}
    For a fixed element $\textbf{p}=(i_1,i_2,\dots,i_k)\in I_{n,k}$, fix $\sigma$ to be a permutation such that $\sigma(i_s)=s$ for each $s\in [k]$. Since $W$ is $\S_n$-invariant, for all $w\in W$ we have
    \[\sigma\cdot w=(w_{\sigma^{-1}(1)},w_{\sigma^{-1}(2)},\dots,w_{\sigma^{-1}(n)})\in W.\]
     Since $\chi$ is $k$-transitive, we can use  \Cref{eq:k-transitive-invariance-of-stat} with $\textbf{q}=(1,2,\dots,k)$ to get that for any $y\in \R$
    \[\P_W(\chi(w,\textbf{p})=y)=\P_W(\chi_1(\sigma \cdot w)=y)=\P_W(\chi_1(u)=y),\]
    where $\sigma$ is the permutation such that $\sigma(i_s)=s$ for all $s\in [k]$.
    Therefore, by linearity of expectation, we have 
    \begin{align}\label{eq:E(inv)1}
        \mathbb{E}_{W}\lb f_\chi\rb &=\sum_{\textbf{p}\in I_{n,k}}\mathbb{E}_{W}\lb \chi(w,\textbf{p})\rb=\sum_{\textbf{p}\in I_{n,k}}\E_W[\chi_1]=\binom{n}{k}\E_W[\chi_1],
    \end{align}
    since $|I_{n,k}|=\binom{n}{k}$, proving (a).

   For part (b), we proved in part (a) that $\P_W\lp \chi(w,(i,i+1,\dots,i+k-1)=y\rp=\P_W\lp \chi_1(w)=y\rp$ for all $i\in [n-k]$. 
   Hence, 
    \begin{equation*}\label{eq:E(des)1}
    \E_W\lb g_\chi\rb=\sum_{i=1}^{n-k+1}\E_W\lb 
    \chi(w, (i,i+1,\dots,i+k-1))\rb=(n-k+1)\E_W[\chi_1],
    \end{equation*}
    as desired.

    Finally, part (c) follows from combining (a) and (b).
\end{proof}

By selecting $\chi$ to be the 2-transitive function $\chi^{\inv}$ and by the definition of expectation applied to indicator random variables, we immediately obtain the following. 

\begin{corollary}\label{thm:inv-and-des-sums-for-words}
    Let $W\subset \N^{n}$ be an $\S_n$-invariant set of words of positive integers. 
    Then,
    \begin{enumerate}[label=(\alph*)]
        \item $\E_{W}\left[\inv\right]=\binom{n}{2}\mathbb{P}_{W}\left(\chi^{\inv}_1=1\right)$,
        \item $\E_{W}\left[\des\right]=(n-1)\P_{W}\left(\chi^{\inv}_1=1\right)$, and
        \item $\E_{W}\left[\inv\right]=\frac{n}{2}\E_{W}\left[\des\right]$.
    \end{enumerate}
\end{corollary}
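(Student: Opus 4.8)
The plan is to prove the three parts of \Cref{thm:inv-and-des-sums-for-words} as direct consequences of \Cref{thm:k-transitive-sn-invariant-inv-and-sum} applied to the specific $2$-transitive function $\chi^{\inv}$. First I would recall from the preceding example that $\chi^{\inv}$ is indeed $2$-transitive with $\theta(a,b)=1$ if $a>b$ and $0$ otherwise, and that under this choice $f_{\chi^{\inv}}=\inv$ and $g_{\chi^{\inv}}=\des$. This identification is the crux: once we know $\inv$ and $\des$ arise as the $f_\chi$ and $g_\chi$ of a single $k$-transitive function with $k=2$, all three formulas fall out by substituting $k=2$ into the general theorem.

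For part (a), I would set $k=2$ in part (a) of \Cref{thm:k-transitive-sn-invariant-inv-and-sum}, giving $\E_W[\inv]=\binom{n}{2}\E_W[\chi^{\inv}_1]$. The only remaining observation is that $\chi^{\inv}_1$ is an indicator random variable (it takes values in $\{0,1\}$), so its expectation equals the probability that it equals $1$; that is, $\E_W[\chi^{\inv}_1]=\P_W(\chi^{\inv}_1=1)$, which matches the definition of expectation in \eqref{def:expectation} restricted to a $\{0,1\}$-valued function. For part (b), I would similarly substitute $k=2$ into part (b) of the general theorem to get $\E_W[\des]=(n-2+1)\E_W[\chi^{\inv}_1]=(n-1)\P_W(\chi^{\inv}_1=1)$, again using that $\chi^{\inv}_1$ is an indicator.

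For part (c), the cleanest route is to substitute $k=2$ into part (c) of \Cref{thm:k-transitive-sn-invariant-inv-and-sum}, which yields
\[
\E_W[\inv]=\frac{1}{2}\binom{n}{1}\E_W[\des]=\frac{n}{2}\E_W[\des].
\]
Alternatively, one could divide the formula in (a) by that in (b) to eliminate the common factor $\P_W(\chi^{\inv}_1=1)$, obtaining $\E_W[\inv]/\E_W[\des]=\binom{n}{2}/(n-1)=n/2$ whenever the denominator is nonzero, but invoking part (c) of the general theorem avoids any case distinction about vanishing probabilities.

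I do not anticipate a substantive obstacle here, since the entire content is the specialization $k=2$ together with the indicator-expectation identity. The only point requiring a sentence of care is ensuring the indicator interpretation: because $\chi^{\inv}$ takes values in $\{0,1\}$, expectation and probability coincide, which is exactly what lets us rewrite $\E_W[\chi^{\inv}_1]$ as $\P_W(\chi^{\inv}_1=1)$ in parts (a) and (b). The genuine mathematical work has already been carried out in the proof of \Cref{thm:k-transitive-sn-invariant-inv-and-sum}, so this corollary is purely a matter of unwinding definitions.
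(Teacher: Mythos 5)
Your proposal is correct and matches the paper's own treatment exactly: the paper obtains this corollary by selecting $\chi=\chi^{\inv}$ in \Cref{thm:k-transitive-sn-invariant-inv-and-sum} with $k=2$ and invoking the definition of expectation for indicator random variables, which is precisely your argument. Nothing further is needed.
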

Note that part (c) of \Cref{thm:inv-and-des-sums-for-words} is equivalent to  \begin{equation}\sum_{w\in W}\inv(w)=\frac{n}{2}\sum_{w\in W}\des(w).\end{equation}
From this, we can therefore derive the desired analogue of Schumacher's result \eqref{eq: Schumacker descents} for inversions since $\PF_n$ is $\S_n$-invariant.
\begin{corollary}\label{thm:total-number-of-inversions-in-parking-functions}
    For $n\geq 1$, the total number of inversions across all parking functions is
    \[\sum_{\alpha\in \PF_n}\inv(\alpha)=\frac{n(n+1)^{n-2}}{2}\binom{n}{2}.\]
\end{corollary}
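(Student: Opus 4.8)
The plan is to recognize this as an immediate consequence of two results already established in the excerpt: part (c) of \Cref{thm:inv-and-des-sums-for-words}, which asserts that $\sum_{w\in W}\inv(w)=\frac{n}{2}\sum_{w\in W}\des(w)$ for any $\S_n$-invariant set of words $W\subseteq\N^n$, together with Schumacher's descent count \eqref{eq: Schumacker descents}, namely $\sum_{\alpha\in \PF_n}\des(\alpha)=\binom{n}{2}(n+1)^{n-2}$.

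First I would check the only genuine hypothesis needed, that $\PF_n$ is $\S_n$-invariant. This is standard: whether a preference list $\alpha$ is a parking function depends only on its content (the multiset of preferred spots), since the parking outcome is unaffected by the order in which cars are listed. Concretely, $\alpha\in\PF_n$ if and only if its weakly increasing rearrangement $\alpha^\uparrow=(a'_1,\dots,a'_n)$ satisfies $a'_i\le i$ for all $i$, a condition invariant under the coordinate-permuting action $\sigma\cdot\alpha$. Hence for every $\sigma\in\S_n$ and $\alpha\in\PF_n$ we have $\sigma\cdot\alpha\in\PF_n$, so $\PF_n$ is $\S_n$-invariant and the corollary applies with $W=\PF_n$.

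Then I would simply combine the two facts. Applying \Cref{thm:inv-and-des-sums-for-words}(c) with $W=\PF_n$ gives
\[
\sum_{\alpha\in \PF_n}\inv(\alpha)=\frac{n}{2}\sum_{\alpha\in \PF_n}\des(\alpha),
\]
and substituting Schumacher's value \eqref{eq: Schumacker descents} for the right-hand sum yields
\[
\sum_{\alpha\in \PF_n}\inv(\alpha)=\frac{n}{2}\cdot\binom{n}{2}(n+1)^{n-2}=\frac{n(n+1)^{n-2}}{2}\binom{n}{2},
\]
as claimed.

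There is no substantive obstacle here; the real content lies upstream in \Cref{thm:k-transitive-sn-invariant-inv-and-sum} (the linearity-of-expectation and $k$-transitivity argument that produces the factor $\frac{n}{2}=\frac1k\binom{n}{k-1}$ at $k=2$) and in Schumacher's external count of descents. The only point requiring a line of justification is the $\S_n$-invariance of $\PF_n$, which is routine. Thus the proof is a two-step citation-and-substitution.
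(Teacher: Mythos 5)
Your proposal is correct and follows exactly the paper's own route: the paper derives this corollary by applying \Cref{thm:inv-and-des-sums-for-words}(c) with $W=\PF_n$ and substituting Schumacher's descent count \eqref{eq: Schumacker descents}, noting only that $\PF_n$ is $\S_n$-invariant. Your extra paragraph justifying the $\S_n$-invariance (via the content/increasing-rearrangement characterization) is a sound elaboration of a fact the paper takes for granted.
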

Let $\Fub_n=|\C_n|$ be the $n$-th Fubini number defined by
\[\Fub_n=\sum _{{k=0}}^{n}\sum _{{j=0}}^{k}(-1)^{{k-j}}{\binom  {k}{j}}j^{n}.\]
The sequence of Fubini numbers is OEIS sequence \seqnum{A000670}. We obtain similar results for unit interval parking functions.
\begin{corollary}\label{thm:total-num-descent-in-cayley-permutations}
    Let $n\geq 1$.
    \begin{enumerate}
    \item The total number of descents across all unit interval parking functions, (or equivalently all Cayley permutations) is 
    \[\sum_{\alpha\in \UPF_n}\des(\alpha)=\frac{n-1}{2}(\Fub_n-\Fub_{n-1}). \]
   \item The total number of inversions across all unit interval parking functions, (or equivalently all Cayley permutations) is
   \[\sum_{\alpha\in \UPF_n}\inv(\alpha)=\frac{n(n-1)}{4}(\Fub_n-\Fub_{n-1}). \]
   \end{enumerate}
\end{corollary}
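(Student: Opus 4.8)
The plan is to transport everything to Cayley permutations via the inversion-preserving bijection $\psi$ of \Cref{thm:bijection upf and cayley} and then invoke the general machinery of \Cref{thm:inv-and-des-sums-for-words}. Since $\psi$ satisfies $\Inv(\psi(\alpha))=\Inv(\alpha)$, it preserves both $\inv$ and $\des$ (the latter being inversions occurring in adjacent positions), so the totals $\sum_{\alpha\in\UPF_n}\des(\alpha)$ and $\sum_{\alpha\in\UPF_n}\inv(\alpha)$ equal the corresponding totals over $\C_n$. The set $\C_n\subseteq[n]^n$ is $\S_n$-invariant because membership in $\C_n$ depends only on the content of a word, which is unchanged by permuting positions. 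Hence \Cref{thm:inv-and-des-sums-for-words} applies with $W=\C_n$, giving
\[\sum_{w\in\C_n}\des(w)=\Fub_n\cdot\E_{\C_n}[\des]=(n-1)\,\bigl|\{w\in\C_n:w_1>w_2\}\bigr|,\]
since $\E_{\C_n}[\des]=(n-1)\P_{\C_n}(\chi^{\inv}_1=1)$ and $\chi^{\inv}_1(w)=1$ precisely when $w_1>w_2$. Thus the whole problem reduces to counting Cayley permutations with a descent in position $1$.

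The next step is a symmetry argument. Swapping the first two entries is a content-preserving involution on $\C_n$ that interchanges the words with $w_1>w_2$ and those with $w_1<w_2$ while fixing those with $w_1=w_2$. Therefore
\[\bigl|\{w\in\C_n:w_1>w_2\}\bigr|=\tfrac12\bigl(\Fub_n-\bigl|\{w\in\C_n:w_1=w_2\}\bigr|\bigr),\]
so it remains only to compute the number of Cayley permutations whose first two entries agree.

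The main combinatorial content---and the step I expect to be the crux---is to show $\bigl|\{w\in\C_n:w_1=w_2\}\bigr|=\Fub_{n-1}$. Here I would use the standard dictionary between Cayley permutations of length $n$ and ordered set partitions of $[n]$, under which $w_1=w_2$ means that $1$ and $2$ lie in the same block. Gluing $1$ and $2$ into a single atom and relabeling yields a bijection between such ordered set partitions and all ordered set partitions of an $(n-1)$-element set, of which there are $\Fub_{n-1}$. One should verify that this gluing map is well defined and invertible, with inverse splitting the merged atom back into $1,2$ inside its block; this is routine but is the only place where real structure of $\C_n$ is used.

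Combining these gives $\bigl|\{w\in\C_n:w_1>w_2\}\bigr|=\tfrac12(\Fub_n-\Fub_{n-1})$, and hence
\[\sum_{\alpha\in\UPF_n}\des(\alpha)=(n-1)\cdot\tfrac12(\Fub_n-\Fub_{n-1})=\tfrac{n-1}{2}(\Fub_n-\Fub_{n-1}),\]
proving part (1). Part (2) then follows immediately from \Cref{thm:inv-and-des-sums-for-words}(c), namely $\sum_{\alpha}\inv(\alpha)=\tfrac{n}{2}\sum_{\alpha}\des(\alpha)$, which yields $\tfrac{n(n-1)}{4}(\Fub_n-\Fub_{n-1})$.
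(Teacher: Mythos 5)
Your proof is correct and follows essentially the same route as the paper: transport to $\C_n$ via $\psi$, use the swap-involution symmetry to reduce everything to counting Cayley permutations with $w_1=w_2$, show that this count is $\Fub_{n-1}$, and finish with parts (b) and (c) of \Cref{thm:inv-and-des-sums-for-words}. The only cosmetic difference is that the paper proves $|\{w\in\C_n \mid w_1=w_2\}|=\Fub_{n-1}$ by deleting the first letter (a bijection onto $\C_{n-1}$), whereas you glue the elements $1$ and $2$ in the ordered-set-partition picture---which is the same bijection viewed through the standard dictionary.
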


\begin{proof}
By \Cref{thm:bijection upf and cayley}, we can work with Cayley permutations. 

First observe that \[\P_{\C_n}(\chi_1^{\inv}=1)=\P_{\C_n}(\alpha_1>\alpha_2)=\P_{\C_n}(\alpha_1<\alpha_2)=\frac{1}{2}(1-\P_{\C_n}(\alpha_1=\alpha_2)).\]
The map \[\{\alpha\in \C_n\mid\alpha_1=\alpha_2\}\rightarrow \C_{n-1},\]
defined by $\alpha=(\alpha_1,\alpha_2,\alpha_3,\dots,\alpha_n)\mapsto(\alpha_2,\alpha_3,\dots,\alpha_n)$ is a bijection, so we have
\begin{equation}\label{eq:prop-of-tie-in-pos-1-in-upf}
\P_{\C_n}(\alpha_1=\alpha_2)=\frac{|\C_{n-1}|}{|\C_n|}=\frac{\Fub_{n-1}}{\Fub_n},
\end{equation}
and thus
\begin{equation}\label{eq:prop-of-des-in-pos-1-in-upf}
\E_{\C_n}(\chi_1^{\inv})=\P_{\C_n}(\chi_1^{\inv}=1)=\frac{\Fub_n-\Fub_{n-1}}{2\Fub_n}.\end{equation}
Applying  \Cref{thm:inv-and-des-sums-for-words} part (b) to \eqref{eq:prop-of-des-in-pos-1-in-upf} we get 
\begin{align}
    \sum_{\alpha\in \UPF_n}\des(\alpha) &=\Fub_n \E_{\C_n}[\des]\notag\\
    &=\Fub_n (n-1) \P_{\C_n}(\chi_1^{\inv}=1)\notag\\
    &=\frac{n-1}{2}(\Fub_n-\Fub_{n-1}).\label{eq:needed}
\end{align}
Applying \Cref{thm:inv-and-des-sums-for-words} part (c) to \eqref{eq:needed} yields
 \[\sum_{\alpha\in \UPF_n}\inv(\alpha)=\frac{n(n-1)}{4}(\Fub_n-\Fub_{n-1}). \qedhere\]
\end{proof}

\begin{remark}
    \Cref{thm:k-transitive-sn-invariant-inv-and-sum} is a statement about any finite $\S_n$-invariant set of words $W\subset \N^n$, so perhaps  it is of interest to view the result as a statement about the $\S_n$-module $\CC[W]$ or about its Frobenius image. 
    However, it is unclear at this time what the representation theoretic significance is.
\end{remark}

   The only property of $\S_n$ used in \Cref{thm:k-transitive-sn-invariant-inv-and-sum} is that it acts $k$-transitively on $[n]$.  Thus, we have the following generalization.
    \begin{proposition}\label{prop:2-transitive-G-invariant-words}
      Let $k\leq n$ be positive integers. Let $G$ be a group that acts $k$-transitively on $[n]$ and let $W\subset \N^{n}$ be a $G$-invariant set of words of positive integers. For any $k$-transitive function $\chi:\N^n\times I_{n,k}\to \R$, we have
\begin{enumerate}[(a)]
    \item $\E_W[f_\chi]=\binom{n}{k}\E_W(\chi_1)$,
    \item $\E_W[g_\chi]=(n-k+1)\E_W(\chi_1)$, and
    \item $\E_W[f_\chi]=\frac{1}{k}\binom{n}{k-1}\E_W(g_\chi)$.
\end{enumerate}
    \end{proposition}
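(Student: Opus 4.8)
The plan is to re-run the proof of \Cref{thm:k-transitive-sn-invariant-inv-and-sum} essentially verbatim, after observing that the only place the symmetric group entered that argument was through its $k$-transitivity on $[n]$ — exactly the property we now hypothesize directly for $G$. Concretely, the single structural fact used there was that for each window $\textbf{p}=(i_1,\dots,i_k)\in I_{n,k}$ one can select a group element sending $\textbf{p}$ to the standard tuple $(1,2,\dots,k)$; every subsequent manipulation was formal (linearity of expectation and counting windows). So the whole task reduces to checking that $k$-transitivity of $G$ supplies these relabeling elements and that the $G$-invariance of $W$ lets us use them inside the probabilities.

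First I would fix $\textbf{p}=(i_1,\dots,i_k)\in I_{n,k}$ and apply the definition of $k$-transitivity with $\textbf{q}=(1,2,\dots,k)$, a legitimate tuple of distinct elements of $[n]$, to produce $\sigma\in G$ with $\sigma(i_s)=s$ for all $s\in[k]$. Because $W$ is $G$-invariant, the map $w\mapsto \sigma\cdot w$ restricts to a self-bijection of $W$ (its inverse is $w\mapsto \sigma^{-1}\cdot w$, and $\sigma^{-1}\in G$), hence preserves the uniform measure on $W$. Since $\chi$ is $k$-transitive, \eqref{eq:k-transitive-invariance-of-stat} gives $\chi(w,\textbf{p})=\chi_1(\sigma\cdot w)$ for every $w\in W$, so the random variable $\chi(\cdot,\textbf{p})$ has the same distribution on $W$ as $\chi_1$. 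In particular $\P_W(\chi(w,\textbf{p})=y)=\P_W(\chi_1(u)=y)$ for all $y\in\R$, and therefore $\E_W[\chi(\cdot,\textbf{p})]=\E_W[\chi_1]$.

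With this distributional identity in hand, the three claims follow exactly as in \Cref{thm:k-transitive-sn-invariant-inv-and-sum}. For (a), summing over all $\binom{n}{k}$ windows $\textbf{p}\in I_{n,k}$ and using linearity of expectation yields $\E_W[f_\chi]=\binom{n}{k}\E_W[\chi_1]$. For (b), applying the identity to the $n-k+1$ consecutive windows $(1+i,\dots,k+i)$ for $i=0,\dots,n-k$ gives $\E_W[g_\chi]=(n-k+1)\E_W[\chi_1]$. Part (c) is then the algebraic consequence of eliminating $\E_W[\chi_1]$ between (a) and (b).

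I do not expect a genuine obstacle: the argument is structurally identical to the $\S_n$ case, and the substantive check is simply that the abstract $k$-transitivity hypothesis delivers the same relabeling elements that $\S_n$ did. The one point worth stating carefully is that these relabelings are applied inside $\P_W$, so one must confirm that $w\mapsto\sigma\cdot w$ is a measure-preserving self-bijection of $W$ — which is immediate from $G$-invariance — rather than merely a map into a larger ambient set.
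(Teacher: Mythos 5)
Your proposal is correct and takes essentially the same approach as the paper: the paper's entire justification for this proposition is the observation that the only property of $\S_n$ used in proving \Cref{thm:k-transitive-sn-invariant-inv-and-sum} is the $k$-transitivity of its action on $[n]$, so the argument transfers verbatim to any group $G$ acting $k$-transitively. Your explicit check that $w\mapsto\sigma\cdot w$ is a measure-preserving self-bijection of $W$ (using that $\sigma^{-1}\in G$) is exactly the use of $G$-invariance the paper makes implicitly.
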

    \begin{example}\label{ex:alternating_subgroup}    
    Let $\S_n^+$ denote the alternating subgroup of $\S_n$. Then using the work of D{\'e}sarm{\'e}nien and Foata \cite{desarmenien1992signed}, Fulman et al. in \cite[Eqn.~(7)]{fulman2021jointdistributiondescentssigns}) note that if $A_n(q)=\sum_{w\in \S_n}q^{\des(w)}$, then 
    \[\sum_{\pi\in \S_{2n}^{+}}q^{\des(\pi)}=2A_{2n}(q)+(1-q)^nA_{n}(q)\qquad \mbox{and} \qquad \sum_{\pi\in \S_{2n+1}^{+}}q^{\des(\pi)}=2A_{2n+1}(q)+(1-q)^nA_{n+1}(q).\]
    Considering both equations separately, we take a derivative with respect to $q$ of both sides and set $q=1$, to arrive at
\begin{equation}\label{eq:des_work}
\sum_{\pi\in\S_{2n}^{+}}\des(\pi)=2\sum_{\pi\in\mathfrak{S}_{2n}}\des(\pi) \qquad \mbox{and} \qquad \sum_{\pi\in\S_{2n+1}^{+}}\des(\pi)=2\sum_{\pi\in\mathfrak{S}_{2n+1}}\des(\pi).\end{equation}
We give more details on the history and counts of descents and inversions in permutations at the start of the next section, and use those counts along with the equations in \eqref{eq:des_work} to determine that

    \begin{equation}
        \sum_{\pi\in \S^+_{n}}\des(\pi)=\frac{1}{2}\sum_{\pi\in \S_{n}}\des(\pi)=\frac{n!(n-1)}{4}.
    \end{equation}
    Since the alternating group $\S_n^{+}$ acts 2-transitively on $[n]$ for $n\geq 4$, we can conclude by \Cref{prop:2-transitive-G-invariant-words} that 
    \begin{equation}
         \sum_{\pi\in \S^+_{n}}\inv(\pi) = \frac{n!n(n-1)}{8}.
    \end{equation}
    As a consequence, one can also see that total number of descents or inversions is the same in $\S_n\setminus \S_n^{+}$ as there is in $\S_n^+$. The equality of the total number of descents can be shown combinatorially via the involution of Wachs \cite{WACHS199259}.
\end{example}

\subsection{Applications}\label{sec:applications-of-linearity-of-expectation}
The method of enumerating statistics by way of computing the expectation of a carefully chosen $k$-transitive function has numerous applications. In particular, one can apply 
\Cref{thm:k-transitive-sn-invariant-inv-and-sum} straightforwardly in the following scenarios to obtain results, both old and new.

\subsubsection{Inversions and descents in permutations}
Counting the total number of inversions \cite[\seqnum{A001809}]{OEIS} is a very old problem from 1838 posed by Stern \cite{Stern_1838} and solved by Terquem \cite{Terquem1838}. 
The sequence enumerating the total number of descents is known as the \textit{Lah numbers} \cite[\seqnum{A001286}]{OEIS}. Note we used these results in \Cref{ex:alternating_subgroup}, now we prove them independently by using \Cref{thm:inv-and-des-sums-for-words}.
\begin{corollary}
    \label{thm:inversions-and-decents-in-permutations}The number of descents and inversions in $\S_n$ is given by 
    \[\sum_{w\in \S_n}\des(w)=\frac{n!(n-1)}{2}\quad \text{and}\quad \sum_{w\in \S_n}\inv(w)=\frac{n}{2}\frac{n!(n-1)}{2}=\frac{n!n(n-1)}{4},
    \]
    respectively.
\end{corollary}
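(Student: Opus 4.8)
The plan is to apply \Cref{thm:inv-and-des-sums-for-words} directly with $W=\S_n$, after observing that $\S_n$ is itself an $\S_n$-invariant set of words. Indeed, a permutation is a word in $[n]^n$ with distinct entries, and the place-permutation action $\sigma\cdot w=(w_{\sigma^{-1}(1)},\dots,w_{\sigma^{-1}(n)})$ sends permutations to permutations, so $\S_n$ satisfies the hypothesis of \Cref{thm:inv-and-des-sums-for-words}. Thus the entire $k$-transitive machinery applies with no extra work, and the whole task reduces to evaluating a single base probability.

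First I would compute $\P_{\S_n}(\chi^{\inv}_1=1)$, the probability that a uniformly random permutation $w$ has a descent in position $1$, i.e.\ that $w_1>w_2$. Since the entries of a permutation are distinct, we always have $w_1\neq w_2$, so $\P_{\S_n}(w_1>w_2)+\P_{\S_n}(w_1<w_2)=1$; and the involution $w\mapsto (1,2)\cdot w$ (swapping the values in positions $1$ and $2$) is a measure-preserving bijection of $\S_n$ interchanging the two events. Hence $\P_{\S_n}(\chi^{\inv}_1=1)=\tfrac12$. This is the only genuinely computational input, and it is where the content of the (otherwise classical) counts is concentrated.

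Feeding $\P_{\S_n}(\chi^{\inv}_1=1)=\tfrac12$ into part (b) of \Cref{thm:inv-and-des-sums-for-words} gives $\E_{\S_n}[\des]=(n-1)\cdot\tfrac12=\tfrac{n-1}{2}$, and multiplying by $|\S_n|=n!$ yields $\sum_{w\in\S_n}\des(w)=\tfrac{n!(n-1)}{2}$. For the inversions, part (c) gives $\E_{\S_n}[\inv]=\tfrac{n}{2}\,\E_{\S_n}[\des]=\tfrac{n(n-1)}{4}$, so multiplying again by $n!$ produces $\sum_{w\in\S_n}\inv(w)=\tfrac{n!\,n(n-1)}{4}$. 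Since $\S_n$ is $k$-transitive for all relevant $k$ and the indicator $\chi^{\inv}$ is $2$-transitive, every step is justified by the already-established theorem, and there is no real obstacle beyond confirming the hypotheses and the symmetry computation above.
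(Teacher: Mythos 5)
Your proposal is correct and follows essentially the same route as the paper's proof: both apply \Cref{thm:inv-and-des-sums-for-words} with $W=\S_n$ and $\chi=\chi^{\inv}$, use $\P_{\S_n}(\chi^{\inv}_1=1)=\tfrac12$, and then invoke parts (b) and (c) to obtain the descent and inversion totals. Your added justifications (verifying that $\S_n$ is $\S_n$-invariant and proving the $\tfrac12$ probability via the swap involution) only make explicit what the paper asserts without comment.
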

\begin{proof}
Recall that $\chi^{\inv}:\N^n\times I_{n,2}\to \{0,1\}$ is defined by 
\[\chi^{\inv}(w,(i,j))=\begin{cases}
    1&w_i>w_j\\
    0&w_i\leq w_j \eqcom\\
\end{cases}\]
and that this has $f_{\chi^{\inv}}=\inv$ and $g_{\chi^{\inv}}=\des$. 
For $w\in \S_n$, the probability that $\chi^{\inv}(w,(1,2))=1$ is exactly~$\frac12$. Hence, by \Cref{thm:inv-and-des-sums-for-words} part (b), we have that 
\[\E_{\S_n}[\des]=(n-1)\frac{1}{2} \eqpd \]
Thus, we have
\[\sum_{w\in \S_n}\des(w)=\frac{n!(n-1)}{2}.\]
In an analogous fashion, 
applying  \Cref{thm:inv-and-des-sums-for-words} part (c) yields
\[\sum_{w\in \S_n}\inv(w)=\frac{n}{2}\frac{n!(n-1)}{2}=\frac{n!n(n-1)}{4}.\]
\end{proof}

\subsubsection{Ties in words} A \textbf{tie} in a word $w\in \N^n$ is a position $i$ such that $w_i=w_{i+1}$. The number of ties of $w$ is denoted $\tie(w)$. We have the following application of the aforementioned technique. 

\begin{corollary}
    \label{thm:ties-for-general-words}
    Let $w$ be a word with content $(c_1 , c_2, \dots, c_\ell)$. Then 
    \[\sum_{u\in \S_n\cdot w}\tie(u)=\frac{2}{n}\sum_{i=1}^\ell \binom{n}{c_1,c_2,\dots,c_{i-1},c_i-2,2,c_{i+1},\dots,c_\ell}\eqpd \]
\end{corollary}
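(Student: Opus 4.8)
The plan is to realize $\tie$ as the adjacent statistic $g_\chi$ of a suitable $2$-transitive function and then apply \Cref{thm:k-transitive-sn-invariant-inv-and-sum}(b) to the orbit $W=\S_n\cdot w$, which is $\S_n$-invariant simply because it is a single orbit. First I would define the $2$-transitive function $\chi^{\tie}\colon \N^n\times I_{n,2}\to \R$ by setting $\chi^{\tie}(u,(i,j))=1$ if $u_i=u_j$ and $0$ otherwise, so that $\theta(a,b)=1$ exactly when $a=b$. With this choice, $g_{\chi^{\tie}}(u)=\sum_{i=0}^{n-2}\chi^{\tie}(u,(1+i,2+i))$ counts precisely the adjacent equalities of $u$, so $g_{\chi^{\tie}}=\tie$, and $\chi_1^{\tie}(u)$ is the indicator that $u_1=u_2$. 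Then \Cref{thm:k-transitive-sn-invariant-inv-and-sum}(b) with $k=2$ gives $\E_W[\tie]=(n-1)\E_W[\chi_1^{\tie}]=(n-1)\,\P_W(u_1=u_2)$.

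Next I would compute the two quantities needed to convert this expectation into the desired sum. The orbit has size $|W|=\binom{n}{c_1,c_2,\dots,c_\ell}$, the number of distinct rearrangements of $w$. For the probability, I would count rearrangements $u$ of $w$ with $u_1=u_2$ by conditioning on the common value: fixing it to be $i$ requires $c_i\geq 2$ and leaves a word of content $(c_1,\dots,c_i-2,\dots,c_\ell)$ to be placed in the remaining $n-2$ positions, yielding $\binom{n-2}{c_1,\dots,c_i-2,\dots,c_\ell}$ such rearrangements (with the usual convention that this is $0$ when $c_i<2$, so those degenerate terms vanish automatically). Summing over $i$ and dividing by $|W|$ gives $\P_W(u_1=u_2)$, and multiplying $\E_W[\tie]$ through by $|W|$ produces $\sum_{u\in W}\tie(u)=(n-1)\sum_{i=1}^\ell\binom{n-2}{c_1,\dots,c_i-2,\dots,c_\ell}$.

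Finally I would reconcile this intermediate form with the stated closed form through the elementary factorial identity $\binom{n}{c_1,\dots,c_{i-1},c_i-2,2,c_{i+1},\dots,c_\ell}=\binom{n}{2}\binom{n-2}{c_1,\dots,c_i-2,\dots,c_\ell}$, which follows by writing out both multinomials and noting that their ratio is $\tfrac{n!}{2!\,(n-2)!}=\binom{n}{2}$. Since $(n-1)=\tfrac{2}{n}\binom{n}{2}$, each summand $(n-1)\binom{n-2}{\dots}$ equals $\tfrac{2}{n}\binom{n}{\dots,c_i-2,2,\dots}$, which gives exactly the claimed expression after summing over $i$.

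I do not expect a genuine obstacle here, since the whole argument is a direct instance of the general machinery already established. The only steps requiring care are the bookkeeping of the content after removing two copies of the value $i$ (and checking that the $c_i<2$ terms drop out under the zero-multinomial convention) and the factorial identity relating the two multinomial coefficients; both are routine once the function $\chi^{\tie}$ is correctly set up as the equality indicator.
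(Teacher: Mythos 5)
Your proof is correct, and it runs on the same engine as the paper's: the equality-indicator function $\chi^{\tie}$, the identification $\tie=g_{\chi^{\tie}}$, and \Cref{thm:k-transitive-sn-invariant-inv-and-sum} applied to the orbit $W=\S_n\cdot w$. The difference is which clause of that theorem you invoke and which auxiliary quantity you compute. The paper uses part (c): it observes that $f_{\chi^{\tie}}(u)=\binom{c_1}{2}+\binom{c_2}{2}+\cdots+\binom{c_\ell}{2}$ depends only on the content and hence is constant on the orbit, so
\[
\sum_{u\in W}f_{\chi^{\tie}}(u)=\binom{n}{c_1,c_2,\dots,c_\ell}\sum_{i=1}^{\ell}\binom{c_i}{2}=\sum_{i=1}^{\ell}\binom{n}{c_1,\dots,c_{i-1},c_i-2,2,c_{i+1},\dots,c_\ell},
\]
the last step being essentially the same factorial identity you use; then $\sum_{u\in W}\tie(u)=\frac{2}{n}\sum_{u\in W}f_{\chi^{\tie}}(u)$ finishes the argument. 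You instead use part (b), computing $\P_W(u_1=u_2)$ by directly enumerating the orbit elements whose first two letters agree, arriving at $(n-1)\sum_{i}\binom{n-2}{c_1,\dots,c_i-2,\dots,c_\ell}$, and then reconciling with the stated form via $\binom{n}{c_1,\dots,c_i-2,2,\dots,c_\ell}=\binom{n}{2}\binom{n-2}{c_1,\dots,c_i-2,\dots,c_\ell}$ and $(n-1)=\frac{2}{n}\binom{n}{2}$. The two routes are equivalent double counts: your enumeration of words with a tie in positions $(1,2)$ is the transpose of the paper's count of equal pairs inside a fixed word. The paper's version is slightly shorter because the target multinomial appears in a single step from the content formula, while yours has the mild advantage of not requiring the observation that $f_{\chi^{\tie}}$ is constant on orbits, at the cost of the final bookkeeping identity.
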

\begin{proof}
Define \[\chi^{\tie}(w,(i,j))=\begin{cases}
    1&w_i=w_j\\
    0&w_i\neq w_j.
\end{cases}\]
Then, $\tie(w)=g_{\chi^{\tie}}(w)$. If $\con(w)=(c_1,c_2,\dots,c_\ell)$, then \[f_{\chi^{\tie}}(w)=\binom{c_1}{2}+\binom{c_2}{2}+\cdots+ \binom{c_\ell}{2}\] as this function simply selects pairs of numbers which are the same. If $W=\S_n \cdot w$, then $|W|=\binom{n}{c_1,c_2,\dots,c_\ell}$ and $f_{\chi^{\tie}}(u)=f_{\chi^{\tie}}(w)$ for all $u\in W$, so \[\sum_{u\in\S_n \cdot w }f_{\chi^{\tie}}(u)=\binom{n}{c_1,c_2,\dots,c_\ell}\left(\binom{c_1}{2}+\cdots +\binom{c_\ell}{2}\right)=\sum_{i=1}^\ell \binom{n}{c_1,c_2,\dots,c_{i-1},c_i-2,2,c_{i+1},\dots,c_\ell}.\]
Thus, the total number of ties is 
\[\sum_{u\in \S_n\cdot w}\tie(u)=\frac{2}{n}\sum_{u\in \S_n\cdot w}f_{\chi^{\tie}}(u)=\frac{2}{n}\sum_{i=1}^\ell \binom{n}{c_1,c_2,\dots,c_{i-1},c_i-2,2,c_{i+1},\dots,c_\ell}.\qedhere\]
\end{proof}
For some particular choices of $W$, \Cref{thm:ties-for-general-words} has interesting interpretations. In particular, we are able to derive nicer formulas for the total number of ties across all Cayley permutations (\Cref{prop:ties-in-cayley}) and across all parking functions (\Cref{cor:ties in parking functions}), leading us to wonder if there exist bijective proofs that the formulas we can derive from \Cref{thm:ties-for-general-words} are equal to these simpler expressions.

\begin{proposition}\label{prop:ties-in-cayley}
    The total number of ties across all Cayley permutations of length $n$ is
    \[\sum_{w\in \C_n}\tie(w)=(n-1)\Fub_{n-1}.\]
\end{proposition}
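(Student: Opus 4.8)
The plan is to bypass the general formula of \Cref{thm:ties-for-general-words} and instead apply the $k$-transitive machinery of \Cref{thm:k-transitive-sn-invariant-inv-and-sum} directly, which yields the clean closed form with almost no computation. The starting point is that $\C_n$ is an $\S_n$-invariant set of words: permuting the positions of a Cayley permutation does not change its content, so the rearrangement stays in $\C_n$. Hence the theorem applies with $k=2$ and $\chi=\chi^{\tie}$, the $2$-transitive indicator function with $\theta(a,b)=1$ if $a=b$ and $0$ otherwise, for which $\tie=g_{\chi^{\tie}}$ as already observed in the proof of \Cref{thm:ties-for-general-words}.

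First I would invoke part (b) of \Cref{thm:k-transitive-sn-invariant-inv-and-sum}, which gives
\[\E_{\C_n}[\tie]=\E_{\C_n}[g_{\chi^{\tie}}]=(n-1)\,\E_{\C_n}[\chi_1^{\tie}],\]
where $\chi_1^{\tie}(w)=1$ precisely when $w_1=w_2$. Because this is an indicator random variable, $\E_{\C_n}[\chi_1^{\tie}]=\P_{\C_n}(\alpha_1=\alpha_2)$, reducing the whole problem to computing a single tie-probability in position $1$.

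Next I would reuse the computation already recorded in \eqref{eq:prop-of-tie-in-pos-1-in-upf}: the deletion map $\alpha\mapsto(\alpha_2,\dots,\alpha_n)$ restricts to a bijection from $\{\alpha\in\C_n\mid\alpha_1=\alpha_2\}$ onto $\C_{n-1}$, since removing one of the two equal leading entries leaves every value that appeared still appearing, preserving the Cayley property. This gives
\[\P_{\C_n}(\alpha_1=\alpha_2)=\frac{|\C_{n-1}|}{|\C_n|}=\frac{\Fub_{n-1}}{\Fub_n}.\]

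Finally I would pass from the average back to the total by multiplying by $|\C_n|=\Fub_n$, obtaining
\[\sum_{w\in\C_n}\tie(w)=\Fub_n\cdot(n-1)\frac{\Fub_{n-1}}{\Fub_n}=(n-1)\Fub_{n-1},\]
where the factor $\Fub_n$ cancels to leave the stated formula. I do not expect any real obstacle here: the only points needing verification are the $\S_n$-invariance of $\C_n$ and the deletion bijection, both of which are routine, and all of the probabilistic substance is already packaged in \Cref{thm:k-transitive-sn-invariant-inv-and-sum} together with the position-$1$ tie probability established earlier. The mild subtlety worth flagging is simply to keep the roles of the average $\E_{\C_n}$ and the total sum distinct, so that the single multiplication by $\Fub_n$ is applied exactly once.
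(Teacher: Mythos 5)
Your proposal is correct and follows essentially the same route as the paper: the paper's own proof likewise rests on the position-$1$ tie probability $\P_{\C_n}(\alpha_1=\alpha_2)=\Fub_{n-1}/\Fub_n$ obtained from the deletion bijection (recorded as \eqref{eq:prop-of-tie-in-pos-1-in-upf}), with the factor $(n-1)$ coming from part (b) of \Cref{thm:k-transitive-sn-invariant-inv-and-sum} and the final multiplication by $|\C_n|=\Fub_n$. The only difference is presentational: the paper compresses all of this into a one-line citation, whereas you spell out the $\S_n$-invariance of $\C_n$, the bijection, and the expectation-to-total conversion explicitly.
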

\begin{proof}
      This follows from \Cref{eq:prop-of-tie-in-pos-1-in-upf}, which says that
    \[\E_{\C_n}[\chi_1^{\tie}]=\frac{|\C_{n-1}|}{|\C_n|}=\frac{\Fub_{n-1}}{\Fub_n},\]
    where $\Fub_n$ denotes the $n$-th Fubini number. 
\end{proof}

    Combining \Cref{prop:ties-in-cayley} with \Cref{thm:ties-for-general-words},  yields the identity 
    \[\frac{2}{n}\sum_{c\in\Comp(n)}\sum_{i=1}^\ell \binom{n}{c_1,c_2,\dots,c_{i-1},c_i-2,2,c_{i+1},\dots,c_\ell}=(n-1)\Fub_{n-1}\eqpd \]
    Finding a bijective proof of this fact remains an open problem; see \Cref{sec:future-work} for a formal statement.

    \begin{remark}        
    If $\Fub(z)=\sum_{n\geq 0}\Fub_{n}z^{n}/n!$, then recall that $\Fub(z)=\frac{1}{2-e^z}$ and thus
    \[\Fub(z)'=\sum_{n\geq 1}\Fub_{n}\frac{z^{n-1}}{(n-1)!}=\sum_{n\geq 2}(n-1)\Fub_{n-1}\frac{z^{n-2}}{(n-1)!}=\sum_{n\geq 1} \lp \sum_{\alpha\in\C_n}\tie(\alpha)\rp \frac{z^{n-2}}{(n-1)!}\]
    by \Cref{prop:ties-in-cayley}. Since $\Fub(z)=(2-e^z)^{-1}$, we have
    \[\sum_{n\geq 1} \lp \sum_{\alpha\in\C_n}\tie(\alpha)\rp \frac{z^{n-2}}{(n-1)!}=\lp\frac
    {1}{2-e^z}\rp=\frac{e^z}{(2-e^z)^2}.\]
    Hence,
    \[\sum_{n\geq 0}\lp \sum_{w\in \C_{n+1}}\tie(w)\rp\frac{z^n}{n!}=\frac{ze^z}{(2-e^z)^2}.\]
    \end{remark} 
      
   For parking functions, as with Cayley permutations, we can get a nice formula for the total number of ties without invoking \Cref{thm:ties-for-general-words}. 
    \begin{lemma}\label{prop:parktie}
    The number of parking functions of length $n$ having a tie in position 1 is given by \[(n+1)^{n-2} \eqpd \]
    \end{lemma}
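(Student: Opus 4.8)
The plan is to prove the equivalent probabilistic statement that $\P_{\PF_n}(\alpha_1=\alpha_2)=\frac{1}{n+1}$; since $|\PF_n|=(n+1)^{n-1}$, this immediately yields the count $(n+1)^{n-2}$. The cleanest route is through Pollak's circular parking argument (referenced via \cite{foata1974mappings} in the introduction). First I would recall the setup: place $n+1$ spots on a cycle indexed by $\Z/(n+1)\Z$, allow each of the $n$ cars a preference in $\{1,2,\dots,n+1\}$, and let each car drive forward cyclically to the first open spot. Every such preference sequence parks all $n$ cars and leaves exactly one spot empty, and a sequence is a genuine (linear) parking function precisely when the empty spot is $n+1$. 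In that case no preference can equal $n+1$ (if some car preferred $n+1$ and spot $n+1$ were empty at the end, then, since cars never vacate spots, spot $n+1$ would have been open when that car arrived and the car would have parked there), so such sequences lie in $[n]^n$ and agree with $\PF_n$.

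Next I would consider the set $S=\{\alpha\in(\Z/(n+1)\Z)^n : \alpha_1=\alpha_2\}$ of circular preference sequences satisfying the tie condition, which has $|S|=(n+1)^{n-1}$. The crucial point is that $S$ is stable under the diagonal shift action $\alpha\mapsto \alpha+c\cdot(1,1,\dots,1)$ for $c\in\Z/(n+1)\Z$, since shifting every coordinate by the same $c$ preserves the equality $\alpha_1=\alpha_2$. This action is free, so each orbit has size $n+1$. Because adding $c$ to all preferences rotates the entire circular configuration, it rotates the empty spot by $c$; hence as $c$ ranges over $\Z/(n+1)\Z$ the empty spot runs through all $n+1$ positions exactly once, so precisely one element of each orbit is a linear parking function. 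Therefore the number of parking functions in $S$ equals $|S|/(n+1)=(n+1)^{n-1}/(n+1)=(n+1)^{n-2}$, which is the desired count.

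The main obstacle is making the equivariance rigorous, namely verifying carefully that adding the constant $c$ to every preference cyclically rotates the outcome of the parking process, and hence the empty spot, by exactly $c$. This is the heart of Pollak's argument and must be stated precisely for the circular model; once it is in place, the freeness of the diagonal action and the invariance of the tie condition under it are both immediate, and the count follows from the orbit-counting argument above. I would keep the computation of $|S|=(n+1)^{n-1}$ as a routine step ($\alpha_1$ free, $\alpha_2$ determined, and $\alpha_3,\dots,\alpha_n$ free), so that essentially all the content is packaged into the standard symmetry lemma.
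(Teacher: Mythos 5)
Your proposal is correct and is essentially the paper's own argument: the paper also uses Pollak's circular model, defining the $(n+1)$-to-$1$ map $\psi:[n+1]^n\to\PF_n$ that subtracts the empty spot from every preference (whose fibers are exactly the orbits of your diagonal shift action), and then notes that $\psi(w)$ has a tie in position $1$ if and only if $w$ does, giving $(n+1)^{n-1}/(n+1)=(n+1)^{n-2}$. Your orbit-counting phrasing and the paper's quotient-map phrasing are the same proof.
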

   
        \begin{proof}
We begin by recalling the map of Pollak \cite{RIORDAN1969408}: Given a sequence $(w_1,w_2,\dots,w_n)\in [n+1]^n$, we can imagine cars parking on a circular street with $n+1$ spots with preference list $w$.  Since there are $n$ cars and $n+1$ spots, there is empty spot. If $i$ is the empty spot, set
    \[\alpha=(w_{1}-i,w_{2}-i,\dots,w_{n}-i),\]
    where all entries are taken modulo $n+1$. This defines a parking function $\alpha\in \PF_n$ and hence an $(n+1)$-to-1 map 
    $\psi:[n+1]^n\to \PF_n$ as proven in \cite{foata1974mappings}. Then we can see that $\psi(w)$ is a parking function with a tie in position 1 if and only if $w$ has a tie in position $1$. Since there are $(n+1)^{n-1}$ sequences $w\in [n+1]^n$ such that $w$ has a tie in position 1, and since $\psi$ is $(n+1)$-to-1, we have
    \[|\{\alpha\in \PF_n\mid \alpha_1=\alpha_2\}|=\frac{(n+1)^{n-1}}{n+1}=(n+1)^{n-2}.\qedhere\]
    \end{proof}
    As a consequence of \Cref{thm:k-transitive-sn-invariant-inv-and-sum}, we have the following.
    \begin{corollary}\label{cor:ties in parking functions}For $n\geq 1$,
            \[\sum_{\alpha\in \PF_n}\tie(\alpha)=(n-1)(n+1)^{n-2}.\]
    \end{corollary}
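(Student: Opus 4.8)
The plan is to recognize $\tie$ as the adjacent-pair version $g_{\chi^{\tie}}$ of the $2$-transitive indicator function $\chi^{\tie}$ introduced in the proof of \Cref{thm:ties-for-general-words}, and then apply \Cref{thm:k-transitive-sn-invariant-inv-and-sum}(b) to $W=\PF_n$. Since $\PF_n$ is $\S_n$-invariant, that theorem (with $k=2$) gives
\[\E_{\PF_n}[\tie]=\E_{\PF_n}[g_{\chi^{\tie}}]=(n-1)\,\E_{\PF_n}[\chi^{\tie}_1],\]
which reduces the global count to the single probability that a uniformly random parking function has a tie in its first position.

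Next I would evaluate this probability using \Cref{prop:parktie} together with the Konheim--Weiss count $|\PF_n|=(n+1)^{n-1}$. Because $\chi^{\tie}_1$ is an indicator, $\E_{\PF_n}[\chi^{\tie}_1]=\P_{\PF_n}(\chi^{\tie}_1=1)=\P_{\PF_n}(\alpha_1=\alpha_2)$, and \Cref{prop:parktie} states the number of such parking functions is $(n+1)^{n-2}$, so
\[\E_{\PF_n}[\chi^{\tie}_1]=\frac{(n+1)^{n-2}}{(n+1)^{n-1}}=\frac{1}{n+1}.\]

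Finally, converting the expectation back to a sum via $\sum_{\alpha\in \PF_n}\tie(\alpha)=|\PF_n|\cdot\E_{\PF_n}[\tie]$ yields
\[\sum_{\alpha\in \PF_n}\tie(\alpha)=(n+1)^{n-1}\cdot(n-1)\cdot\frac{1}{n+1}=(n-1)(n+1)^{n-2},\]
as claimed.

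I do not anticipate a genuine obstacle here: both inputs---the structural reduction from \Cref{thm:k-transitive-sn-invariant-inv-and-sum} and the position-$1$ count from \Cref{prop:parktie}---are already in hand, so the argument is essentially bookkeeping built on the probabilistic framework of \Cref{sec:expectations}. The one place to stay careful is keeping the roles of $f_{\chi}$ (a sum over \emph{all} pairs $(i,j)\in I_{n,2}$) and $g_{\chi}$ (a sum over only \emph{adjacent} pairs $(i,i+1)$) straight, since $\tie$ corresponds to $g_{\chi^{\tie}}$ rather than $f_{\chi^{\tie}}$; using the wrong statistic would replace the correct factor $(n-1)$ by $\binom{n}{2}$ and give the total number of repeated-value pairs instead of the number of adjacent ties.
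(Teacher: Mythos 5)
Your proposal is correct and follows exactly the route the paper intends: the paper states \Cref{cor:ties in parking functions} as an immediate consequence of \Cref{thm:k-transitive-sn-invariant-inv-and-sum}(b) applied to $\chi^{\tie}$ on the $\S_n$-invariant set $\PF_n$, with the position-$1$ tie count supplied by \Cref{prop:parktie} and the Konheim--Weiss count $|\PF_n|=(n+1)^{n-1}$. Your write-up simply makes this implicit bookkeeping explicit, including the correct identification of $\tie$ with $g_{\chi^{\tie}}$ rather than $f_{\chi^{\tie}}$.
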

    
    \begin{remark}\Cref{cor:ties in parking functions} can also be derived from \cite[Theorem 13]{bib:DescentsInPF} which says that if  $\PF_{(n,i)}$ is the set of parking functions with $i$ ties, then
    \[|\PF_{(n,i)}|=\binom{n-1}{i}n^{n-1-i}.\]
    Hence, we have
    \begin{align*}
        \sum_{\alpha\in \PF_n}\tie(\alpha)
        =\sum_{i=0}^{n-1}i\binom{n-1}{i}n^{n-1-i}
        =(n-1)\sum_{i=1}^{n-1}\binom{n-2}{i-1}n^{n-1-i}
        =(n-1)(n+1)^{n-2}, 
    \end{align*}
    where the final equality is an application of the binomial theorem.\end{remark}

    By Schumacher's result on the number of ties, we can write \Cref{cor:ties in parking functions} as
    \begin{equation}\label{eq:des_ties}\sum_{\alpha\in \PF_n}\des(\alpha)=\frac{n}{2}\sum_{\alpha\in \PF_n}\tie(\alpha).\end{equation}
    The equality in \eqref{eq:des_ties} does not hold in general for all $\S_n$-invariant sets of words. 
    For example, the  equality $\sum_{w\in W} \des(w)=\frac{n}{2}\sum_{w\in W}\tie(w)$ does not hold when $W=\S_n$.
    Thus, we can apply \Cref{thm:k-transitive-sn-invariant-inv-and-sum} to conclude that
    \[\sum_{\alpha\in \PF_n}\des(\alpha)=\sum_{\alpha\in \PF_n}f_{\chi^{\tie}}(\alpha).\]
    This indicates that there is a bijection
    \[\{(\alpha,i)\mid \alpha\in \PF_n, i\in \Des(\alpha)\}\to \{(\alpha,(r,s))\mid \alpha\in \PF_n, 1\leq r<s\leq n, \alpha_r=\alpha_s\}.\]
    We leave it as an open problem to find this bijection explicitly in \Cref{sec:future-work}.
    
 To get a formula for the number of ties across all parking functions using \Cref{thm:ties-for-general-words}, we recall that  parking functions can be described as the collection of words in $[n]^n$ whose content $\con(w)=(c_1,c_2,\dots,c_n)$ satisfies
    \begin{equation}\label{eq:Hess}
    i\leq c_1+c_2+\cdots+c_i\leq n,
    \end{equation}
    for all $i\in [n]$.
    Let $\mathrm{Hess}(n)$ denote the set of sequences $(c_1,c_2,\dots,c_n)$ of nonnegative integers satisfying \Cref{eq:Hess}.\footnote{The notation comes from literature of ``Hessenberg functions'' which are functions $h:[n]\to [n]$ satisfying $i\leq h(i)$ for all $i\in [n]$ and $h(i)\leq h(i+1)$ for all $i\in [n-1]$.
    These functions are a central object in the work on chromatic quasisymmetric functions of Shareshian and Wachs \cite{shareshian_chromatic_2016}.} 
    Then we may apply \Cref{thm:ties-for-general-words} and \ref{cor:ties in parking functions} to establish the following.
    \begin{corollary}\label{cor:mystery of parking function ties}
    For $n\geq 1$,
         \[\sum_{c\in \mathrm{Hess}(n)}\sum_{i=1}^\ell \binom{n}{c_1,c_2,\dots,c_{i-1},c_i-2,2,c_{i+1},\dots,c_\ell}=\binom{n}{2}(n+1)^{n-2}.\]
    \end{corollary}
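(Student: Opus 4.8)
The plan is to sum the per-orbit tie count from \Cref{thm:ties-for-general-words} over all rearrangement classes of parking functions and then match the total against the closed form in \Cref{cor:ties in parking functions}. This is fundamentally a two-line argument once the indexing set is identified, so the bulk of the work is organizing the orbit decomposition correctly.

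First I would observe that $\PF_n$ is closed under rearrangement, so it decomposes as a disjoint union of $\S_n$-orbits. By the content characterization recorded in \eqref{eq:Hess}, these orbits are indexed precisely by the vectors $c\in\mathrm{Hess}(n)$: every such $c$ is the content of some parking function, and two parking functions lie in the same orbit exactly when they share a content. Writing $w_c$ for a representative with $\con(w_c)=c$ and $\ell=\ell(c)$ for its number of parts, this gives $\PF_n=\bigsqcup_{c\in\mathrm{Hess}(n)}\S_n\cdot w_c$.

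Next I would apply \Cref{thm:ties-for-general-words} to each orbit $\S_n\cdot w_c$ and sum. Since the individual orbit totals are
\[\sum_{u\in\S_n\cdot w_c}\tie(u)=\frac{2}{n}\sum_{i=1}^{\ell}\binom{n}{c_1,\dots,c_{i-1},c_i-2,2,c_{i+1},\dots,c_\ell},\]
summing over all $c\in\mathrm{Hess}(n)$ and using the orbit decomposition collapses the left-hand side to $\sum_{\alpha\in\PF_n}\tie(\alpha)$, yielding
\[\sum_{\alpha\in\PF_n}\tie(\alpha)=\frac{2}{n}\sum_{c\in\mathrm{Hess}(n)}\sum_{i=1}^{\ell}\binom{n}{c_1,\dots,c_{i-1},c_i-2,2,c_{i+1},\dots,c_\ell}.\]

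Finally I would invoke \Cref{cor:ties in parking functions}, which evaluates the left-hand side as $(n-1)(n+1)^{n-2}$. Solving for the double sum multiplies through by $n/2$, and the elementary identity $\tfrac{n}{2}(n-1)=\binom{n}{2}$ turns the result into $\binom{n}{2}(n+1)^{n-2}$, which is the claim. There is no real obstacle here beyond the bookkeeping of the decomposition; the single point requiring care is the identification of the orbit set with $\mathrm{Hess}(n)$, which is exactly the assertion that \eqref{eq:Hess} characterizes parking-function contents, and everything downstream is direct substitution.
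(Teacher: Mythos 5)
Your proposal is correct and follows essentially the same route as the paper: the paper likewise treats $\PF_n$ as a disjoint union of $\S_n$-orbits indexed by contents in $\mathrm{Hess}(n)$, applies \Cref{thm:ties-for-general-words} orbit by orbit, and equates the resulting total with the closed form $(n-1)(n+1)^{n-2}$ from \Cref{cor:ties in parking functions}. The only detail to keep in mind (shared by both arguments) is that for $c\in\mathrm{Hess}(n)$ with some part $c_i<2$ the corresponding multinomial coefficient is interpreted as zero, so those terms drop out harmlessly.
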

    We again leave it as an open problem to find this bijection explicitly, and state this formally in \Cref{sec:future-work}.
\subsubsection{Descent tops and inversion tops}
Recall that the \textbf{major index} of a permutation $w$ is  $\maj(w)=\sum_{i\in \Des(w)}i$, or the sum of the indices where a decent occurs in $w$. This can be thought of as summing over descents weighted by the position they occur in. Instead of weighting by position, we can weight by its value and arrive at the \textbf{descent top statistic} defined by
\[\dtop(w)=\sum_{i\in \Des(w)}w_i.\]
For sequence data on the descent top statistic see \seqnum{A263753}. For a word $w\in \N^n$, define 
     \[\chi^{\dtop}(w,(i,j))=\begin{cases}
         w_i&w_i>w_{j}\\
         0&\text{else,}
     \end{cases}\]
     The function $\chi^{\dtop}$ is a $2$-transitive function and $\dtop=g_{\chi^{\dtop}}$. 
     We can also define the \textbf{inversion top statistic} as
     \[f_{\chi^{\dtop}}(w)=\itop(w)=\sum_{i<j}\chi^{\dtop}(w,(i,j)).\]
     For $r\in [n]$, the probability that $\chi^{\dtop}((w,(1,2))=r$ is $\frac{1}{n}\frac{r-1}{n-1}$ because you need to select a permutation with $w_1=r$ and $w_{2}<r$.      Hence, we have
     \begin{align*}   
      \E_{\S_n}[\chi_{1}^{\dtop}]&=\sum_{r=1}^n \frac{r-1}{n(n-1)}\cdot r\\
     &= \frac{1}{n(n-1)}\sum_{r=1}^n r^2-r\\&=\frac{1}{n(n-1)}\lp \frac{n(n+1)(2n+1)}{6}-\frac{n(n+1)}{2}\rp\\&=\frac{n+1}{3}.
     \end{align*}
     Thus, we have
     \begin{equation}\label{eq:ttl-num-of-wdes-winvs}\sum_{w\in \S_n}\dtop(w)=\frac{(n+1)!(n-1)}{3}\qquad \sum_{w\in \S_n}\itop(w)=\frac{(n+1)!n(n-1)}{6}=\binom{n+1}{3}n!.\end{equation}

\subsubsection{Graphical inversions}
Let $D$ be a directed graph on vertex set $[n]$ and directed edge set $E$, the elements of which we represent using arrows $i\to j\in E$. For a word $w\in [n]^n$, define 
\[\chi^{D}(w,(i,j))=\begin{cases}
    1&w_i\to w_j\in E\\
    0&w_i\to w_j\not\in E.
\end{cases}\]
If we define  $\theta(x,y)=1$ if $x\to y\in E$ and $0$ otherwise for $x,y\in [n]$, then $\chi^D(w,(i,j))=\theta(w_i,w_j)$ for all $w\in [n]^n$, so this is a $2$-transitive function. The function $f_{\chi^{D}}$ is the \textbf{graphical inversion statistic}, denoted  $\inv_D'$, studied by Foata and Zeilberger in \cite{FOATA199679}, where it was studied alongside the analogously defined \textit{graphical major index} We have
\begin{equation}\label{eq:Gmaj-chi1}\E_{\S_n}[\chi_1^D]=\frac{|E|}{n(n-1)}\end{equation}
since $\chi^D_1(w)=1$ if and only if $w_1\to w_2\in E$ for each $w\in \S_n$. Defining the \textbf{graphical descent statistic } as $\des_D'=g_{\chi^D}$, we have
\begin{equation}\label{eq:graphical-equations}
\sum_{w\in \S_n}\inv_D'(w)=n!\binom{n}{2}\frac{|E|}{n(n-1)}=\frac{n!|E|}{2}\quad \mbox{and}\quad\sum_{w\in \S_n}\des_D'(w)=n!(n-1)\frac{|E|}{n(n-1)}=(n-1)!|E|.
\end{equation}

by combining \Cref{eq:Gmaj-chi1} with \Cref{thm:k-transitive-sn-invariant-inv-and-sum}.

If we take $D$ to be the complete graph with edges of the form $i\to j$ whenever $i>j$, we have $\chi^D=\chi^{\inv}$ and $|E|=\binom{n}{2}$ is the number of edges, so \Cref{eq:graphical-equations} yield the formulas in \Cref{thm:inversions-and-decents-in-permutations}.

\subsubsection{Descents, big and small}
Instead of looking at all inversions, we can restrict our attention to inversions of a particular form. For a word $w\in \N^n$, indices $i$ and $j$  define
\[\chi^{\sdes}(w,(i,j)) =   \begin{cases}
    1 & w(i)=w(j)+1\\
    0 & \text{else.}
\end{cases}  \]
 
Then we define $\sdes= g_{\chi^{\sdes}}$
which counts the number of \textbf{small descents} of $w$, i.e., positions $i$ where $w(i)=w(i+1)+1$, and likewise $\sinv=f_{\chi^{\sdes}}$ counts \textbf{small inversions}, i.e., positions $i<j$ where $w(i)=w(j)+1$. Observe that if $\sigma$ is a permutation, then 
\begin{equation}\label{eq:sinv-equals-ides}\sinv(\sigma)=\des(\sigma^{-1})\end{equation}
and so 
\begin{equation}\label{eq:sum-sinv-equals-ides}\sum_{\sigma\in \S_n}\sinv(\sigma)=\sum_{\sigma\in \S_n}\des(\sigma)=\frac{n!(n-1)}{2}.\end{equation}
One advantage of using $\sinv(\sigma)$ over $\des(\sigma^{-1})$ is that the former is more easily generalizable to words. 

We can also define \textbf{big descents} and \textbf{big inversions} as $\bdes=g_{\chi^{\bdes}}$ and $\binv=f_{\chi^{\bdes}}$, respectively, where
\[\chi^{\bdes}(w,(i,j)) =   \begin{cases}
    1 & w(i)>w(j)+1\\
    0 & \text{else.}
\end{cases}  \]
for a given word $w\in \N^n$
and pair $1\leq i<j\leq n$. Note that $\chi^{\inv}=\chi^{\sdes}+\chi^{\bdes}$. 
Big descents correspond to $1$-descents of Foata and Schutzenberger \cite{FoaSch70}. These were more recently studied by Shareshian and Wachs in a quasisymmetric function generalization of Eulerian polynomials \cite{shareshian_eulerian_2010,shareshian_chromatic_2016} and by Elizalde, Rivera, Zhuang on sets of pattern avoiding permutations \cite{elizalde2024countingpatternavoidingpermutationsbig}. 
To compute expectations, note that a permutation starting with a small descent starts with a number $i$ bigger than $1$ and then $i-1$. Thus, it follows that 
\begin{equation}\label{eq:exp-of-1-sdes}\E_{\S_n}[\chi_1^{\sdes}]=\frac{(n-1)\cdot 1}{n(n-1)}=\frac{1}{n}=1-\E_{\S_n}[\chi_1^{\bdes}].\end{equation}
  Thus, combining \Cref{eq:exp-of-1-sdes} and \Cref{thm:k-transitive-sn-invariant-inv-and-sum} (or equivalently \Cref{eq:sum-sinv-equals-ides}, \Cref{thm:k-transitive-sn-invariant-inv-and-sum}, and \Cref{thm:inversions-and-decents-in-permutations}), we have the following.
  \begin{proposition} \label{prop:sdes_sinv_bdes_binvSN}
      Suppose $n\geq 1$.
      \begin{enumerate}
          \item $\sum_{\sigma\in \S_n}\sdes(w)=(n-1)(n-1)!$.
          \item $\sum_{\sigma\in \S_n}\sinv(w)=\binom{n}{2}(n-1)!=\frac{n!(n-1)}{2}$.
          \item $\sum_{\sigma\in \S_n}\bdes(w)=\frac{(n-1)(n-2)(n-1)!}{2}$.
          \item $\sum_{\sigma\in \S_n}\binv(w)=\frac{(n-1)(n-2) n!}{4}$.

      \end{enumerate}
  \end{proposition}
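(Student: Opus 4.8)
The plan is to recognize all four sums as instances of the $f_\chi$ and $g_\chi$ constructions for the two $2$-transitive indicator functions $\chi^{\sdes}$ and $\chi^{\bdes}$ introduced just before the statement, and then to apply \Cref{thm:k-transitive-sn-invariant-inv-and-sum} with $k=2$ and $W=\S_n$. Concretely, $\sinv=f_{\chi^{\sdes}}$ and $\sdes=g_{\chi^{\sdes}}$, while $\binv=f_{\chi^{\bdes}}$ and $\bdes=g_{\chi^{\bdes}}$; since $\S_n$ is $\S_n$-invariant, parts (a) and (b) of the theorem express each of the four expectations as an explicit multiple of a single-position expectation, either $\E_{\S_n}[\chi_1^{\sdes}]$ or $\E_{\S_n}[\chi_1^{\bdes}]$. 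Once those two base expectations are known, I would obtain each total by multiplying the corresponding expectation by $|\S_n|=n!$.

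The only genuine computation is this pair of base expectations. For $\chi^{\sdes}$ the value is supplied by \eqref{eq:exp-of-1-sdes}, namely $\E_{\S_n}[\chi_1^{\sdes}]=\tfrac1n$, since a permutation has a small descent in position $1$ exactly when $w_1=w_2+1$, which holds for $(n-1)(n-2)!$ of the $n!$ permutations. For $\chi^{\bdes}$ I would use the decomposition $\chi^{\inv}=\chi^{\sdes}+\chi^{\bdes}$ together with $\E_{\S_n}[\chi_1^{\inv}]=\tfrac12$ (no ties occur in a permutation, so $w_1>w_2$ with probability $\tfrac12$), which gives
\[
\E_{\S_n}[\chi_1^{\bdes}]=\E_{\S_n}[\chi_1^{\inv}]-\E_{\S_n}[\chi_1^{\sdes}]=\frac12-\frac1n=\frac{n-2}{2n}.
\]
One can cross-check this directly by counting the $\binom{n-1}{2}(n-2)!$ permutations with $w_1\ge w_2+2$.

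With these two values in hand, the four totals follow by substitution. Part (b) of \Cref{thm:k-transitive-sn-invariant-inv-and-sum} gives $\E_{\S_n}[\sdes]=(n-1)\E_{\S_n}[\chi_1^{\sdes}]$ and $\E_{\S_n}[\bdes]=(n-1)\E_{\S_n}[\chi_1^{\bdes}]$, while part (a) gives $\E_{\S_n}[\sinv]=\binom{n}{2}\E_{\S_n}[\chi_1^{\sdes}]$ and $\E_{\S_n}[\binv]=\binom{n}{2}\E_{\S_n}[\chi_1^{\bdes}]$. Multiplying through by $n!$ yields $(n-1)(n-1)!$, $\binom{n}{2}(n-1)!$, $\tfrac{(n-1)(n-2)(n-1)!}{2}$, and $\tfrac{(n-1)(n-2)n!}{4}$ respectively, matching the four claimed identities. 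As consistency checks, the pointwise identities $\sdes+\bdes=\des$ and $\sinv+\binv=\inv$ (from $\chi^{\inv}=\chi^{\sdes}+\chi^{\bdes}$) force items (1),(3) and items (2),(4) to sum to the descent and inversion totals of \Cref{thm:inversions-and-decents-in-permutations}, and item (2) agrees with $\sum_{\sigma}\des(\sigma)$ via \eqref{eq:sum-sinv-equals-ides}.

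The main obstacle, such as it is, is simply getting the big-descent base expectation $\E_{\S_n}[\chi_1^{\bdes}]$ right: one must not read it off as a naive complement of $\tfrac1n$, but instead split the pair $(w_1,w_2)$ into the three exhaustive cases $w_1>w_2+1$, $w_1=w_2+1$, and $w_1<w_2$. Everything else is a direct invocation of \Cref{thm:k-transitive-sn-invariant-inv-and-sum} followed by routine bookkeeping.
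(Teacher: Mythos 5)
Your proposal is correct and follows essentially the same route as the paper: identify $\sdes,\sinv,\bdes,\binv$ as the $g_\chi,f_\chi$ statistics for the $2$-transitive functions $\chi^{\sdes}$ and $\chi^{\bdes}$, compute the base expectations $\E_{\S_n}[\chi_1^{\sdes}]=\tfrac{1}{n}$ and $\E_{\S_n}[\chi_1^{\bdes}]=\tfrac{1}{2}-\tfrac{1}{n}=\tfrac{n-2}{2n}$ via the decomposition $\chi^{\inv}=\chi^{\sdes}+\chi^{\bdes}$, and then apply \Cref{thm:k-transitive-sn-invariant-inv-and-sum} with $k=2$ and $W=\S_n$, multiplying by $|\S_n|=n!$. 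Your caution about the big-descent expectation is in fact well placed: the paper's \eqref{eq:exp-of-1-sdes} literally reads $\tfrac{1}{n}=1-\E_{\S_n}[\chi_1^{\bdes}]$, where the ``$1$'' should be $\E_{\S_n}[\chi_1^{\inv}]=\tfrac{1}{2}$, and your value $\tfrac{n-2}{2n}$ is the one consistent with items (3) and (4).
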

Items (1) and (3) appear to be well-known; see \seqnum{A001563} and \seqnum{A001804}, respectively.

For parking functions, the statistic $\sinv$ counts the number of pairs of cars with adjacent preferences $i$ and $i+1$ so that the second car has the smaller preference. Similarly, $\sdes$ counts the number of pairs of adjacent cars, car $i$ and car $i+1$, with adjacent preferences $j$ and $j-1$, respectively. 

The Pollak map $[n+1]^n\to \PF_n$ (discussed in \Cref{prop:parktie}) sends $w=(a_1,\dots, a_n)$ to a parking function $\alpha$ of the form $(a_1-i,\dots, a_n-i)$ for some $i$, where the subtraction is computed modulo $n+1$.  If $\chi_1^{\sdes}(\alpha)=1$, then $a_1-i=a_2-i+1\pmod{n+1}$, so we must have that $a_1-a_2=1 \pmod{n+1}$  On the other hand, If $w_1=w_{2}+1$ or $w_1=1$ and $w_2=n$, then $i\not\in \{w_1,w_2\}$ since both $w_1$ and $w_2$ must be occupied, so $\chi_1^{\sdes}(\alpha)=1$. Hence, $\alpha$ has $\chi_1^{\sdes}(\alpha)=1$ if and only if $a_1-a_2=1 \pmod{n+1}$. For $w=(a_1,a_2,\dots,a_n)\in W=[n+1]^n,$  the probability $a_1-a_2=1\pmod{n+1}$ is $\frac{1}{n+1}$ since the first letter can be freely chosen and the second letter is uniquely determined by the first, so
\begin{equation}\label{eq:exp-of-1-sdes-pf}\E_{\PF_n}[\chi_1^{\sdes}]=\frac{1}{n+1}.\end{equation} 
  Thus, combining \Cref{eq:exp-of-1-sdes-pf} and \Cref{thm:k-transitive-sn-invariant-inv-and-sum}, we have the following.
\begin{proposition}\label{prop:sdes_sinv_bdes_binvPF}
    Let $n\geq 1$.
\begin{enumerate}
    \item $\sum_{w\in \PF_n} \sdes(w) = (n-1)(n+1)^{n-2}$ .
    \item $\sum_{w\in \PF_n} \sinv(w) = \binom{n}{2}(n+1)^{n-2}$.
    \item $\sum_{w\in \PF_n}\bdes(w)=\binom{n-1}{2}(n+1)^{n-2}$.
    \item $\sum_{w\in \PF_n}\binv(w) = \frac{n(n-1)(n-2)(n+1)^{n-2}}{4}$.
\end{enumerate}
\end{proposition}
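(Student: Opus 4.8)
The plan is to recognize all four statistics as instances of the $f_\chi$/$g_\chi$ machinery and reduce each total to a single-position expectation via \Cref{thm:k-transitive-sn-invariant-inv-and-sum}. Since $\sdes=g_{\chi^{\sdes}}$, $\sinv=f_{\chi^{\sdes}}$, $\bdes=g_{\chi^{\bdes}}$, and $\binv=f_{\chi^{\bdes}}$ are all built from $2$-transitive functions, and $\PF_n$ is $\S_n$-invariant, I would apply parts (a) and (b) of \Cref{thm:k-transitive-sn-invariant-inv-and-sum} with $k=2$. Writing each sum as $\sum_{w\in\PF_n}\phi(w)=|\PF_n|\,\E_{\PF_n}[\phi]$ and using $|\PF_n|=(n+1)^{n-1}$, the entire problem collapses to computing the two single-position expectations $\E_{\PF_n}[\chi_1^{\sdes}]$ and $\E_{\PF_n}[\chi_1^{\bdes}]$.

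For items (1) and (2) this is immediate: \Cref{eq:exp-of-1-sdes-pf} already gives $\E_{\PF_n}[\chi_1^{\sdes}]=\frac{1}{n+1}$. Part (b), with $n-k+1=n-1$, yields $\E_{\PF_n}[\sdes]=\frac{n-1}{n+1}$, so multiplying by $(n+1)^{n-1}$ gives $(n-1)(n+1)^{n-2}$; part (a), with factor $\binom{n}{2}$, gives $\E_{\PF_n}[\sinv]=\frac{1}{n+1}\binom{n}{2}$, and multiplying by $(n+1)^{n-1}$ gives $\binom{n}{2}(n+1)^{n-2}$.

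The work for (3) and (4) is to obtain $\E_{\PF_n}[\chi_1^{\bdes}]$. Here I would use the decomposition $\chi^{\inv}=\chi^{\sdes}+\chi^{\bdes}$ noted in the text, which restricts to $\chi_1^{\inv}=\chi_1^{\sdes}+\chi_1^{\bdes}$, so that $\E_{\PF_n}[\chi_1^{\bdes}]=\E_{\PF_n}[\chi_1^{\inv}]-\frac{1}{n+1}$. The quantity $\E_{\PF_n}[\chi_1^{\inv}]=\P_{\PF_n}(\alpha_1>\alpha_2)$ I would compute exactly as in the proof of \Cref{thm:total-num-descent-in-cayley-permutations}: by the $\S_n$-invariance of $\PF_n$, swapping positions $1$ and $2$ shows $\P_{\PF_n}(\alpha_1>\alpha_2)=\P_{\PF_n}(\alpha_1<\alpha_2)=\frac12\bigl(1-\P_{\PF_n}(\alpha_1=\alpha_2)\bigr)$, and \Cref{prop:parktie} gives $\P_{\PF_n}(\alpha_1=\alpha_2)=(n+1)^{n-2}/(n+1)^{n-1}=\frac{1}{n+1}$. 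Hence $\E_{\PF_n}[\chi_1^{\inv}]=\frac{n}{2(n+1)}$ and $\E_{\PF_n}[\chi_1^{\bdes}]=\frac{n-2}{2(n+1)}$. Feeding this into parts (b) and (a) then yields the $\bdes$-total $=(n+1)^{n-1}(n-1)\frac{n-2}{2(n+1)}=\binom{n-1}{2}(n+1)^{n-2}$ and the $\binv$-total $=(n+1)^{n-1}\binom{n}{2}\frac{n-2}{2(n+1)}=\frac{n(n-1)(n-2)}{4}(n+1)^{n-2}$.

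The only nonroutine step, and the one I expect to be the main obstacle, is pinning down $\E_{\PF_n}[\chi_1^{\bdes}]$: unlike $\chi_1^{\sdes}$, there is no direct Pollak-map argument for a big descent in position $1$, so one is forced to route through $\chi_1^{\inv}$ and thereby invoke both the symmetry of $\PF_n$ under the transposition $(1\,2)$ and the exact tie count of \Cref{prop:parktie}. Everything else is bookkeeping with $\binom{n}{2}$, $n-1$, and powers of $n+1$.
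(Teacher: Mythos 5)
Your proposal is correct and follows essentially the same route as the paper: both reduce all four totals to first-position expectations via \Cref{thm:k-transitive-sn-invariant-inv-and-sum} with $k=2$, using the Pollak-map computation \eqref{eq:exp-of-1-sdes-pf} for $\E_{\PF_n}[\chi_1^{\sdes}]$ and the decomposition $\chi^{\inv}=\chi^{\sdes}+\chi^{\bdes}$ to handle the big-descent items. Your explicit derivation of $\E_{\PF_n}[\chi_1^{\inv}]=\frac{n}{2(n+1)}$ from \Cref{prop:parktie} together with the symmetry $\P_{\PF_n}(\alpha_1>\alpha_2)=\P_{\PF_n}(\alpha_1<\alpha_2)$ fills in a step the paper leaves implicit (the paper could equally have quoted its own inversion total, \Cref{thm:total-number-of-inversions-in-parking-functions}), and your arithmetic for all four formulas checks out.
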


\subsubsection{Permutation patterns} We finish our probabilistic discussion with a closer look 
at when $W=\S_n$. The archetypal example of $k$-transitive function  on $\S_n$ is a test for the existence of a ``pattern'' of length $k$ in a permutation. 
For instance, a $132$-pattern of a permutation $w$ is any triple of increasing indices $(i_1,i_2,i_3)$ such that $w_{i_1}<w_{i_3}<w_{i_2}$ and $w_{i_1}<w_{i_2}$. 
In this light, an inversion is a $21$-pattern. 
More generally, for fixed $k$, we can define a \textbf{pattern} with a permutation $\rho\in \S_k$, namely, a function $\chi^\rho:\N^n\times I_{n,k}$ by setting
     \[\chi^\rho(w,(i_1,i_2,\dots,i_k))=\begin{cases}
         1&\text{if }w_{i_{\rho^{-1}(1)}}< w_{i_{\rho^{-1}(2)}}<\cdots< w_{i_{\rho^{-1}(k)}}\\
         0&\text{otherwise}.
     \end{cases}\]
    If $\chi^\rho(w,\textbf{p})=1$, we say that the pattern $\rho$ \textbf{occurs} in $w$ in the positions indicated by the entries in $\textbf{p}$. 
    Then $f_{\chi^\rho}(w)$ is the number of occurrences of the pattern $\rho$ in $w$ and $g_{\chi^\rho}(w)$ is the number of occurrences of $\rho$ in adjacent positions. 
    Hence, by \Cref{thm:k-transitive-sn-invariant-inv-and-sum}, we have
     \begin{equation}
         \sum_{w\in \S_n} f_{\chi^\rho}(w)=\frac{1}{k}\binom{n}{k-1}\sum_{w\in \S_n} g_{\chi^\rho}(w).\label{eq:pattern-inv-des}
     \end{equation}
 Here are two simple observations about any pattern $\rho\in \S_k$: First,
 \begin{align}
     \label{eq:k-equidistribution}\chi^{\rho}(w,\textbf{p})&=\chi^{id}(u,\textbf{p}),\end{align}
     where $u$ is the permutation obtained by rearranging letters of $w$ in the positions  $\textbf{p}$ in increasing order. Second,
     \begin{align}
     \sum_{w\in \S_n} f_{\chi^\rho}(w)&=n!\binom{n}{k}\frac{1}{k!}\eqpd\label{eq:total-num-patterns}
 \end{align}
    For this, we can see that $\P_{\S_n}(\chi^{\rho}_1=1)=\frac{1}{k!}$ since there is one way to arrange the first $k$ letters to form the pattern $\rho$. 
    Since there are $n!$ permutations, the claim follows from \Cref{thm:k-transitive-sn-invariant-inv-and-sum} part (a). 
    Combining \Cref{eq:pattern-inv-des} and \Cref{eq:total-num-patterns}, we have 
    \begin{equation}\label{eq:total-num-adj-patterns}
        \sum_{w\in \S_n}g_{\chi^\rho}(w)=\frac{(n-k+1)n!}{k!}=(n-k+1)!\binom{n}{k},
    \end{equation}
    which can also be directly seen from \Cref{thm:k-transitive-sn-invariant-inv-and-sum} part (b).

    Some classical permutation statistics can be written as sums of pattern enumerators. Let $R\subseteq \S_k$. 
    One can see that 
    \begin{equation}
    \chi^{R}\coloneqq\sum_{\rho\in R} \chi^{\rho},
    \end{equation}
    is $k$-transitive, since a permutation $w$ can only have a single pattern occur on a given $k$-tuple of positions. Hence, from \Cref{eq:k-equidistribution},  \Cref{eq:total-num-patterns}, and  \Cref{eq:total-num-adj-patterns}, we have that
   \begin{equation}
        \E[f_{\chi^R}]=|R|\cdot \E[f_{\chi^{id}}]=|R|\cdot n!\binom{n}{k}\frac{1}{k!}\quad\text{and}\quad  \E[g_{\chi^R}]=|R|\cdot\E[g_{\chi^{id}}]=|R|\cdot\frac{(n-k+1)n!}{k!}.
   \end{equation}
   \begin{example}
   The classical \textbf{peak} number of a permutation $w$ is defined as 
 \[\operatorname{pk}(w)=|\{i\in [n-2]\mid w_i<w_{i+1}>w_{i+2}\}|.\]
Peaks of permutations are a classic topic in combinatorics and have been studied by a great many authors. See for example \cite{peak1,peak2,peak3,peak4,peaks10,peaks11,peaks5,peaks6,peaks7,peaks8,peaks9}.
 We define the \textbf{horizon} number of a permutation $w$ to be
 \[\operatorname{hz}(w)=\#\{1\leq i_1<i_2<i_3\leq n\mid w_{i_1}<w_{i_2}>w_{i_3}\}.\]
 Note the horizon number of a permutation is simply the number of $132$ and $231$ patterns. 
 Namely, if $R=\{132,231\}$, then  
 $\operatorname{hz}=f_{\chi^R}$ and $\operatorname{pk}=g_{\chi^R}$. Hence,
 \begin{equation}\label{eq:ttl-num-of-peaks}
 \sum_{w\in \S_n}\operatorname{pk}(w)=2\sum_{w\in \S_n}f_{\chi^{132}}=\frac{(n-2)n!}{3}
 \end{equation}
and 
 \begin{equation}
 \sum_{w\in \S_n}\operatorname{hz}(w)=\frac{1}{9}\binom{n}{2}(n-2)n!=\frac{1}{3}\binom{n}{3}n!.
 \end{equation}
\end{example}
\begin{remark}

Comparing \Cref{eq:ttl-num-of-wdes-winvs} and \Cref{eq:ttl-num-of-peaks} reveals that
    \begin{equation}
        \sum_{w\in \S_n}\dtop(w)=\sum_{w\in \S_{n+1}}\operatorname{pk}(w).
    \end{equation}
   We leave finding a combinatorial proof as an open problem. See \Cref{sec:future-work}.
\end{remark}

\subsection{Generating functions}
Given a family $\{W_n\}_{n\geq 1}$ of words so that $W_n\subseteq \Z_{>0}^n$ and $W_n$ is $\S_n$-invariant for all $n$, and given a $k$-transitive function $\chi$, define the exponential generating functions
\begin{align}\label{eq:prob_gen_fun}
F(z)=\sum_{n\geq 1} \lp \sum_{w\in W_n}f_\chi(w)\rp \frac{z^n}{n!}, \quad G(z)=\sum_{n\geq 1} \lp \sum_{w\in W_n}g_\chi(w)\rp \frac{z^n}{n!}, \quad \text{and} \quad H(z)=\sum_{n\geq 1} \lp \sum_{w\in W_n}\chi_1(w)\rp \frac{z^n}{n!}.
\end{align}
For example, if $\chi=\chi^{\inv}$ and $W_n=\S_n$, then $F(z)$ is the exponential generating function for the total number of inversions, $G(z)$ is that for descents, and $H(z)$ is that for descents in the first position.   
\begin{proposition}\label{prop:exp functions}
With $F(z)$, $G(z)$, and $H(z)$ as in \Cref{eq:prob_gen_fun},
we have the following relations:
\begin{align}
    F(z)&=\frac{z^{k-1}}{k!}\dfrac{d^{k-1}}{dz^{k-1}}[G(z)], \label{eq:F-to-G}\\
     G(z)&=zH'(z)-(k-1)H(z),\label{eq:G-to-H}\\
      \dfrac{d^{k-1}}{dz^{k-1}}\lb G(z)\rb &=z\dfrac{d^k}{dz^k}\lb H(z)\rb,\label{eq:Gdiff-to-Hdiff} \text{and}\\
      F(z) &=\frac{z^k}{k!}\dfrac{d^k}{dz^k}\lb H(z)\rb. \label{eq:F-to-H}
\end{align}
\end{proposition}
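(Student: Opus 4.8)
The plan is to reduce all four identities to coefficient relations supplied by \Cref{thm:k-transitive-sn-invariant-inv-and-sum} and then read them off as routine manipulations of exponential generating functions. Write $a_n=\sum_{w\in W_n}f_\chi(w)$, $b_n=\sum_{w\in W_n}g_\chi(w)$, and $c_n=\sum_{w\in W_n}\chi_1(w)$, so that $F$, $G$, and $H$ are the EGFs of $(a_n)$, $(b_n)$, and $(c_n)$ respectively. Since $\E_{W_n}[\,\cdot\,]=|W_n|^{-1}\sum_{w\in W_n}(\,\cdot\,)$, multiplying each part of \Cref{thm:k-transitive-sn-invariant-inv-and-sum} through by $|W_n|$ turns the three expectation identities into the purely numerical statements
\[
a_n=\binom{n}{k}c_n,\qquad b_n=(n-k+1)c_n,\qquad a_n=\tfrac{1}{k}\binom{n}{k-1}b_n.
\]

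The second ingredient is the elementary action of differential operators on the EGF monomials $z^n/n!$. First I would record that $\frac{d^m}{dz^m}\frac{z^n}{n!}=\frac{z^{n-m}}{(n-m)!}$ for $n\geq m$ and vanishes otherwise, so that the operator $\frac{z^m}{m!}\frac{d^m}{dz^m}$ multiplies the $n$-th EGF coefficient by $\binom{n}{m}$, while $z\frac{d}{dz}$ multiplies it by $n$. With these two facts, each relation is a direct coefficient comparison. Relation \eqref{eq:F-to-H} is exactly the operator form of $a_n=\binom{n}{k}c_n$, and the operator $\frac{z^k}{k!}\frac{d^k}{dz^k}$ automatically annihilates the coefficients with $n<k$, matching $\binom{n}{k}=0$ there. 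Relation \eqref{eq:G-to-H} rewrites $b_n=\bigl(n-(k-1)\bigr)c_n$, using $z\frac{d}{dz}$ for the factor $n$ and $-(k-1)$ for the constant shift. For relation \eqref{eq:Gdiff-to-Hdiff} I would compute both sides coefficientwise: applying $\frac{d^{k-1}}{dz^{k-1}}$ to $G$ sends $b_n\frac{z^n}{n!}$ to $(n-k+1)c_n\frac{z^{n-k+1}}{(n-k+1)!}=c_n\frac{z^{n-k+1}}{(n-k)!}$, while $z\frac{d^k}{dz^k}H$ sends $c_n\frac{z^n}{n!}$ to $c_n\frac{z^{n-k+1}}{(n-k)!}$, so the two agree. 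Finally, relation \eqref{eq:F-to-G} is obtained by multiplying \eqref{eq:Gdiff-to-Hdiff} through by $\frac{z^{k-1}}{k!}$ and invoking \eqref{eq:F-to-H}, since $\frac{z^{k-1}}{k!}\cdot z\frac{d^k}{dz^k}H=\frac{z^k}{k!}\frac{d^k}{dz^k}H=F$.

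There is no genuinely hard step: once the three coefficient identities are extracted from \Cref{thm:k-transitive-sn-invariant-inv-and-sum}, all four relations are formal consequences of how $\frac{d}{dz}$ and multiplication by powers of $z$ act on $z^n/n!$. The only point requiring care is the bookkeeping at the bottom of the summation range—ensuring the $n<k$ terms (respectively the $n=k-1$ term of $G$) vanish on both sides so the series line up exactly—which is handled by the vanishing of $\binom{n}{k}$ for $n<k$ and by the factor $(n-k+1)$ killing the $n=k-1$ term.
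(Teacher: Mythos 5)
Your proposal is correct and takes essentially the same approach as the paper: both reduce all four identities to the coefficient relations supplied by \Cref{thm:k-transitive-sn-invariant-inv-and-sum} and then verify them by routine formal manipulations of exponential generating functions (shifting coefficients via $\tfrac{d}{dz}$ and multiplication by powers of $z$). The only difference is the order of derivation --- you prove \eqref{eq:F-to-H} first from part (a) and obtain \eqref{eq:F-to-G} last, whereas the paper proves \eqref{eq:F-to-G} and \eqref{eq:G-to-H} directly from parts (c) and (b) and then deduces \eqref{eq:Gdiff-to-Hdiff} and \eqref{eq:F-to-H} --- which is a purely cosmetic distinction.
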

\begin{proof} These are routine computations following from \Cref{thm:k-transitive-sn-invariant-inv-and-sum}. To establish \Cref{eq:F-to-G}, we have 
\[F(z)=\sum_{n\geq 1}\frac{1}{k}\binom{n}{k-1}\lp\sum_{w\in W_n}g_{\chi}(w)\rp \frac{z^n}{n!}=\frac{z^{k-1}}{k!}\sum_{n\geq 1}\lp\sum_{w\in W_n}g_{\chi}(w)\rp \frac{z^{n-k+1}}{(n-k+1)!}=\frac{z^{k-1}}{k!}\dfrac{d^{k-1}}{dz^{k-1}}[G(z)].\]
Recall from \Cref{thm:k-transitive-sn-invariant-inv-and-sum}(b) that $\E_W[g_\chi]=(n-k+1)\E_W[\chi_1]=n\E_W[\chi_1]-(k-1)\E_W[\chi_1]$. Hence, we have 
\begin{align*}
G(z)=\sum_{n\geq 1}\lp \sum_{w\in W_n} g_\chi(w)\rp \frac{z^n}{n!}
=\sum_{n\geq 1}\lp \sum_{w\in W_n} \chi_1(w)\rp \frac{z^n}{(n-1)!}-(k-1)\sum_{n\geq 1}\lp \sum_{w\in W_n} \chi_1(w)\rp \frac{z^n}{n!},
\end{align*} 
and therefore \Cref{eq:G-to-H} follows. 

\Cref{eq:Gdiff-to-Hdiff} follows from using  the product rule applied to the \Cref{eq:G-to-H}. Lastly, \Cref{eq:F-to-H} follows from combining \Cref{eq:F-to-G} and \Cref{eq:Gdiff-to-Hdiff}. 
\end{proof}
\Cref{prop:exp functions} allows one to quickly get exponential generating functions for $\sum f_\chi$ and $\sum g_\chi$  given an expression for $H(z)$, which is usually the simplest to compute. 
Note that as far as exponential generating functions are concerned, it does not matter if one computes 
\[
G(z)=\sum_{n\geq 1} \lp \sum_{w\in W_n}g_\chi(w)\rp \frac{z^n}{n!}\quad \text{or }\quad \dfrac{d^{k-1}}{dz^{k-1}}[G(z)]=\sum_{n\geq 1} \lp \sum_{w\in W_n}g_\chi(w)\rp \frac{z^{n-k+1}}{(n-k+1)!}.
\] 
since both ``record'' the examined sequence in essentially the same way.

\begin{example}
    Consider $W_n=\UPF_n$ and $\chi=\chi^{\inv}$. 
    It is slightly more straightforward to compute $G'(z)=zH''(z)$. 
    Recall that
    \[\sum_{n\geq 0}\Fub_n \frac{z^n}{n!}=\frac{1}{2-e^z}.\]
    Combining this and \Cref{eq:prop-of-des-in-pos-1-in-upf} yields
    \begin{align*}
    2H'(z)
    &=2\sum_{n\geq 1} \lp \sum_{w\in \UPF_n}\chi^{\inv}_1(w)\rp \frac{z^{n-1}}{(n-1)!}\\
    &=\sum_{n\geq 1} (\Fub_n-\Fub_{n-1})\frac{z^{n-1}}{(n-1)!}\\
    &=\sum_{n\geq 1} \Fub_n\frac{z^{n-1}}{(n-1)!}-\sum_{n\geq 1} \Fub_{n-1}\frac{z^{n-1}}{(n-1)!}\\
    &=\lp \frac{1}{2-e^z}\rp'-\frac{1}{2-e^z}\\
    &=\frac{ 2( e^z-1)}{(2 - e^z)^2}.
    \end{align*}
    Thus, from \Cref{thm:total-num-descent-in-cayley-permutations},
    \begin{equation}
   G'(z)= \sum_{n\geq 1}\lp \sum_{\alpha\in \UPF_n} \des(w)\rp \frac{z^{n-1}}{(n-1)!}=z\lp \frac{ e^z-1}{(2 - e^z)^2}\rp'=\frac{ze^{2 z}}{(2-e^z)^3},
    \end{equation}
    and from \Cref{prop:exp functions},
    \begin{equation}
    F(z)=\sum_{n\geq 1}\lp \sum_{\alpha\in \UPF_n} \inv(w)\rp \frac{z^{n}}{n!}=\frac{z}{2!}G'(z)=\frac{z^2e^{2 z}}{2(2-e^z)^3}.
    \end{equation}
\end{example}

\begin{example}
    Consider $\chi=\chi^{\tie}$ on $W_n=\C_n$. From \Cref{eq:prop-of-tie-in-pos-1-in-upf} we have
    \[H'(z)=\sum_{n\geq 1} \lp\sum_{\alpha\in \C_n}\chi_1^{\tie}(\alpha)\rp\frac{z^{n-1}}{(n-1)!}= \sum_{n\geq 1}\Fub_{n-1}\frac{z^{n-1}}{(n-1)!}=\frac{1}{2-e^z}.\]
    Hence, 
    \begin{equation}G'(z)=\sum_{n\geq 1} \lp\sum_{\alpha\in \C_n}\tie(\alpha)\rp\frac{z^{n-1}}{(n-1)!}=\sum_{n\geq 1}\Fub_{n-1}\frac{z^{n-1}}{(n-2)!}=z\lp \frac{1}{2-e^z}\rp'=\frac{ze^z}{(2- e^z)^2}.\end{equation}
\end{example}

\section{Future work}\label{sec:future-work}
We  conclude with some directions for further study.
One can study inversions of parking functions for other variations and special subsets of parking functions, including
$k$-Naples parking functions, vacillating parking functions, 
and parking sequences and assortments. For a summary of some of these sets, refer to \cite{bib:Carlson+}. 
Thus, the program of this paper may be applied to them.
\begin{problem}
    For other families of parking functions, find nice expressions for their inversion generating functions. Furthermore, determine if there is a natural $\S_n$-action on the family and determine its Frobenius image.
\end{problem} 
Inversions are just one of many word statistics to consider on parking functions. Hence, one research agenda is as follows:
\begin{problem}
    Investigate generating functions for other word statistics on parking functions and their subsets and generalizations. 
\end{problem} 
Next, we have seen that some $q$-analogues can be explained via representation theoretic formulas. We would like fully representation theoretic explanations of all of the results in this paper. In particular, we pose the following.
\begin{problem}
    Determine the representation theoretic interpretation of \Cref{thm:k-transitive-sn-invariant-inv-and-sum} and of the equality $A_n^{\inv,\asc}(q,2)=\UPF_n(q)$ given in \Cref{ex:alternating_subgroup}.
\end{problem}
Finally, in \Cref{sec:applications-of-linearity-of-expectation}, our techniques have uncovered a few surprising formulas. Since our methods are purely probabilistic and/or algebraic, it would be quite interesting to find more concrete, bijective proofs of our results.
\begin{problem}\label{problem: combinatorial-proofs}
    Find combinatorial proofs of the following equations. For $n\geq 1$:
    \begin{enumerate}
        \item $ \displaystyle\sum_{w\in \S_n}\dtop(w)=\displaystyle\sum_{w\in \S_{n+1}}\operatorname{pk}(w)$.
        \item $\displaystyle\sum_{\alpha\in \PF_n}\des(\alpha)=\displaystyle\sum_{\alpha\in \PF_n}f_{\chi^{\tie}}(\alpha)$.
        \item $\frac{2}{n}\displaystyle\sum_{c\vDash n}\displaystyle\sum_{i=1}^\ell \binom{n}{c_1,c_2,\dots,c_{i-1},c_i-2,2,c_{i+1},\dots,c_\ell}=(n-1)\Fub_{n-1}$.
        \item $\displaystyle\sum_{c\in \mathrm{Hess}(n)}\displaystyle\sum_{i=1}^\ell \binom{n}{c_1,c_2,\dots,c_{i-1},c_i-2,2,c_{i+1},\dots,c_\ell}=\binom{n}{2}(n+1)^{n-2}$.
    \end{enumerate}
    \end{problem}

\section*{Acknowledgments}
The genesis for this project was the 2024 Graduate Research Workshop in Combinatorics, hosted by University of Wisconsin, Milwaukee and funded by the NSF (Grant No. DMS~--~1953445).
We thank the developers of
SageMath~\cite{sage} and CoCalc~\cite{SMC}, which were useful in this research.
We also thank Steve Butler and Mei Yin, for their comments and guidance at the start of this project, and Kimberly J. Harry, for initial contributions. Special thanks also to Ari Cruz, Jeremy L. Martin, Matt McClinton, and Keith Sullivan for helpful conversations at the initial stages of this collaboration. We thank Heesung Shin for a helpful correspondence regarding the bijection of Fran\c{c}on \cite{francon_acyclic_1975}, and Vasu Tewari for their helpful comments about \Cref{lem:character-of-words-inversions}. 

\bibliographystyle{plain}
\bibliography{bibliography.bib}
\end{document}